\documentclass[11pt]{article}

\usepackage[english]{babel}
\usepackage[utf8]{inputenc}
\usepackage[T1]{fontenc}
\usepackage{geometry}
\usepackage{lmodern}
\usepackage{microtype}
\usepackage{amsmath,amssymb,amsthm,mathtools}
\usepackage{bm}
\usepackage{bbm}
\usepackage{enumitem}
\usepackage[colorlinks=true,linkcolor=blue,citecolor=blue,urlcolor=blue]{hyperref}
\usepackage{algorithm}
\usepackage{algorithmicx}  
\usepackage{algpseudocode}
\usepackage{float}
\usepackage{natbib}
\usepackage{csquotes}
\usepackage[dvipsnames]{xcolor}

\numberwithin{equation}{section}

\theoremstyle{plain}
\newtheorem{theorem}{Theorem}[section]
\newtheorem{proposition}[theorem]{Proposition}
\newtheorem{lemma}[theorem]{Lemma}

\newtheorem{corollary}[theorem]{Corollary}

\theoremstyle{definition}
\newtheorem{definition}[theorem]{Definition}
\newtheorem{assumption}[theorem]{Assumption}

\theoremstyle{remark}
\newtheorem{remark}[theorem]{Remark}

\def\cA{{\mathcal A}}

\def\cC{{\mathcal C}}

\def\cE{{\mathcal E}}

\def\cG{{\mathcal G}}
\def\cH{{\mathcal H}}

\def\cP{{\mathcal P}}
\def\cR{{\mathcal R}}

\def\cU{{\mathcal U}}

\def\cT{{\mathcal T}}
\def\cW{{\mathcal W}}
\def\cX{{\mathcal X}}
\def\cY{{\mathcal Y}}

\def\E{\mathbb{E}}

\def\N{\mathbb{N}}

\def\R{\mathbb{R}}

\def\fE{\mathfrak{E}}

\def\eps{\epsilon}

\def\d{\mathrm{d}}

\newcommand{\mc}{\mathcal}

\usepackage{fancyhdr}
\title{Implicit Regularization of Large Neural Networks via\\ Mean-Field Formulation}
\author
{Beatrice Acciaio\footnote{ Department of Mathematics, ETH Z\"urich, Z\"urich, Switzerland, email: 
{\tt beatrice.acciaio@math.ethz.ch}.}
\and Jakob Heiss \footnote{Statistics Department, UC Berkeley, Berkeley, USA, email: 
{\tt jakob.heiss@berkeley.edu}.}
\and Gudmund Pammer\footnote{Institute for Statistics, TU Graz, Styria, Austria, email: 
{\tt gudmund.pammer@tugraz.at}.}
\and Qinxin Yan\footnote{Program in Applied and Computational
Mathematics, Princeton University, Princeton, USA, email: 
{\tt qy3953@princeton.edu}. }}

\date{\today}

\begin{document}
\maketitle
\begin{abstract} 
\noindent 
We propose a mathematical framework to explain implicit regularization from early stopping during the training of overparametrized neural networks. In the mean-field limit, the parameter distribution evolves according to a gradient flow on the space of probability measures. We show that these dynamics admit an equivalent McKean–Vlasov stochastic control formulation through the corresponding Hamilton–Jacobi–Bellman (HJB) equation. The control viewpoint yields a Dynamic Programming Principle (DPP), which we use to define a new metric on probability measures. This metric can be viewed as a mean-field generalization of the control representation of the Wasserstein-2 distance, and it naturally appears as a regularization term selected by early stopping. We further obtain non-asymptotic bounds describing how the induced regularization depends on the stopping time.
\end{abstract}
\vspace{3pt}

\noindent\textbf{Key words:} McKean--Vlasov Control, Implicit Regularization, Gradient Flow, Machine Learning
\vspace{3pt}

\section{Introduction}\label{sec:1}
An important empirical feature of modern deep learning is that highly overparametrized models can fit the
training data extremely well, often achieving (near) interpolation, and yet still generalize
surprisingly well on unseen data 
\citep{ZhangBengioHardtRechtVinyals2017rethinking,BelkinHsuMaMandal2019doubledescent}.
From a traditional statistical viewpoint, this behavior is counterintuitive: increasing the capacity of a
hypothesis class is usually expected to worsen generalization unless one imposes explicit regularization.
Understanding why and when large neural networks avoid classical overfitting has therefore become a
central question in machine learning theory.

A broad set of explanations has emerged. Some lines of work emphasize the role of the data distribution
and the emergence of useful internal representations \citep{BengioCourvilleVincent2013representation}, others connect
generalization to properties of the optimization dynamics, such as algorithmic stability (and its link
to training time) \citep{HardtRechtSinger2016} or margin/Lipschitz-type complexity measures
\citep{NIPS2017_b22b257a}.
A particularly influential viewpoint is \emph{implicit regularization} (or implicit bias): even when the
training objective contains no explicit regularization like Lasso or Ridge, the optimization algorithm can preferentially select
certain \enquote{simpler} solutions among the many global minimizers, thereby controlling effective complexity
\citep{Neyshabur2014InSO,SoudryImplicitBiasSeparableData2017arXiv171010345S,pmlr-v80-gunasekar18a}.
In this perspective, hyperparameters and training protocols, including noise injection, step sizes, and
especially the stopping time, act as regularization mechanisms. Early stopping is a canonical example:
terminating the optimization trajectory at a finite time can select solutions with improved stability
or generalization properties even in highly nonconvex regimes.

In this paper we propose a mathematical framework for implicit regularization and early stopping based
on a mean-field description of wide neural networks. In the mean-field (infinite-width) regime, a
two-layer network can be viewed as a function on a probability measure space \citep{MeiMFNN, chizatbach,SirignanoNN,RotskoffNN}. Training by gradient-based methods then induces an evolution of probability measures
$(\mu_t)_{t\ge0}$ on the parameter space. This evolution can be interpreted as a gradient flow on the probability measure space  endowed with the Wasserstein-2 metric. This links optimization dynamics to the geometry of optimal
transport, variational principles, and to partial differential equations.

Our main contribution is to connect this gradient-flow viewpoint with a McKean--Vlasov optimal control
formulation and its Hamilton--Jacobi--Bellman (HJB) equation  on probability measure space. This bridge allows us to
interpret training trajectories---and in particular early-stopped trajectories---as solutions selected
by a variational principle that trades off loss decrease against a kinetic (control) cost. We develop
both a deterministic regime and a diffusive regime,
derive endpoint variational representations consistent with dynamic programming, and use these tools
to formalize early stopping as an implicit regularizer in the mean-field setting. 

\paragraph{Our contributions.}
At a high level, we establish a two-way bridge between Wasserstein gradient flows and McKean--Vlasov
control for training dynamics on probability measure space, and we use it to quantify and interpret early-stopping
bias. Concretely:
\begin{itemize}
    \item We formulate deterministic and diffusive mean-field training dynamics as minimizing curves of McKean--Vlasov control
    problems and characterize their value functions via stationary HJB equations on probability measure space.
    \item We derive endpoint (finite-horizon) variational formulations for the training dynamics that
    are compatible with dynamic programming. These yield a principled description of early stopping as
    a trajectory-selection mechanism.
    \item We develop a geometric structure through continuity equations, minimal velocities, and a
    Finsler-type action, clarifying how the selected trajectory balances loss dissipation and kinetic
    energy, and how this balance changes between deterministic and diffusive regimes.
    \item We apply the framework to the mean-field model of one-hidden-layer neural networks, establish
    moment and stability bounds 
    and illustrate the resulting implicit-regularization behavior numerically.
    \item We further use our framework to provide an alternative solution for this open question:
    \begin{quote}
        \textit{Does implicit regularization vanish under infinite training time horizon?}
    \end{quote}
\end{itemize}

\paragraph{Paper organization.}
Section~\ref{sec:2} recalls some preliminaries on gradient flows on metric spaces, and introduces the McKean--Vlasov control formulation.
Section~\ref{sec:3} develops endpoint representations and the early-stopping variational
principles. Section~\ref{sec:4}  specializes the general results to one-hidden-layer
networks and derives the main bounds and interpretations. Numerical experiments are presented in
Section~\ref{sec:numerics}.

\subsection{Related Literature}\label{sec:literature}

\citet{bishop1995regularizationEarlyStopping,friedman2003gradient,pmlr-v89-ali19a,heiss2019implicit1,Stark_Steinberger_2025,Wu_Bartlett_Lee_Kakade_Yu_2025} study implicit regularization for linear regression (including random feature models), by connecting the (early-stopped) training dynamics to the $L_2$-regularization of the parameters, but their theory does not apply to deep neural networks with trainable hidden layers.
\citet{GidelImplicitDiscreteRegularizationDeepLinearNN2019arXiv190413262G} analyze implicit regularization for deep linear neural networks, where all the activation functions are linear.

In contrast, we study the gradient dynamics and the implicit regularization of early-stopping for a two-layer\footnote{In our notation, a two-layer neural network has one hidden layer (with non-linear activation function). For non-polynomial activation functions this architecture can approximate any continuous function on any compact set with sufficiently many neurons \citep{CybenkoUniversalApprox1989,HornikUniversalApprox1991251}.} neural network with non-linear activation function in the mean-field limit. The mean-field limit corresponds to the two-layer special case of  \citep{muPpmlr-v139-yang21c} which is in the \enquote{feature learning} regime in contrast to the \enquote{lazy learning} regimes such as NTK \citep{jacot2020neuraltangentkernelconvergence}.
In particular, our analysis allows us to bound the $L^2$-norm of the parameters obtained from early-stopped gradient flow. Such a bound is particularly useful for ReLU neural networks, since multiple works in the literature translate the $L^2$-norm of the parameters into properties of the learned function \citep{savarese2019infinite,ongie2019function,heiss2021reducing,HeissPart3Arxiv,parhi2022kinds,jacot2022feature,Boursier_Flammarion_2023}, which can provide more fine-grained understanding of the inductive bias towards feature learning \citep{HeissPart3Arxiv,jacot2022feature,HeissInductiveBias2024,MultiOutputSimilarToHeiss2021JMLR:v25:23-0677,Parkinson_Ongie_Willett_2025}. Therefore, we think that there is a potential for future research to better understand inductive bias towards feature learning for early stopping. However, within this work, we do not further discuss the connections to feature learning.
While we highlight this potential connection to feature learning for future work, the primary focus of this work remains on the variational characterization of the training dynamics itself.

\citet{williams2019gradient} analyze the gradient dynamics of univariate ReLU networks, demonstrating how optimization adaptively aligns activation thresholds (knots) with the data structure. From a margin-maximization perspective, \citet{PoggioGernalizationDeepNN2018arXiv180611379P} argue that gradient descent implicitly regularizes deep networks by maximizing the normalized margin, akin to support vector machines.
\cite{paik2025basicinequalitiesfirstorderoptimization} provide more general bounds for the implicit regularization of general optimization problems without exploiting the structure of neural networks.

Our analysis relies on the intersection of gradient flows in metric spaces, McKean--Vlasov control, and viscosity solutions for Hamilton-Jacobi-Bellman (HJB) equations on the Wasserstein space. The interpretation of the training dynamics of infinite-width neural networks as a gradient flow on the space of probability measures is popularized by \citep{chizatbach,MeiMFNN,SirignanoNN,RotskoffNN}. Rigorous foundations for such flows are established in the theory of curves of maximal slope in metric spaces, comprehensively developed by \cite{ambrosioGradientFlowsMetric2008}.

To characterize the implicit regularization effect of early stopping, we reformulate the gradient flow via a McKean--Vlasov control problem, where the state dynamics (the distribution of parameters) depend on the distribution itself. A similar idea can be found in~\cite{RossiSavareMielke2008}, where they prove the existence of curves of maximal slope on general metric space using control formulations and corresponding Hamilton-Jacobi equations on general metric spaces. On probability measure space, the control problem is referred to as McKean--Vlasov control (equivalently Mean-field control), and the general theory for McKean--Vlasov control problems is detailed in \cite{Carmona2018Probabilistic}. A central tool in our analysis is the Dynamic Programming Principle (DPP), which allows us to decompose the infinite-horizon problem into finite-time segments. While classical DPP results are standard, the extension to McKean--Vlasov dynamics, where the conditioning on the law, induces non-trivial technicalities. We rely specifically on the results of \cite{DylanDPP, PossamaControl}, who prove the DPP for open-loop controls and justify the restriction to feedback controls in the weak formulation.

The value function of the McKean--Vlasov control problem is characterized as the solution to a Hamilton-Jacobi-Bellman (HJB) equation on the Wasserstein space. Due to the potential lack of smoothness of the value function and the non-compactness of the space, classical smooth solutions rarely exist. We therefore adopt the notion of viscosity solutions on metric spaces. In the noisy training regime, we refer to the work of \citep{SonerYan,Cosso2022MasterBE}, who establish existence and uniqueness comparison principles for McKean--Vlasov control problems with diffusion. In the deterministic case, the well-posedness and uniqueness of viscosity solutions in this setting are guaranteed by the comparison principles established by \citep{MetricvissolGanbo,AmbFen14}. These works ensure that the value function of the McKean--Vlasov control problem is the unique viscosity solution of the corresponding HJB equation, even when the gradient flows are non-unique due to the lack of displacement convexity or sufficient regularity assumptions.

\subsection{Notations}
Let $E\subseteq \R^d$  denote the underlying space, and $\mathcal P(E)$ the set of probability measures on it. Let $\mathcal P_2(E)$ be the set of probability measures with finite second moment, endowed with the Wasserstein-2 metric $\cW_2$. Recall that $(\cP_2(E), \cW_2)$ is a complete metric space. 

For two metric spaces $\cX, \cY$, let $\cC(\cX;\cY)$ denote the set of continuous functions from $\cX$ to $\cY$ with respect to their own topology. We denote the set of continuous functions on $\cP_2(E)$ by $\cC(\cP_2(E);\R)$, and write $\cC(\cP_2(E))$ when there is no ambiguity. We will use bold symbols to denote continuous curves in $\cP_2(E)$, e.g. $\boldsymbol{\mu}\in \cC(\R_+;\cP_2(E))$. For $i\in \N_+$, let $\cC^i(E;\R)$ denote the set of functions with $i$-times continuous derivative.  We will use the linear derivative on $\mathcal P(E)$ defined as follows.
\begin{definition}
    A function $\phi\in\cC(\cP(E);\R)$  is continuously differentiable if there exists a function $\partial_\mu \phi\in \cC(\cP_2(E);\cC(E))$  satisfying
        $$   \phi(\nu)=\phi(\mu)+\int_0^1\int_E\partial_\mu\phi(\mu+\tau(\nu-\mu))(x)(\nu-\mu)(\d x)\d \tau, \quad \forall \mu,\nu\in \cP(E),
        $$
        where $\partial_\mu\phi(\mu)$ is called  the \emph{linear derivative} of $\phi$ in the $\mu-$variable evaluated at $\mu$. For $i=1,2$, set $\cC^i(\cP(E)):=\{\phi\in \cC(\cP(E)):\partial_\mu \phi\in \cC(\cP(E);\cC^i(E;\R))\}$. If it exists, $\nabla_W \phi:=\nabla_x\partial_\mu \phi:\cP(E)\rightarrow\cC(E;\R^d)$ is called the \emph{Lions derivative}.
\end{definition}

\section{Problem Setup}\label{sec:2}
\subsection{Preliminaries}
We first recall some definitions and results on gradient flows in Wasserstein space from~\citep{ambrosioGradientFlowsMetric2008}. Consider the complete metric space $(\mathcal P_2(E), \cW_2)$. Let $I\subset \R_+$ be an interval, which can be either finite or infinite.
 For $p\in [1,\infty]$, a curve $\boldsymbol{\mu}:=(\mu_t)_{t\in I}$ is said to belong to $ AC^p(I; \mathcal P_2(E))$ if there exists $\boldsymbol{m}= (m_t)_{t\in I}\in L^p(I; \R)$ such that 
\begin{equation}
\label{eq: absolute continuity}
    \cW_2(\mu_s,\mu_t)\leq \int_s^t m_u\, \mathrm{d} u,\qquad \text{for all }\, s,\,t\in I,\, s\le t,
\end{equation}
and $AC_{loc}^p(I;\cP_2(E))$ denotes the set of curves that belong to $AC^p(I_0;\cP_2(E))$ for every compact interval $I_0\subseteq I$. When there is no ambiguity on the target space, we also write $AC^p(I)$. For $p = 1$, $AC^1(I;\cP_2(E))$ is the space of absolutely continuous curves, and we 
denote the corresponding space simply by $AC(I;\cP_2(E))$.

The following results can be found in \citep{ambrosioGradientFlowsMetric2008}.

\begin{proposition}[Metric derivative]
    For all $\boldsymbol{\mu}\in AC^2(I)$, the limit 

    \begin{equation*}
        |\mu'|(t):=\lim_{s\rightarrow t} \frac{\cW_2(\mu_s,\mu_t)}{|s-t|}
    \end{equation*}
    exists for a.e. $t\in I$. Moreover, the map $t\mapsto|\mu'|(t)$ belongs to $L^2(I)$.
\end{proposition}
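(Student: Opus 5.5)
The plan is to follow the classical argument of Ambrosio--Gigli--Savaré, which works in any complete metric space and uses separability of $(\cP_2(E),\cW_2)$ to reduce to real-valued absolutely continuous functions. Fix a countable dense set $\{\nu_n\}_{n\in\N}\subset\cP_2(E)$. For instance, take finitely supported measures with rational weights and rational atoms, restricted to $E$ or projected appropriately.

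Set $f_n(t):=\cW_2(\mu_t,\nu_n)$. By the triangle inequality and \eqref{eq: absolute continuity}, $|f_n(s)-f_n(t)|\le \cW_2(\mu_s,\mu_t)\le\int_s^t m_u\,\d u$. Hence each $f_n$ is absolutely continuous, is differentiable a.e., and satisfies $|f_n'(t)|\le m_t$ at every Lebesgue point of $m$. Define $\bar m(t):=\sup_n|f_n'(t)|$, which is well defined outside a null set because the union of countably many null sets is null. Then $\bar m\le m$ a.e., so $\bar m\in L^2(I)$ because $m\in L^2(I)$.

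The key identity is $\cW_2(\mu_s,\mu_t)=\sup_n|f_n(s)-f_n(t)|$. This holds because, by density, we can choose $\nu_n\to\mu_s$, and then $|f_n(s)-f_n(t)|\to\cW_2(\mu_s,\mu_t)$. From this identity:
\begin{itemize}
\item First, $\cW_2(\mu_s,\mu_t)\le\sup_n\int_s^t|f_n'|\le\int_s^t\bar m$, so $\bar m$ is itself an admissible $m$ in \eqref{eq: absolute continuity}.
\item Second, at a.e. $t$ we get $\liminf_{s\to t}\cW_2(\mu_s,\mu_t)/|s-t|\ge\sup_n|f_n'(t)|=\bar m(t)$. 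This is because each $f_n$ is differentiable at $t$ and each difference quotient of $f_n$ is bounded by the $\cW_2$ quotient.
\item Third, combining the first point with Lebesgue's differentiation theorem gives, at every Lebesgue point of $\bar m$, $\limsup_{s\to t}\cW_2(\mu_s,\mu_t)/|s-t|\le\lim_{s\to t}\frac1{|s-t|}\left|\int_s^t\bar m\right|=\bar m(t)$.
\end{itemize}
So the limit exists and equals $\bar m(t)$ for a.e. $t$, which gives $|\mu'|=\bar m\in L^2(I)$. As a byproduct, $|\mu'|$ is the minimal $m$ in \eqref{eq: absolute continuity}.

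The only delicate point is the handling of null sets. We need a single full-measure set on which all $f_n$ are differentiable simultaneously and on which $t$ is a Lebesgue point of both $m$ and $\bar m$. Countability of the dense family makes this possible, and measurability of $\bar m$ follows because it is a countable supremum of measurable functions. For an unbounded interval $I$ the argument is run on compact subintervals and the pieces are patched together. This is routine, since the bound $\bar m\le m$ holds globally.
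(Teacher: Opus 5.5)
Your argument is correct and is the standard proof of this fact (Theorem 1.1.2 in \citep{ambrosioGradientFlowsMetric2008}); the paper gives no proof of its own and simply defers to that reference. You reproduce its steps faithfully: distance functions to a countable dense family, $\bar m:=\sup_n|f_n'|\le m$, and matching $\liminf$/$\limsup$ bounds on the difference quotient at common differentiability and Lebesgue points. One small simplification: you only need density in the image $\{\mu_t: t\in I\}$, which is separable as a continuous image of $I$, so you do not need an explicit dense subset of $\mathcal{P}_2(E)$.
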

The map $t\mapsto |\mu'|(t)$ is called the metric derivative of $\boldsymbol{\mu}$. The following result from \citep{ambrosioGradientFlowsMetric2008} shows that it is minimal within the class of functions $m\in L^2([0,T];\R)$ satisfying~\eqref{eq: absolute continuity}.

\begin{proposition}
\label{prop: minimal velocity}
    Let $\boldsymbol{\mu}$ be a weakly continuous curve. 
    Then $\boldsymbol{\mu}$ belongs to $AC^2(I)$
    if and only if there exists a Borel map $\boldsymbol{v}:(y,t)\rightarrow v_t(y)\in \R^d$ such that $t\rightarrow \|v_t\|_{\mu_t}\in L^2(I)$ and the continuity equation
    \begin{equation*}
        \partial_t \mu_t +\nabla\cdot (\mu_t v_t)=0\quad \text{in } \R^d\times I
    \end{equation*}
    holds in the sense of distributions. 
    The map $\boldsymbol{v}$ is referred to as a velocity of the curve $\boldsymbol{\mu}$.
    There exists a unique velocity $\boldsymbol{v}^*$ with minimal $L^2$-norm, and it satisfies $\|v_t\|_{\mu_t}=|\mu'|(t)$ for a.e.\ $t\in I$.
\end{proposition}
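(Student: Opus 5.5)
This is the classical characterization of $AC^2$ curves in $(\cP_2(E),\cW_2)$ (Theorem 8.3.1 in \citep{ambrosioGradientFlowsMetric2008}). I would prove the two implications separately and then derive the minimal velocity from the construction in the forward direction.

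\emph{Sufficiency (velocity $\Rightarrow$ $AC^2$).} Assume $\boldsymbol v$ satisfies the continuity equation with $\|v_t\|_{\mu_t}\in L^2(I)$. First I would regularize: convolve $\mu_t$ and the flux $\mu_t v_t$ with a Gaussian mollifier $\rho_\eps$, and set
\[
\mu^\eps_t=\mu_t*\rho_\eps,\qquad v^\eps_t=\frac{(\mu_t v_t)*\rho_\eps}{\mu^\eps_t}.
\]
Then $v^\eps_t$ is smooth and locally Lipschitz, the pair still solves the continuity equation, and by Jensen's inequality $\|v^\eps_t\|_{\mu^\eps_t}\le\|v_t\|_{\mu_t}$. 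The characteristics $X_t$ of $v^\eps$ are then well defined, and $\mu^\eps_t=(X_t)_\#\mu^\eps_s$ by uniqueness for the smooth continuity equation. Using the coupling $(X_s,X_t)$ and Cauchy–Schwarz gives
\[
\cW_2(\mu^\eps_s,\mu^\eps_t)\le\int_s^t\|v^\eps_u\|_{\mu^\eps_u}\,\d u\le\int_s^t\|v_u\|_{\mu_u}\,\d u .
\]
Letting $\eps\to0$, lower semicontinuity of $\cW_2$ under weak convergence yields \eqref{eq: absolute continuity} with $m_t=\|v_t\|_{\mu_t}$. Hence $\boldsymbol\mu\in AC^2(I)$ and $|\mu'|(t)\le\|v_t\|_{\mu_t}$ for every velocity $\boldsymbol v$.

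\emph{Necessity ($AC^2$ $\Rightarrow$ velocity).} This is the main obstacle. For $\varphi\in C_c^\infty(\R^d\times I)$, take optimal plans $\gamma_{s,t}$ between $\mu_s$ and $\mu_t$ and write
\[
\int\varphi\,\d\mu_s-\int\varphi\,\d\mu_t=\int(\varphi(x)-\varphi(y))\,\d\gamma_{s,t}.
\]
A Taylor estimate with Cauchy–Schwarz bounds the difference quotient, in the limit, by $\|\nabla\varphi\|_{L^2(\mu_t)}\,|\mu'|(t)$. Integrating in time gives a linear functional $L(\varphi)=\int_I\int\partial_t\varphi\,\d\mu_t\,\d t$ with
\[
|L(\varphi)|\le\Big(\int_I|\mu'|^2\,\d t\Big)^{1/2}\|\nabla\varphi\|_{L^2(\mu\otimes\d t)},
\]
where $\mu\otimes\d t$ denotes $\mu_t\,\d t$ on $\R^d\times I$. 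The delicate part is making the differentiation and the integration in time rigorous: the quotient is only controlled a.e., so one needs dominated convergence with the metric-derivative bound. Once this is done, $L$ extends to a bounded functional on the closure $V$ of $\{\nabla\varphi\}$ in $L^2(\mu\otimes\d t;\R^d)$. Riesz representation then gives $v\in V$ with $L(\varphi)=\int\langle v,\nabla\varphi\rangle\,\d\mu_t\,\d t$, which is the distributional continuity equation. A localized version of the estimate (testing with $\varphi(x)\eta(t)$) shows $\|v_t\|_{\mu_t}\le|\mu'|(t)$ for a.e.\ $t$, and in particular $t\mapsto\|v_t\|_{\mu_t}\in L^2(I)$.

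\emph{Minimality and uniqueness.} Combining the two directions, the velocity just constructed satisfies $\|v_t\|_{\mu_t}=|\mu'|(t)$, while every velocity satisfies $\|v_t\|_{\mu_t}\ge|\mu'|(t)$, so it has minimal norm. For uniqueness: the set of velocities is an affine space, namely $v^*$ plus fields $w$ with $\nabla\cdot(\mu_t w_t)=0$ for a.e.\ $t$, and this set is closed and convex in $L^2(\mu\otimes\d t)$. A minimizer of the strictly convex $L^2$-norm over it is therefore unique. Equivalently, $v^*_t$ lies in the closure of $\{\nabla\varphi\}$ in $L^2(\mu_t)$, which is orthogonal to the divergence-free fields, so any other velocity has strictly larger norm unless it coincides with $v^*$.
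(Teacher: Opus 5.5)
Your proposal is correct and follows the standard proof in \citep{ambrosioGradientFlowsMetric2008} (Theorem~8.3.1 together with the tangent-space orthogonality argument for the minimal velocity), which is exactly the source the paper cites in place of giving its own proof. Two small points to tidy. With $L(\varphi)=\int_I\int\partial_t\varphi\,\d\mu_t\,\d t$, the Riesz representative solves the continuity equation for $-v$, so flip the sign. Also, the $\mu^\eps_0$-a.e.\ global existence of characteristics for the mollified field is a genuine standard lemma you are invoking, not an automatic consequence of local Lipschitz continuity.
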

On the metric space $(\mathcal P_2(E),\cW_2)$, the gradient flow can be understood in the sense of curves of maximal slope. We first recall the definition of upper gradient.

\begin{definition}
\label{def:upper gradient}
    For a given function $\phi: \mathcal{P}_2(E)\rightarrow\R$, a function $g:\mathcal P_2(E)\rightarrow [0,\infty]$ is a \emph{strong upper gradient} of $\phi$ if, for every curve $\boldsymbol{\mu}\in AC(I)$, the function $g\circ \boldsymbol{\mu}$ is Borel, and there holds
\begin{align*}
    |\phi \circ \boldsymbol{\mu}(t) - \phi \circ \boldsymbol{\mu}(s)| \le \int_s^t g \circ \boldsymbol{\mu}(r) |\boldsymbol{\mu}'|(r) \, dr, \quad \forall s, t \in I, \, s \le t.
\end{align*}
\end{definition}

To define curves of maximal slope, we take $I:=[0,\infty)$ from now on.
\begin{definition}
\label{def: maximal slope}
    Let $g$ be a strong upper gradient of $\phi$. A curve $\boldsymbol{\mu}\in AC^2_{loc}([0,\infty); \cP_2(E))$  is a \emph{curve of maximal slope} with respect to $g$ if   
\begin{align}
\label{eq: maximal slope}
-(\phi\circ\boldsymbol{\mu})'(t)=|\mu'|^2(t)=g^2(\mu_t),\quad \text{a.e. }t\in (0,\infty).
\end{align}
\end{definition}

Notice that if a curve $\boldsymbol{\mu}\in AC^2_{loc}(0,\infty)$ satisfies 
\begin{align*}
    \frac{1}{2}\int_0^t |\mu'|^2(s)\mathrm d s+\frac{1}{2}\int_0^t g^2(\mu_s)\mathrm d s +\phi(\mu_t)\leq \phi(\mu_0),\quad \text{a.e. } t\in (0,\infty),
\end{align*}
then $\boldsymbol{\mu}$ is a curve of maximal slope with respect to $g$ and satisfies~\eqref{eq: maximal slope}.

\begin{remark}
    For a given function $\phi:\mathcal P_2(E)\rightarrow \R$, the corresponding curves of maximal slope may not be unique.
\end{remark}

\begin{definition}[Metric slope on $(\cP_2(E),\cW_2)$]
\label{Def:slope}
Let $\phi:\cP_2(E)\to(-\infty,+\infty]$ be proper.
For $\mu\in\cP_2(E)$ with $\phi(\mu)<\infty$, the \emph{(descending) metric slope} of $\phi$ at $\mu$ is defined by
\begin{equation}
|\partial \phi|(\mu)
:=\limsup_{\cW_2(\nu,\mu)\to0}\frac{\big(\phi(\mu)-\phi(\nu)\big)^+}{\cW_2(\mu,\nu)},
\label{eq:metric-slope}
\end{equation}
where $(a)^+:=\max\{a,0\}$.
We set $|\partial \phi|(\mu)=+\infty$ if $\phi(\mu)=+\infty$.
\end{definition}

\begin{definition}[Strong Wasserstein subdifferential on $\cP_2(E)$]
\label{Def:Wsubdiff}
Let $\phi:\cP_2(E)\to(-\infty,+\infty]$ be proper and l.s.c.
Fix $\mu\in\cP_2(E)$ with $\phi(\mu)<\infty$.
We say that $\xi\in L^2(\mu;\R^d)$ belongs to the \emph{(strong) Wasserstein subdifferential} of $\phi$ at $\mu$,
and write $\xi\in\partial \phi(\mu)$, if there exists a function $o_\mu:[0,\infty)\to\R$ with
$o_\mu(r)/r\to0$ as $r\downarrow0$ such that, for every $\nu\in\cP_2(E)$ and every optimal coupling
$\gamma\in\Gamma_o(\mu,\nu)$,
\begin{equation}
\phi(\nu)-\phi(\mu)
\ \ge\
\int_{E\times E}\langle \xi(x),\,y-x\rangle\,\gamma(\d x,\d y)
\ -\ o_\mu\!\big(\cW_2(\mu,\nu)\big).
\label{eq:strong-wasserstein-subdiff}
\end{equation}
We set $\partial \phi(\mu)=\emptyset$ if $\phi(\mu)=+\infty$.
\end{definition}

\begin{definition}[Slope-realizing subgradients]
\label{Def:slope-realizing}
Define
\begin{equation}
\partial^\circ \phi(\mu)
:=\Big\{\eta\in\partial \phi(\mu):\ \|\eta\|_{L^2(\mu)}=|\partial \phi|(\mu)\Big\}\subset\partial \phi(\mu).
\label{eq:slope-realizing-subdiff}
\end{equation}
\end{definition}

\begin{remark}[Slope and subdifferential]
\label{Rem:slope-subdiff}
For $\xi\in\partial \phi(\mu)$, one typically has
$$
|\partial \phi|(\mu)\ \le\ \|\xi\|_{L^2(\mu)}.
$$
In particular, any $\xi\in\partial^\circ\phi(\mu)$ in the sense of \eqref{eq:slope-realizing-subdiff}
realizes the metric slope through its $L^2(\mu)$-norm.
\end{remark}

\subsection{McKean--Vlasov control problem}
For a given function $\phi\in \cC(\cP_2(E))$, to deal with the possible non-uniqueness of the gradient flows, we propose a control formulation for the potential functions. We first define general McKean--Vlasov control problems and state the relevant results. We distinguish between the diffusive regime ($\epsilon > 0$) and the deterministic transport regime ($\epsilon = 0$).

\subsubsection{Diffusive Regime (\texorpdfstring{$\epsilon > 0$}{epsilon > 0})}

 We fix a reference filtered probability space $(\Omega, \mathcal{F}, (\mathcal{F}_t)_{t \ge 0}, \mathbb{P})$ satisfying the usual conditions and supporting a $d$-dimensional Brownian motion $W$.

Let $\mathcal{U}_{ad}$ be the set of $\mathbb{F}$-adapted processes $\alpha: [0, \infty) \times \Omega \to \mathbb{R}^d$ such that for every $T>0$, $\alpha$ satisfies the square-integrability condition:
\begin{equation}
    \mathbb{E}^{\mathbb{P}_0}\left[ \int_0^T \|\alpha_t\|^2 \d t \right] < \infty,
\end{equation}
and the stochastic exponential
\begin{equation}
    \mathcal{E}_t(\alpha/\eps) := \exp\left( \int_0^t \frac{1}{\epsilon}\alpha_s \cdot dW_s - \frac{1}{2}\int_0^t \frac{1}{\epsilon^2}\|\alpha_s\|^2 \d s \right)
\end{equation}
is a uniformly integrable $\mathbb{P}_0$-martingale on $[0,T]$. For any control $\alpha \in \mathcal{U}_{ad}$, we define the probability measure $\mathbb{P}^\alpha$ on $\mathcal{F}_T$ via the Radon-Nikodym derivative:
\begin{equation}
    \frac{d\mathbb{P}^\alpha}{d\mathbb{P}_0} \bigg|_{\mathcal{F}_t} := \mathcal{E}_t(\alpha/\eps).
\end{equation}
By Girsanov's theorem, under $\mathbb{P}^\alpha$, the process $W_t^\alpha := W_t - \frac{1}{\epsilon}\int_0^t \alpha_s \d s$ is a Brownian motion. 
Consequently, the state process $(X_t)_{t\in [0,T]}$ (which acts as a scaled Brownian motion under $\mathbb{P}_0$) satisfies the controlled dynamics:
\begin{equation}
\label{eq:controlledSDE}
    \d X_t = \alpha_t \d t + \epsilon \,\d W_t^\alpha, \quad X_0 \sim \mu\in \cP_2(E).
\end{equation}
Denote by $\mu_t^\alpha := \text{Law}_{\mathbb{P}^\alpha}(X_t)$ the marginal distribution of the state at time $t$. Then it satisfies the Fokker-Planck equation in the sense of distributions
\begin{equation}
\label{eq: controlled_fokker_planck_eps}
    \partial_t \mu^\alpha_t=-\nabla\cdot(\alpha_t\mu_t^\alpha)+\frac{\eps^2}{2}\Delta\mu_t^\alpha. 
\end{equation}

Given a running cost $\ell: \mathcal{P}_2(E) \to \mathbb{R}$ which is continuous and bounded from below, the objective is to minimize over $\alpha\in \cU_{ad}$ the following cost function
\begin{equation} \label{eq:stochastic_cost}
    J_\eps(\mu, \alpha) := \mathbb{E}^{\mathbb{P}^\alpha} \left[ \int_0^\infty e^{-t} \left( \frac{1}{2}\|\alpha_t\|^2 + \ell(\mu_t^\alpha) \right) \d t \right],
\end{equation}
where $(X,\alpha)$ satisfies Equation~\eqref{eq:controlledSDE}.
While the problem is defined over path-dependent adapted controls, standard results in mean-field control allow us to restrict the search to Markovian feedback policies without loss of optimality.

\begin{definition}[Feedback controls]
    Define the set of admissible feedback controls as
    $$
    \mathcal A
:=\Big\{\alpha:[0,\infty)\times\mathbb R^d\to\mathbb R^d \ \Big|\
\alpha \text{ is Borel measurable, locally bounded, and of linear growth}\Big\}.
$$
\end{definition}

\begin{proposition}[Restriction to Feedback Controls] \label{prop:feedback_reduction}
Fix $\mu\in \cP_2(E)$. For every $\alpha\in \cA$, the associated SDE~\eqref{eq:controlledSDE} admits a weak solution. Moreover,
\begin{equation}
\label{eq:value function}
    v_\eps(\mu):=\inf_{\alpha \in \mathcal{U}_{ad}} J_\eps(\mu, \alpha) = \inf_{\alpha \in \mathcal{A}} J_\eps(\mu, \alpha),
\end{equation}
where the function $v_\eps$ is referred to as the value function.
\end{proposition}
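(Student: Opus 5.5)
We prove the two assertions separately: existence of weak solutions by a change of measure, and equality of the two infima by a mimicking argument.

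\emph{Existence of weak solutions.} We follow the Girsanov construction already used to define $\mathbb P^\alpha$. On the reference space, let $X_t = X_0 + \eps W_t$ with $X_0\sim\mu$ independent of $W$. Given a feedback $\alpha\in\cA$, set $\tilde\alpha_t:=\alpha(t,X_t)$. Since $\alpha$ has linear growth, $\|\tilde\alpha_t\|\le C(1+|X_0|+\eps|W_t|)$. A Beneš-type argument then shows that $\mathcal E(\tilde\alpha/\eps)$ is a true martingale on every $[0,T]$: one applies Novikov on short subintervals $[t_k,t_{k+1}]$ of length $\delta$ small enough that $\E\exp(c\delta\sup_{s\le T}|W_s|^2)<\infty$, and chains the pieces by the tower property. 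For the random initial condition we first condition on $X_0$; the $\mathbb P_0$-martingale property then holds without extra moment assumptions, and uniform integrability on $[0,T]$ is automatic for a martingale on a compact interval. Square integrability of $\tilde\alpha$ under $\mathbb P_0$ follows from $\mu\in\cP_2(E)$ and the Gaussian moments of $W$. Hence $\tilde\alpha\in\cU_{ad}$, and under $\mathbb P^{\tilde\alpha}$ the pair $(X,W^{\tilde\alpha})$ is a weak solution of \eqref{eq:controlledSDE} with feedback $\alpha$. As a byproduct, the feedback class $\cA$ embeds into $\cU_{ad}$ via $\alpha\mapsto\tilde\alpha$, with $J_\eps(\mu,\alpha)=J_\eps(\mu,\tilde\alpha)$. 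This gives $v_\eps(\mu)\le\inf_{\cA}J_\eps(\mu,\cdot)$.

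\emph{Reverse inequality by mimicking.} Fix $\alpha\in\cU_{ad}$ with $J_\eps(\mu,\alpha)<\infty$ and define the Markovian projection $\hat\alpha(t,x):=\E^{\mathbb P^\alpha}[\alpha_t\mid X_t=x]$. Two facts are used.
\begin{enumerate}[label=(\roman*)]
\item The marginals $\mu^\alpha_t$ solve the linear Fokker--Planck equation \eqref{eq: controlled_fokker_planck_eps} with drift $\hat\alpha$. By Gyöngy's theorem, or its weak version due to Brunick--Shreve, which only needs $\E\int_0^T|\alpha_t|\,\d t<\infty$ and a constant nondegenerate diffusion, there is a weak solution with feedback $\hat\alpha$ having the same one-dimensional marginals. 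Therefore $\ell(\mu_t^{\hat\alpha})=\ell(\mu_t^\alpha)$ for all $t$.
\item By conditional Jensen, $\E|\hat\alpha(t,X_t)|^2\le\E|\alpha_t|^2$.
\end{enumerate}
Together these give $J_\eps(\mu,\hat\alpha)\le J_\eps(\mu,\alpha)$. Note that the running cost depends on the state only through the law $\mu_t^\alpha$, and the control cost is convex; this is exactly what makes the projection harmless.

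\emph{Main obstacle.} The projection $\hat\alpha$ need not be locally bounded or of linear growth, so it need not lie in $\cA$. I would first address this by approximation. Truncate and mollify, setting $\alpha^n:=\rho_n*(\hat\alpha\,\mathbbm 1_{|\hat\alpha|\le n})$, which is bounded and hence in $\cA$. Then show $J_\eps(\mu,\alpha^n)\to J_\eps(\mu,\hat\alpha)$ using the following steps:
\begin{enumerate}[label=(\alph*)]
\item $L^2(\d t\otimes\mu_t^{\hat\alpha})$ convergence of $\alpha^n\to\hat\alpha$ on $[0,T]$. This uses that $\eps>0$ makes $\mu_t$ have densities.
\item Stability of solutions of the Fokker--Planck equation, giving $\mu_t^{\alpha^n}\to\mu_t^{\hat\alpha}$ in $\cW_2$. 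This follows from relative-entropy estimates, since by Girsanov $H(\mathbb P^{\alpha^n}\,|\,\mathbb P^{\hat\alpha})\le\frac1{2\eps^2}\E\int_0^T|\alpha^n-\hat\alpha|^2\,\d t$ on path space.
\item Continuity of $\ell$, together with the discount $e^{-t}$ and the lower bound on $\ell$ to control the tail $t>T$ uniformly.
\end{enumerate}
The delicate point is that the entropy bound is computed under the wrong measure; one has to compare the $L^2$ norms under $\mu^{\alpha^n}$ and $\mu^{\hat\alpha}$. If this becomes too technical, a fallback is to invoke the DPP and feedback-reduction results of \cite{DylanDPP,PossamaControl} for the weak formulation, which provide exactly this density of regular feedbacks. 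Combining the mimicking step with this approximation yields $\inf_{\cA}J_\eps\le v_\eps(\mu)$, and hence \eqref{eq:value function}.
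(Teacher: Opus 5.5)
The paper does not prove this proposition; it only cites \cite[Theorem 3.1]{PossamaControl}. Your fallback is therefore exactly the paper's proof, and the real question is whether your self-contained route stands without it.

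Its first parts do.
\begin{itemize}
\item \emph{Existence.} The argument (Girsanov from $X=X_0+\eps W$, Bene\v{s}-type martingale property after conditioning on $X_0$, square integrability from $\mu\in\cP_2(E)$) is correct, and so is the resulting bound $v_\eps(\mu)\le\inf_{\cA}J_\eps(\mu,\cdot)$. The identification $J_\eps(\mu,\alpha)=J_\eps(\mu,\tilde\alpha)$ uses uniqueness in law for linear-growth drift with additive nondegenerate noise, which holds.
\item \emph{Projection.} The Markovian projection plus conditional Jensen correctly gives $J_\eps(\mu,\hat\alpha)\le J_\eps(\mu,\alpha)$. Here $\mu^{\hat\alpha}$ must be read as the marginal flow of the particular mimicking solution, since uniqueness in law for the drift $\hat\alpha$ is not available.
\end{itemize}

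The gap is in the approximation of $\hat\alpha$ by elements of $\cA$. The direction of the entropy is harmless: $H(\mathbb P^{\hat\alpha}\,|\,\mathbb P^{\alpha^n})\le\frac{1}{2\eps^2}\E^{\mathbb P^{\hat\alpha}}\int_0^T|\hat\alpha-\alpha^n|^2(t,X_t)\,\d t$ is an expectation under $\mathbb P^{\hat\alpha}$, which (a) controls, and Pinsker then gives total-variation convergence of the path laws. The real obstruction is the comparison of $L^2$ norms under $\mu^{\alpha^n}$ and $\mu^{\hat\alpha}$ that you mention at the end, and total variation cannot deliver it.

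\textbf{Why the cost does not converge.} The kinetic term $\int_0^T\!\int|\alpha^n|^2\,\d\mu^{\alpha^n}_t\,\d t$ integrates a function of size up to $n^2$ against a measure that moves with $n$. Total variation only bounds the discrepancy by $n^2\sup_{t\le T}\|\mu^{\alpha^n}_t-\mu^{\hat\alpha}_t\|_{\mathrm{TV}}$. That rate is governed by the $L^2$-tail of $\hat\alpha$, which you cannot make small relative to $n^2$. The same missing uniform integrability blocks $\cW_2$-convergence of the marginals, since the second moments of $\mu^{\alpha^n}_t$ are controlled only through that kinetic term.

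\textbf{Step (c) points the wrong way.} You need $\limsup_n J_\eps(\mu,\alpha^n)\le J_\eps(\mu,\hat\alpha)$, i.e.\ an upper bound on $\int_T^\infty e^{-t}\ell(\mu^{\alpha^n}_t)\,\d t$ uniform in $n$. A lower bound on $\ell$ only controls this tail from below. With $\ell$ merely continuous and bounded below, you need either a growth condition on $\ell$ or a DPP-plus-continuity gluing argument, which is what \cite{DylanDPP,PossamaControl} supply.

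\textbf{A cleaner finite-horizon repair.} Truncate before projecting. If $|\alpha_t|\le K$, then $|\hat\alpha|\le K$, so $\hat\alpha\in\cA$ directly. Uniqueness in law for bounded drift then identifies its cost with that of the mimicking solution, and no feedback stability is needed. Density of bounded open-loop controls can be obtained inside the Girsanov framework in two steps:
\begin{itemize}
\item First localize: $\alpha^K_t:=\alpha_t\mathbbm{1}_{\{t\le\tau_K\}}$ with $\tau_K:=\inf\{t:\int_0^t|\alpha_s|^2\,\d s\ge K\}$. Optional sampling gives $\mathbb P^{\alpha^K}=\mathbb P^\alpha$ on $\mathcal F_{\tau_K\wedge T}$, so the kinetic cost does not increase and the marginals converge.
\item Only then truncate pointwise. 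The Girsanov ratio now has energy at most $K/\eps^2$, hence uniform $L^p$ bounds, which supply the uniform integrability that was missing.
\end{itemize}
The infinite-horizon tail still requires the additional input described under step (c).
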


\begin{proof}
See~\cite[Theorem 3.1]{PossamaControl} for the proof that the value function over open-loop controls coincides with the value function over feedback controls in the McKean--Vlasov setting.
\end{proof}

\subsubsection{Deterministic Regime (\texorpdfstring{$\epsilon = 0$}{epsilon=0})}

In the limit $\epsilon \to 0$, the probabilistic Girsanov formulation is replaced by the continuity equation formulation. The state is described directly by the curve of measures.

\begin{definition}[Admissible Pairs]
\label{def:admissiblepair}
    We say that a pair $(\boldsymbol\mu, \alpha)$ is admissible, denoted $(\boldsymbol\mu, \alpha) \in \mathcal{A}_0(\mu)$, if:
\begin{enumerate}
    \item $\boldsymbol\mu: [0, \infty) \to \mathcal{P}_2(E)$ is a continuous curve with $\boldsymbol\mu|_{t=0} = \mu$, i.e., $\mu_0=\mu$.
    \item $\alpha: [0, \infty) \times E \to \mathbb{R}^d$ is a Borel velocity field satisfying  $\int_0^\infty e^{-t} \|\alpha_t\|_{L^2(\mu_t)}^2 \d t < \infty$.
    \item The pair $(\boldsymbol\mu,\alpha)$ satisfies the Continuity Equation in the sense of distributions:
    \begin{equation} \label{eq:continuity_eq}
        \partial_t \mu_t + \nabla \cdot (\alpha_t \mu_t) = 0.
    \end{equation}
\end{enumerate}
\end{definition}
\begin{remark}
    For a curve
    $\boldsymbol{\mu}\in AC^2([0,T];\cP_2(E))$ with $(\boldsymbol{\mu},\alpha)\in \cA_0(\mu)$, we always identify the related (feedback) control $\alpha$ as the unique velocity of the curve with minimal $L^2$-norm via Proposition~\ref{prop: minimal velocity}.
\end{remark}

 Analogously, given the running cost function $\ell$, the cost function is defined as 
\begin{equation} \label{eq:deterministic_cost}
    J_0(\boldsymbol \mu, \alpha) := \int_0^\infty e^{-t} \left( \frac{1}{2} \int_{E} \|\alpha_t(x)\|^2 \mu_t(\d x) + \ell(\mu_t) \right) \d t,
\end{equation}
and, similar to above, the value function is defined as
\begin{equation}
\label{eq: det value function}
    v_0(\mu) = \inf_{(\boldsymbol\mu, \alpha) \in \mathcal{A}_0(\mu)} J_0(\boldsymbol{\mu}, \alpha).
\end{equation}

\begin{remark}
    Since we do not assume uniqueness of the solution to Equation~\eqref{eq:continuity_eq} with given control $\alpha$ and initial distribution $\mu$, the infimum in~\eqref{eq: det value function} is taken over all admissible pairs $(\boldsymbol{\mu},\alpha)\in \cA_0(\mu)$, compared to~\eqref{eq:value function}.
\end{remark}

\subsubsection{Dynamic Programming Principle}
It is well-known that the value function satisfies the dynamic programming principle. The proof of the following result can be found in \cite[Theorem 3.1, Theorem 3.2, Corollary 3.6]{DylanDPP}.

\begin{proposition}[Dynamic programming]\label{prop:DPP-early}
For every $T>0$ and $\mu\in\mc P_2(E)$,
\begin{itemize}
    \item Case $\eps>0$: \begin{equation}
    \label{eq:DPP_eps>0}
     v_\eps(\mu)=\inf_{\alpha\in \cA}\Big\{\int_0^{T} e^{-t}\Big(\tfrac12\,\E\|\alpha_t\|^2+\ell_\eps(\mu_t^\alpha)\Big)\d t+e^{-T}v_\eps(\mu_T^\alpha)\Big\}.
\end{equation}
\item Case $\eps=0$: \begin{equation}
\label{eq:DPP_eps=0}
     v_0(\mu)=\inf_{(\boldsymbol{\mu},\alpha)\in \mathcal A_0(\mu)}\Big\{\int_0^{T} e^{-t}\Big(\tfrac12\,\E\|\alpha_t\|^2+\ell_0(\mu_t^\alpha)\Big)\d t+e^{-T}v_0(\mu_T^\alpha)\Big\}.
\end{equation}
To unify the notation, we combine the previous two cases by introducing $$\cA_\eps(\mu):=\{(\boldsymbol{\mu},\alpha):\mu_0=\mu, \text{and } (\boldsymbol{\mu},\alpha) \text{ satisfies } \eqref{eq: controlled_fokker_planck_eps} \text{ if } \eps>0 \text{ resp. } \eqref{eq:continuity_eq} \text{ if } \eps=0\}.$$
Hence, for $\eps\ge 0$,
\begin{equation*}
    v_\eps(\mu)=\inf_{(\boldsymbol{\mu},\alpha)\in \cA_\eps(\mu)}\Big\{\int_0^{T} e^{-t}\Big(\tfrac12\,\E\|\alpha_t\|^2+\ell_\eps(\mu_t^\alpha)\Big)\d t+e^{-T}v_\eps(\mu_T^\alpha)\Big\}.
\end{equation*}
\end{itemize} 
\end{proposition}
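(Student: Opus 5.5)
The plan is to prove both inequalities of the DPP directly from the definitions \eqref{eq:value function} and \eqref{eq: det value function}, using time-shift invariance of the discounted infinite-horizon problem. I treat $\eps>0$ and $\eps=0$ together through the unified class $\cA_\eps(\mu)$, and write $\ell_\eps$ for the running cost $\ell$. For any admissible pair $(\boldsymbol\mu,\alpha)\in\cA_\eps(\mu)$ and any $T>0$, the cost splits as
\[
J_\eps(\mu,\alpha)=\int_0^T e^{-t}\Big(\tfrac12\E\|\alpha_t\|^2+\ell_\eps(\mu_t)\Big)\d t+e^{-T}J_\eps\big(\mu_T,\alpha_{T+\cdot}\big).
\]
Here $(\mu_{T+\cdot},\alpha_{T+\cdot})$ is again admissible starting from $\mu_T$. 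For $\eps=0$ this holds because the continuity equation \eqref{eq:continuity_eq} is invariant under time shifts and the weighted integrability condition transfers, with a factor $e^{T}$. For $\eps>0$ I work with feedback controls $\alpha\in\cA$, using Proposition~\ref{prop:feedback_reduction}. The shifted control $\alpha(T+\cdot,\cdot)$ is still Borel, locally bounded and of linear growth. The Fokker--Planck equation \eqref{eq: controlled_fokker_planck_eps} restarted from $\mu_T$ is solved by the shifted marginals, and the cost depends only on the marginal flow (written as $\int\|\alpha_t\|^2\d\mu_t$). Bounding the tail term below by $e^{-T}v_\eps(\mu_T)$ and taking the infimum gives the inequality ``$\ge$''.

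For ``$\le$'', fix $(\boldsymbol\mu,\alpha)\in\cA_\eps(\mu)$ on $[0,T]$ and $\delta>0$. I choose a $\delta$-optimal pair $(\boldsymbol\nu,\beta)\in\cA_\eps(\mu_T)$ for $v_\eps(\mu_T)$, which exists whenever $v_\eps(\mu_T)<\infty$; if it is infinite there is nothing to prove. Then I concatenate: $\tilde\alpha_t=\alpha_t$ and $\tilde\mu_t=\mu_t$ on $[0,T]$, and $\tilde\alpha_t=\beta_{t-T}$ and $\tilde\mu_t=\nu_{t-T}$ for $t>T$. Because the two curves agree at $t=T$, the glued curve is continuous. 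The distributional equation holds on $(0,\infty)$: test against $\varphi\in C_c^\infty$, split the time integral at $T$, and observe that the boundary terms $\int\varphi(T,\cdot)\d\mu_T$ cancel. The glued pair is therefore admissible, with cost equal to the running cost on $[0,T]$ plus $e^{-T}J_\eps(\mu_T,\beta)\le e^{-T}(v_\eps(\mu_T)+\delta)$. Letting $\delta\to0$ and taking the infimum over $(\boldsymbol\mu,\alpha)$ yields ``$\le$''. The feedback class $\cA$ is closed under this gluing, since a piecewise-in-time definition preserves measurability, local boundedness and linear growth.

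The main obstacle is the case $\eps>0$ in the original weak (Girsanov) formulation. There one must make sure that concatenating controls, and forming measurable $\delta$-optimal selections, does not leave $\cU_{ad}$; in particular the uniform integrability of $\mathcal E(\alpha/\eps)$ must survive pasting at time $T$. I would avoid conditioning on $\mathcal F_T$ altogether by working at the level of marginal flows. The reduction in Proposition~\ref{prop:feedback_reduction} allows this, since every cost is determined by $(\mu_t,\alpha_t)$ through \eqref{eq: controlled_fokker_planck_eps}. The measurable selection is then needed only at the single deterministic point $\mu_T$, so no selection theorem is required. The remaining technical check is that the shifted and glued feedback controls still give SDE solutions whose laws are the glued Fokker--Planck solutions. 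Weak existence for the glued control follows from Proposition~\ref{prop:feedback_reduction} applied to it, but the resulting laws need not coincide with the glued flow unless uniqueness holds. If that fails, I would replace $\cA$ by the relaxed class $\cA_\eps(\mu)$ of pairs solving \eqref{eq: controlled_fokker_planck_eps} and cite \cite{DylanDPP} for the equality of the two value functions.
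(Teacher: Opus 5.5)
Your argument is correct, but it takes a different route from the paper. The paper gives no proof of its own and simply invokes the general dynamic programming principle of \cite{DylanDPP} (Theorems~3.1, 3.2 and Corollary~3.6). That result is proved by measurable-selection arguments for general McKean--Vlasov problems, in strong, weak and relaxed formulations and possibly with common noise, and it also yields the equivalence of these formulations.

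You instead use the special structure here. Without common noise the marginal flow $t\mapsto\mu_t$ is deterministic, so the problem is a deterministic control problem on $\cP_2(E)$. The DPP then reduces to time-shift invariance plus concatenation at the single point $\mu_T$, and no selection theorem is needed. For $\eps=0$ this is fully self-contained.

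For $\eps>0$ in the feedback form \eqref{eq:DPP_eps>0}, you can drop your hedge about uniqueness. For Borel drifts of linear growth with additive nondegenerate noise, the SDE is well posed in law (Girsanov plus Bene\v{s}' linear-growth criterion). Hence the glued solution restricted to $[0,T]$ is the $\alpha$-solution, and its shift after $T$ is the $\beta$-solution started from $\mu^\alpha_T$. This gives both the cost splitting and the concatenation identity.

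What your route does not give for free is the unified display over the relaxed Fokker--Planck class $\cA_\eps(\mu)$, with the strict value $v_\eps$ inside. The inequality ``$\ge$'' follows from the feedback DPP, because every $\alpha\in\cA$ induces a pair in $\cA_\eps(\mu)$. The inequality ``$\le$'' needs that relaxed pairs cannot undercut $v_\eps$, i.e.\ that the relaxed and strict value functions coincide. This is not a formality: laws produced via Girsanov from $\cU_{ad}$ are equivalent to the reference measure, whereas relaxed flows need not arise from such laws.

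That step, together with Proposition~\ref{prop:feedback_reduction} itself, is exactly where you, like the paper, must fall back on \cite{DylanDPP,PossamaControl}. In short, your proof gives a transparent, elementary derivation of the deterministic and feedback DPPs, while the citation handles the formulation-equivalence issues wholesale.
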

From the dynamic programming principle, we can formally write down the stationary Hamilton-Jacobi-Bellman equation of the McKean--Vlasov control problem for $\eps\ge 0$:

\begin{equation}
\label{eq: HJB}
    w_\eps(\mu)=-\frac{1}{2}\int_E |\nabla\partial_\mu w_\eps(\mu)(x)|^2\mu(\mathrm d x)+\frac{\epsilon^2}{2}\int_E \Delta \partial_\mu w_\eps(\mu)(x)\mu(\d x)+\ell_\eps(\mu).
\end{equation}
As the HJB equation may not have classical solutions, weak solutions known as viscosity solutions are used to study the relation between the value function and the HJB equation. The definition of viscosity solution can be found in Appendix~\ref{ex: gfnotminimizer}. We also recall some results on viscosity solutions from ~\citep{MetricvissolGanbo,AmbFen14, Cosso2022MasterBE,SonerYan}.

\begin{proposition}
\label{prop:vfHJB}
The value function defined in ~\eqref{eq:value function} (resp.~\eqref{eq: det value function}) is a viscosity solution of the HJB equation~\eqref{eq: HJB} for $\eps>0$ (resp.~$\eps=0$).
\end{proposition}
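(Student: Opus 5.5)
The plan is the standard viscosity-solution argument built on the Dynamic Programming Principle (Proposition~\ref{prop:DPP-early}). It rests on two ingredients: continuity of $v_\eps$, and the DPP tested against smooth test functions that touch $v_\eps$ from above or below.

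\textbf{Step 1: continuity and growth of $v_\eps$.} First I would show that $v_\eps$ is bounded below and continuous on $(\cP_2(E),\cW_2)$. A lower bound is immediate from $\ell$ bounded below. An upper bound follows from the zero control $\alpha\equiv0$: for $\eps=0$ the curve is constant, while for $\eps>0$ it is the heat flow. For continuity, given $\mu,\nu$ with an optimal coupling, I would transport an $\eta$-optimal control for $\mu$ to $\nu$ by shifting the initial condition along the coupling. In the diffusive case one keeps the same feedback $\alpha$ (or the same open-loop control, by Proposition~\ref{prop:feedback_reduction}) with synchronous coupling of the noise. Then $\cW_2(\mu^\alpha_t,\nu^\alpha_t)$ stays controlled, and continuity of $\ell$ together with the discount $e^{-t}$ gives $|v_\eps(\mu)-v_\eps(\nu)|\to0$. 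Where the literature already provides this (\cite{Cosso2022MasterBE,SonerYan} for $\eps>0$, \cite{MetricvissolGanbo,AmbFen14} for $\eps=0$), I would simply invoke it.

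\textbf{Step 2: subsolution property.} Let $\varphi\in\cC^2(\cP_2(E))$ be a test function with $v_\eps-\varphi$ attaining a local maximum at $\mu$ and $v_\eps(\mu)=\varphi(\mu)$. Fix a constant, or smooth feedback, control $a$ and apply the DPP inequality ``$\le$'' on $[0,h]$:
\[
\varphi(\mu)\le\int_0^h e^{-t}\big(\tfrac12\|a\|^2_{L^2(\mu^a_t)}+\ell(\mu^a_t)\big)\d t+e^{-h}\varphi(\mu^a_h).
\]
Next I would expand $e^{-h}\varphi(\mu_h^a)$ using the chain rule along the Fokker--Planck equation \eqref{eq: controlled_fokker_planck_eps}. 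This chain rule comes from the linear-derivative definition and It\^o's formula for flows of measures. It yields
\[
\frac{d}{dt}\varphi(\mu_t)=\int\big(a\cdot\nabla\partial_\mu\varphi+\tfrac{\eps^2}{2}\Delta\partial_\mu\varphi\big)\,\d\mu_t .
\]
Dividing by $h$ and letting $h\to0$, then optimizing over $a$ (the optimum is $a=-\nabla\partial_\mu\varphi(\mu)$), produces $\varphi(\mu)\le$ the right-hand side of \eqref{eq: HJB}.

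\textbf{Step 3: supersolution property, the main obstacle.} Here one takes $\eta h$-optimal controls $\alpha^h$ from the DPP and must pass to the limit. The difficulty is that $\alpha^h$ is not fixed. The remedy is the Young inequality $\alpha\cdot p+\tfrac12|\alpha|^2\ge-\tfrac12|p|^2$, applied pointwise with $p=\nabla\partial_\mu\varphi(\mu_t)$. Combined with a uniform estimate $\int_0^h\|\alpha^h_t\|^2\le Ch$, which follows from the DPP and the boundedness of $v_\eps$, this gives $\cW_2(\mu^{\alpha^h}_t,\mu)\to0$ uniformly for $t\le h$. Continuity of $\nabla\partial_\mu\varphi$, $\Delta\partial_\mu\varphi$ and $\ell$ then lets one replace $\mu_t$ by $\mu$, and the supersolution inequality follows.

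In the deterministic case, the test functions and the notion of viscosity solution are metric ones, in the sense of \cite{MetricvissolGanbo,AmbFen14}: they are built from slopes and sub/superdifferentials (Definitions~\ref{Def:slope}--\ref{Def:slope-realizing}). Accordingly, the chain rule in Steps 2--3 would be replaced by the metric chain rule along $AC^2$ curves, via Proposition~\ref{prop: minimal velocity} and the strong upper gradient inequality. In the diffusive case, the non-compactness of $\cP_2$ and the need for test functions regular enough to apply It\^o's formula are best handled by quoting \cite{Cosso2022MasterBE,SonerYan}.
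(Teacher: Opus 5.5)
Your proposal is correct and follows essentially the same route as the paper, which gives no self-contained argument and simply recalls the result from \cite{MetricvissolGanbo,AmbFen14,Cosso2022MasterBE,SonerYan}, where it is obtained by the DPP-based tests you sketch: a fixed control plus the chain rule for the subsolution property, and near-optimal controls plus Young's inequality for the supersolution property. Two minor points: in Step~1 the stability estimate should use the same open-loop control under synchronous coupling (so that $X_t-Y_t=X_0-Y_0$), since a merely Borel feedback in $\cA$ gives no control on $\cW_2(\mu^\alpha_t,\nu^\alpha_t)$; and in Step~3 the DPP with the lower bounds on $\ell$ and $v_\eps$ directly gives only $\int_0^h\|\alpha^h_t\|^2\,\d t=O(1)$ rather than $O(h)$, but this already yields $\cW_2(\mu^{\alpha^h}_t,\mu)\to0$ uniformly on $[0,h]$, which is all your argument uses.
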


Now we specify the class of functions we consider. Denote $\cG_2(\mu):=1+\int_E \|x\|^2\mu(\d x)$. Define the set of functions with at most "quadratic" growth by
\begin{equation}
\label{eq: quadratic growth function}
    \mathcal C_{\mathrm{quad}}
:=\Big\{v:\mathcal P_2\to\mathbb R\;\Big|\; v\text{ is continuous, bounded from below, and }
\exists\, C>0:\;|v(\mu)|\le C\,\mathcal G_2(\mu),\ \forall\mu\Big\}.
\end{equation}

\begin{theorem}
\label{thm: uniqueness of HJB}
    For every $\epsilon\ge 0$, ~\eqref{eq: HJB} admits at most one  viscosity solution in $\mathcal C_{\mathrm{quad}}$.
\end{theorem}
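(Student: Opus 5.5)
The plan is to prove uniqueness through a comparison principle. Suppose $w^1,w^2\in\mathcal C_{\mathrm{quad}}$ are viscosity solutions of \eqref{eq: HJB}, with $w^1$ used as a subsolution and $w^2$ as a supersolution. We will show $w^1\le w^2$ on $\mathcal P_2(E)$. Exchanging the roles then gives equality. The equation is stationary and carries a zeroth-order term $w_\eps$ with coefficient one, which comes from the discount $e^{-t}$. This gives strict monotonicity in the unknown, and that is the structural property comparison relies on.

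The argument is the standard doubling-of-variables scheme, adapted to $\cP_2(E)$, and we plan to import the relevant comparison results rather than rebuild them. For $\eps=0$ we invoke the metric-space comparison principles of \citep{MetricvissolGanbo,AmbFen14}. For $\eps>0$ we invoke those of \citep{SonerYan,Cosso2022MasterBE}. The work on our side is to check that their hypotheses hold.

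\textbf{Hamiltonian.} It is $H(\mu,p)=\tfrac12\|p\|_{L^2(\mu)}^2-\tfrac{\eps^2}{2}\int\Delta\partial_\mu\phi\,\d\mu-\ell_\eps(\mu)$. This is convex and coercive in $p$, and $\ell_\eps$ is continuous.

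\textbf{Penalization.} The test functions combine $\cW_2^2(\mu,\nu)/(2\delta)$ with a confining term $\beta\,\cG_2$, or, when $\eps>0$, a smooth gauge equivalent to it such as the Sobolev-type distance used by Soner--Yan. The confining term restores compactness of near-maximizers of
\[
w^1(\mu)-w^2(\nu)-\tfrac{1}{2\delta}\cW_2^2(\mu,\nu)-\beta\,\cG_2(\mu)-\beta\,\cG_2(\nu),
\]
which is needed because $\cP_2(E)$ is not locally compact. The quadratic growth condition in $\mathcal C_{\mathrm{quad}}$ is what makes this work. Since $|w^i|\le C\cG_2$, a penalty $\beta\cG_2$ with a slightly super-quadratic correction, for instance $\beta\int\|x\|^{2}\log(e+\|x\|)\,\d\mu$, makes the functional tend to $-\infty$ at infinity. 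A Borwein--Preiss or Ekeland perturbation then produces approximate maximizers. Applying the sub- and supersolution inequalities there and subtracting should give
\[
w^1(\mu_\delta)-w^2(\nu_\delta)\le \ell_\eps(\mu_\delta)-\ell_\eps(\nu_\delta)+\text{(error from the quadratic Hamiltonian and the penalty)}.
\]
We then let $\delta\to0$ first and $\beta\to0$ second.

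\textbf{Main obstacle: the quadratic Hamiltonian against quadratic growth.} The gradient of $\beta\cG_2$ is $2\beta x$, so it enters $\tfrac12\|p\|^2$ as a cross term of order $\beta\int\|x\|\,\|p\|\,\d\mu$. That term is not automatically small next to $\cG_2$. The first thing I would try is convexity of $p\mapsto\tfrac12\|p\|^2$ with the scaling trick: compare $\theta w^1$ with $w^2$ for $\theta<1$. For a convex Hamiltonian, $\theta w^1$ is a subsolution of the equation with $\ell_\eps$ replaced by $\theta\ell_\eps$, up to a $(1-\theta)$-absorbable term, which leaves slack $(1-\theta)\tfrac12\|p\|^2$ to absorb the cross terms. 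Note that $\ell_\eps$ is bounded below, and so is each $w^i$. We then let $\theta\uparrow1$ at the end.

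\textbf{The diffusive term.} When $\eps>0$, this term on the penalization needs $\partial_\mu$ of $\cW_2^2$, and that derivative is not $\cC^2$. This is exactly why \citep{SonerYan} replace $\cW_2$ with a smooth Hilbertian distance, and we would adopt their framework directly.
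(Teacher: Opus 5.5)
Your top-level route is the paper's. Its proof only invokes \cite[Theorem 4.1]{SonerYan}, asserted to adapt to the discounted problem, for $\eps>0$, and \cite[Theorems 5.1--5.2]{MetricvissolGanbo} for $\eps=0$. The gap is in the one step you add beyond citation: reconciling the quadratic Hamiltonian with quadratic growth. Your two devices work against each other.

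\textbf{What the $\theta$-trick can and cannot do.} The $\theta$-trick does handle a genuinely quadratic penalty. Let $a,b$ be the penalty gradients at the two points. The worst case over $p$ of $\tfrac12|p-b|^2-\tfrac1{2\theta}|p+a|^2$ is $\tfrac{|a+b|^2}{2(1-\theta)}$. For $\beta\cG_2$ this is at most $\tfrac{4\beta^2}{1-\theta}\big(m_2(\mu)+m_2(\nu)\big)$. Maximality gives $\theta w^1(\mu)-w^2(\nu)\ge M+\beta\cG_2(\mu)+\beta\cG_2(\nu)$, and that penalty term absorbs the error once $4\beta\le 1-\theta$.

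But with a quadratic penalty nothing bounds the penalized functional from above. The only a priori information is $\theta w^1\le\theta C\cG_2$ and $w^2\ge -K$. On the diagonal this gives only the bound $(\theta C-2\beta)\cG_2+K$, which is useless once $2\beta<\theta C$, and you must send $\beta\to0$.

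\textbf{What the super-quadratic penalty breaks.} With $\psi(x)=|x|^2\log(e+|x|)$ compactness returns, but absorption is lost. Since $|\nabla\psi(x)|\ge 2|x|\log(e+|x|)$, the same error bound is of order $\tfrac{\beta^2}{1-\theta}\int|x|^2\log^2(e+|x|)\,\d(\mu+\nu)$. This is not controlled by $\beta\int\psi\,\d(\mu+\nu)$, and it need not even be finite at the maximizer. Worse, $\mu\mapsto\int\psi\,\d\mu$ equals $+\infty$ on a $\cW_2$-dense subset of $\cP_2(E)$: mix in a small mass of a heavy-tailed law with finite second moment. So it is not an admissible $\cC^1$ test function in any of the frameworks you import. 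As written, the growth obstacle you identify remains open.

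\textbf{The lower bound must be used.} Any repair has to use that the supersolution is bounded below. Your sketch mentions this but never puts it to work; the bound $\ell_\eps\ge -K$ in the $\theta$-correction is a different point. Take $\ell\equiv 0$ and $E=\R^d$. Both $w\equiv 0$ and $w(\mu)=-\tfrac12 m_2(\mu)-\tfrac{\eps^2 d}{2}$ are classical solutions of \eqref{eq: HJB}, since $\nabla_x\partial_\mu w=-x$ and $\Delta_x\partial_\mu w=-d$. Both satisfy $|w|\le C\cG_2$, and only the lower bound built into $\cC_{\mathrm{quad}}$ excludes the second.

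Hence no argument built only from $|w^i|\le C\cG_2$, convexity in $p$ and $\ell_\eps\ge -K$ can succeed, whatever penalty is chosen. The lower bound enters naturally through the control representation, where the boundary term $e^{-T}w(\mu_T)$ must be bounded below along near-optimal curves. Turning that into a viscosity-level comparison on non-compact $E$ is a step still to be supplied, not a hypothesis check.

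Note also that \cite{SonerYan} is set on the torus, where this growth issue does not arise. Adopting their framework directly for $\eps>0$ therefore does not address it on $\R^d$.
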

\begin{proof}
    When $\epsilon>0$, the proof idea of \cite[Theorem 4.1]{SonerYan} in finite-time setting can be easily adapted to the discounted infinite-horizon setting here. When $\epsilon=0$, the theorem is stated in~\cite[Section 5, Theorem 5.1 and Theorem 5.2]{MetricvissolGanbo}.
\end{proof}
Combining the previous two results, we can characterize the optimal value function~\eqref{eq:value function}  (resp. \eqref{eq: det value function}) 
as the unique viscosity solution of the HJB equation~\eqref{eq: HJB} for $\eps>0$ (resp.~$\eps=0$).

\subsection{From McKean--Vlasov control to gradient flow}
\label{subsec:gfisminimizer}
Now we show that, for every given functional $V\in \cC^2(\cP_2)$ and every $\eps\ge 0$, we can choose a suitable running cost function $\ell_\eps$ such that the optimal value function in~\eqref{eq:value function} (resp. \eqref{eq: det value function}) coincides with $V$ as function on $\cP_2(E)$. This can be regarded as an inverse problem. The connection between McKean--Vlasov control and Langevin dynamic is also mentioned in~\cite{hofer2025optimalcontrolpotentialgames}.

\begin{theorem}[Identification of $\ell$]\label{thm:controlformulation}
Fix $\eps> 0$ (resp. $\eps=0$). For any given function $V\in \cC^2(\cP_2)\cap \cC_{quad}$ (resp. $V\in \cC^1(\cP_2)\cap \cC_{quad}$), define the running cost function $\ell_\eps: \cP_2(E)\rightarrow \R$ by 
\begin{align}
\label{eq:ell}
&\ell_\eps(\mu):=V(\mu)+\tfrac12\!\int_E\!\|\nabla_x\partial_\mu V(\mu)\|^2\,\d\mu\;-\;
 \tfrac{\varepsilon^2}{2}\!\int_E\!\Delta_x\partial_\mu V(\mu)\,\d\mu\\
 \nonumber &(resp. \quad \ell_0(\mu):=V(\mu)+\tfrac12\!\int_E\!\|\nabla_x\partial_\mu V(\mu)\|^2\,\d\mu\,).
\end{align}
Let $v_\epsilon$ be the value function defined in~\eqref{eq:value function}  (resp. \eqref{eq: det value function}) with running cost $\ell_\eps$. Then  $v_\eps\equiv V$. 
\end{theorem}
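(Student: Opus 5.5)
The plan is to show that $V$ is a viscosity solution of the HJB equation~\eqref{eq: HJB} with running cost $\ell_\eps$ from~\eqref{eq:ell}. Uniqueness then finishes the argument. Proposition~\ref{prop:vfHJB} says $v_\eps$ is a viscosity solution of the same equation. So once both $V$ and $v_\eps$ are known to lie in $\cC_{\mathrm{quad}}$, Theorem~\ref{thm: uniqueness of HJB} forces $v_\eps\equiv V$. This avoids constructing optimal controls, and in particular it sidesteps the possible non-uniqueness of gradient flows.

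\textbf{Step 1: $V$ solves the HJB equation classically.} Substitute $w_\eps=V$ into the right-hand side of~\eqref{eq: HJB}:
\[
-\tfrac12\int_E\|\nabla_x\partial_\mu V(\mu)\|^2\,\d\mu+\tfrac{\eps^2}{2}\int_E\Delta_x\partial_\mu V(\mu)\,\d\mu+\ell_\eps(\mu).
\]
By the definition~\eqref{eq:ell} of $\ell_\eps$, this equals $V(\mu)$ identically. The same cancellation works for $\eps=0$, where only $\cC^1$ regularity is needed. A classical ($\cC^2$, resp. $\cC^1$) solution is then a viscosity solution. To justify this, take any smooth test function $\varphi$ touching $V$ from above or below at $\mu_0$. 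The linear derivatives of $V-\varphi$ vanish at the extremum, in the sense used in the viscosity definition of the Appendix (gradient and Laplacian terms in the Lions/linear-derivative sense). The equation for $V$ therefore transfers to $\varphi$ with equality, which gives both the sub- and supersolution inequalities.

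\textbf{Step 2: Growth classes.} By hypothesis $V\in\cC_{\mathrm{quad}}$. It remains to show $v_\eps\in\cC_{\mathrm{quad}}$, and this is the main obstacle.

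\emph{Upper bound.} Use the zero control $\alpha\equiv0$ (for $\eps>0$ this is heat flow, so $\cG_2(\mu_t)=\cG_2(\mu)+d\eps^2 t$; for $\eps=0$, $\mu_t\equiv\mu$). This gives
\[
v_\eps(\mu)\le \int_0^\infty e^{-t}\ell_\eps(\mu_t)\,\d t.
\]
So a quadratic growth bound on $\ell_\eps$ is required. This does not follow from $V\in\cC_{\mathrm{quad}}$ alone. I would add a mild standing assumption, namely that $\nabla_x\partial_\mu V(\mu)(x)$ has linear growth in $x$ and $\Delta_x\partial_\mu V(\mu)$ has quadratic growth, uniformly in $\mu$. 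This is satisfied in the neural network setting of Section~\ref{sec:4}.

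\emph{Lower bound.} Boundedness from below of $v_\eps$ follows from $\ell_\eps$ being bounded below, as required for the control problem to be well posed. Failing a direct bound, one can argue instead with the verification identity: for any admissible $(\boldsymbol\mu,\alpha)$, the chain rule along~\eqref{eq: controlled_fokker_planck_eps} gives
\[
\tfrac{\d}{\d t}\bigl(e^{-t}V(\mu_t)\bigr)=e^{-t}\Big(-V(\mu_t)+\int_E\big(\nabla_x\partial_\mu V\cdot\alpha_t+\tfrac{\eps^2}{2}\Delta_x\partial_\mu V\big)\,\d\mu_t\Big).
\]
Next use $\nabla_x\partial_\mu V\cdot\alpha\ge-\tfrac12\|\alpha\|^2-\tfrac12\|\nabla_x\partial_\mu V\|^2$ and the definition of $\ell_\eps$, then integrate and let $T\to\infty$. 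The boundary term $e^{-T}V(\mu_T)$ is controlled by the quadratic growth of $V$ together with second-moment estimates on $\mu_T$, which are finite whenever the cost is finite. This yields $J_\eps\ge V(\mu)$, hence $v_\eps\ge V$, and therefore $v_\eps$ is bounded below with quadratic lower control.

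Continuity of $v_\eps$ comes from Proposition~\ref{prop:vfHJB} and the DPP of Proposition~\ref{prop:DPP-early}.

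\textbf{Step 3: Conclude.} Apply Theorem~\ref{thm: uniqueness of HJB} to the two viscosity solutions $V$ and $v_\eps$ in $\cC_{\mathrm{quad}}$ to obtain $v_\eps\equiv V$.

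As a sanity check, the verification computation in Step 2 is an equality exactly when $\alpha_t=-\nabla_x\partial_\mu V(\mu_t)$. This shows directly that the Wasserstein gradient flow of $V$ (with diffusion if $\eps>0$) attains $V(\mu)$, consistent with the identification.
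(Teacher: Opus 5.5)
Your proof follows the paper's route: $V$ satisfies \eqref{eq: HJB} with running cost $\ell_\eps$ by construction and is therefore a viscosity solution, $v_\eps$ is one by Proposition~\ref{prop:vfHJB}, and Theorem~\ref{thm: uniqueness of HJB} identifies the two in $\cC_{\mathrm{quad}}$; the paper simply asserts $v_\eps\in\cC_{\mathrm{quad}}$, so your explicit treatment of that step brings out a hypothesis the paper leaves implicit. One correction: your extra growth hypothesis does \emph{not} hold in Section~\ref{sec:4}, because $\nabla_\theta\partial_\mu L(\mu)$ carries the factor $\tilde L'(f_\mu(x_k)-y_k)$, which is of order $\sqrt{m_2(\mu)}$, so $\ell_0=L+\tfrac12G^2$ is in general of order $m_2(\mu)^2$ (cf.\ Lemma~\ref{lem: G_property}) and the zero control gives only a quartic bound --- there the upper bound $v_0\le L$ should instead come from your closing verification argument run along a gradient flow with $\alpha_t=-\nabla_\theta\partial_\mu L(\mu_t)$, where $0\le e^{-T}L(\mu_T)\le e^{-T}L(\mu)\to0$ makes the boundary term vanish.
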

\begin{proof}
    From Proposition~\ref{prop:vfHJB} with running cost $\ell_\eps$, the value function $v_{\eps}$ defined in~\eqref{eq:value function}  (resp. \eqref{eq: det value function}) with running cost $\ell_\eps$ is a viscosity solution of the HJB equation~\eqref{eq: HJB} and belongs to $\cC_{quad}$.
    Moreover, from the definition of $\ell_\eps$  in \eqref{eq:ell}, the given function $V$  also satisfies the same HJB equation, and $V\in \cC_{quad}$. By Theorem~\ref{thm: uniqueness of HJB}, there is a unique viscosity solution of the HJB equation in $\cC_{quad}$, so $v_{\eps}\equiv V$.
\end{proof}
\begin{remark}
    When $\eps=0$, we only need $V\in \cC^1(\cP_2)$ to deduce the same statement as in the previous theorem.
\end{remark}

We first discuss the properties of the optimal flows from the McKean--Vlasov control problem  when $\eps=0$, showing that \emph{every} minimizing flow is a curve of maximal slope.
\begin{proposition}[Every minimizer is a curve of maximal slope for $v$]
\label{prop: minimizer is ms}
Let $\eps=0$, and denote the value function $v_0$ by $v$. Assume that the value function $v_\eps:\mathcal P_2\to\mathbb R$ in ~\eqref{eq:value function} (resp. \eqref{eq: det value function}) is proper, lower semicontinuous, and $\cW_2$--locally Lipschitz on its sublevel sets.
Then its metric slope $|\partial v|$ is a strong upper gradient on $\{v<+\infty\}$. Let $(\boldsymbol{\mu}^*,\alpha^*)$ be a minimizer of $J_0$  among all admissible pairs with initial condition $\mu$.
Then the following statements hold: 
\begin{enumerate}
\item $\boldsymbol{\mu}^*$ is absolutely continuous in $(\mathcal P_2,\cW_2)$ with metric speed 
$|(\mu^*)'|\in L^2_{\mathrm{loc}}([0,\infty))$, and
\begin{equation}\label{eq:E1}
\int_{E} |\alpha_t^*|^2\,\d\mu_t^* = |(\mu^*)'|_t^{\,2}\qquad\text{ for a.e.\ $t>0$}.
\end{equation}
\item $\boldsymbol\mu^*$ satisfies the energy--dissipation equality
\begin{equation}\label{eq:E2}
-\frac{\d}{\d t}v(\mu_t^*)=\tfrac12\,|\partial v|(\mu_t^*)^{2}
+\tfrac12\,|(\mu^*)'|_t^{\,2}\qquad\text{for a.e. }t>0.
\end{equation}
In particular, $\boldsymbol{\mu}^*$ is a curve of maximal slope for $v$ w.r.t.\ the strong upper gradient $|\partial v|$.
\item There exists a measurable selection $\zeta_t\in \partial^\circ v(\mu_t^*)$ (the metric subdifferential 
with minimal $L^2(\mu_t^*)$--norm) such that
\begin{equation}\label{eq:E3}
\|\zeta_t\|_{L^2(\mu_t^*)}=|\partial v|(\mu_t^*),
\qquad
\alpha_t^*=-\zeta_t\quad\text{for a.e. }t>0. 
\end{equation}
\end{enumerate}
Hence every minimizer $(\boldsymbol{\mu}^*,\alpha^*)$ evolves according to a (metric) gradient--flow dynamics for $v$.
\end{proposition}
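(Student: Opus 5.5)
We combine the dynamic programming principle (Proposition~\ref{prop:DPP-early}, case $\eps=0$) with Young's inequality along the optimal curve.

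\textbf{Step 1: strong upper gradient.} Since $v$ is proper, l.s.c.\ and locally Lipschitz on sublevel sets, its metric slope $|\partial v|$ is a strong upper gradient on $\{v<\infty\}$, by the standard chain rule of \citep{ambrosioGradientFlowsMetric2008}. For any $\boldsymbol\mu\in AC$ with values in a sublevel set, $t\mapsto v(\mu_t)$ is then absolutely continuous with
\[
\bigl|\tfrac{\d}{\d t}v(\mu_t)\bigr|\le |\partial v|(\mu_t)\,|\mu'|(t).
\]

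\textbf{Step 2: regularity and minimal velocity, giving \eqref{eq:E1}.} Since $J_0(\boldsymbol\mu^*,\alpha^*)\le v(\mu)<\infty$, we have $\int_0^T\|\alpha^*_t\|^2_{L^2(\mu^*_t)}\d t<\infty$ for every $T$. Proposition~\ref{prop: minimal velocity} then gives $\boldsymbol\mu^*\in AC^2_{loc}$ and $|(\mu^*)'|_t\le\|\alpha^*_t\|_{L^2(\mu^*_t)}$. If the inequality were strict on a set of positive measure, we could replace $\alpha^*$ by the minimal velocity $v^*$. This keeps the same curve, so the running cost $\ell$ is unchanged and $J_0$ strictly decreases, contradicting optimality. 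Hence \eqref{eq:E1} holds, and $\alpha^*$ is the minimal-norm velocity.

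\textbf{Step 3: energy identity \eqref{eq:E2}.} Apply the DPP on $[s,t]$, shifting the time origin to $\mu^*_s$. Optimality of $\boldsymbol\mu^*$ means the DPP infimum is attained along the optimal curve:
\[
e^{-s}v(\mu^*_s)=\int_s^t e^{-r}\Bigl(\tfrac12|(\mu^*)'|_r^2+\ell(\mu^*_r)\Bigr)\d r+e^{-t}v(\mu^*_t).
\]
Differentiating gives $v(\mu^*_t)-\tfrac{\d}{\d t}v(\mu^*_t)=\tfrac12|(\mu^*)'|^2_t+\ell(\mu^*_t)$ for a.e.\ $t$. We then need a pointwise inequality from the stationary HJB equation \eqref{eq: HJB} at $\eps=0$:
\[
v=\ell-\tfrac12|\partial v|^2.
\]
This holds classically when $v=V\in\cC^1$, as in Theorem~\ref{thm:controlformulation}, since then $|\partial v|(\mu)=\|\nabla_x\partial_\mu V(\mu)\|_{L^2(\mu)}$. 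In general, we would try to derive it from the viscosity/metric HJ characterization of \citep{MetricvissolGanbo}, which is formulated in terms of the slope. Substituting yields
\[
-\tfrac{\d}{\d t}v(\mu^*_t)=\tfrac12|\partial v|^2(\mu^*_t)+\tfrac12|(\mu^*)'|^2_t,
\]
which is \eqref{eq:E2}. Combining with Step 1 and Young's inequality,
\[
\tfrac12|\partial v|^2+\tfrac12|\mu'|^2\le|\partial v|\,|\mu'|,
\]
which forces $|\partial v|(\mu^*_t)=|(\mu^*)'|_t$. So $\boldsymbol\mu^*$ is a curve of maximal slope in the sense of Definition~\ref{def: maximal slope}.

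\textbf{Step 4: identification of the velocity, \eqref{eq:E3}.} Use the chain rule in Wasserstein space (Theorem 10.2.x of \citep{ambrosioGradientFlowsMetric2008}). For a.e.\ $t$ and any $\xi\in\partial v(\mu^*_t)$,
\[
\tfrac{\d}{\d t}v(\mu^*_t)=\int\langle\xi,\alpha^*_t\rangle\,\d\mu^*_t.
\]
Choose $\zeta_t\in\partial^\circ v(\mu^*_t)$ measurably, via a measurable selection theorem applied to the closed-valued subdifferential map. This requires that $\partial^\circ v$ be nonempty; we would obtain this from the local Lipschitz/regularity hypothesis, or from smoothness of $V$. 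Then
\[
-\int\langle\zeta_t,\alpha^*_t\rangle\,\d\mu^*_t=\tfrac12\|\zeta_t\|^2+\tfrac12\|\alpha^*_t\|^2,
\]
which is equality in Cauchy–Schwarz/Young, and hence $\alpha^*_t=-\zeta_t$.

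\textbf{Main obstacle.} The hardest part is the nonsmooth case in Steps 3–4. There one needs the pointwise HJB identity $v=\ell-\frac12|\partial v|^2$ along the curve, and nonemptiness plus measurable selection of slope-realizing strong subgradients. For $v\in\cC^1$ both are immediate: $\zeta_t=\nabla_x\partial_\mu v(\mu^*_t)$. I would first prove the statement in that setting and then handle the general case via the metric viscosity theory.
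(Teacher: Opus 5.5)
Your argument is correct at the level of rigor of the paper's, but it is organized differently.

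\textbf{Comparison with the paper's route.} The paper runs a verification argument. For an \emph{arbitrary} admissible pair it chains four facts: $|\mu'|(t)\le\|\alpha_t\|_{L^2(\mu_t)}$, the upper-gradient inequality, Young's inequality, and $\ell=v+\tfrac12|\partial v|^2$. This gives the pointwise bound $\tfrac12\|\alpha_t\|_{L^2(\mu_t)}^2+\ell(\mu_t)\ge v(\mu_t)-\tfrac{\d}{\d t}v(\mu_t)$. The paper integrates this against $e^{-t}$ and then reads off \eqref{eq:E1}--\eqref{eq:E3} from the fact that every inequality in the chain must be an equality along a minimizer.

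You work only with the minimizer. You get \eqref{eq:E1} from the minimal-velocity replacement, which also shows that $\alpha^*$ is the tangent velocity; that is exactly what your chain rule in Step 4 needs. You get the energy identity from tail optimality in the DPP on each $[s,t]$.

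This spares you a transversality step. The paper asserts $e^{-T}v(\mu_T)\to0$ ``from the DPP'' for an arbitrary pair, which the DPP does not give without a growth condition on $v$; and $J_0\ge v(\mu)$ holds by definition anyway. Along the minimizer, this limit is exactly your identity $e^{-T}v(\mu^*_T)=\int_T^\infty e^{-t}\bigl(\tfrac12\|\alpha^*_t\|^2_{L^2(\mu^*_t)}+\ell(\mu^*_t)\bigr)\,\d t$. The paper's route, in turn, packages everything into one pointwise inequality valid for all admissible pairs. Its equality case is precisely what the converse theorem (every gradient flow is optimal) exploits.

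\textbf{On \eqref{eq:E3}.} Your Step 4 is more complete than the paper's. Equality in the scalar Young inequality only gives $|\partial v|(\mu^*_t)=|(\mu^*)'|_t$ and cannot produce a subgradient. The chain rule followed by equality in Cauchy--Schwarz in $L^2(\mu^*_t)$ does. That equality even forces $\partial^\circ v(\mu^*_t)=\{-\alpha^*_t\}$, so you need no measurable selection theorem: $\zeta_t:=-\alpha^*_t$ is automatically Borel.

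\textbf{The obstacles you flag.} Both are shared with the paper. The paper reads $\ell=v+\tfrac12|\partial v|^2$ off the formal equation~\eqref{eq: HJB} and simply asserts that $\zeta_t$ exists. Both points are justified in the $\cC^1$ setting of Theorem~\ref{thm:controlformulation} with $\zeta_t=\nabla_x\partial_\mu v(\mu^*_t)$, as you note, or under \textnormal{(Cal)} of Definition~\ref{def:Cal}.

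One correction, however: nonemptiness of $\partial^\circ v(\mu^*_t)$ does not follow from local Lipschitz continuity. For $f\in C_c^\infty$, the functional $\mu\mapsto-\bigl|\int f\,\d\mu\bigr|$ is $\cW_2$-Lipschitz. Yet testing the subdifferential inequality with the optimal maps $\mathrm{id}\pm h\nabla f$ (small $h$) shows that its strong subdifferential is empty at every $\mu$ with $\int f\,\d\mu=0$ and $\|\nabla f\|_{L^2(\mu)}>0$. So nonemptiness must enter as an additional hypothesis (smoothness, or a regularity/(Cal)-type assumption), not as a consequence of the stated ones.
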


\begin{proof}
Let $(\boldsymbol{\mu},\alpha)\in\cA_0(\mu)$ be any admissible pair. By the definition of metric derivative,
\begin{equation}\label{eq:BB}
|\mu'|(t) \le \Big(\int_{E} |\alpha_t|^2\,\d\mu_t\Big)^{1/2}\quad\text{for a.e. }t>0.
\end{equation}
Since $|\partial v|$ is a strong upper gradient, we have, along $\boldsymbol\mu$,
$$
-\frac{\d}{\d t}v(\mu_t)\le |\partial v|(\mu_t)\,|\mu'|(t)
\le \tfrac12\,|\partial v|(\mu_t)^2+\tfrac12\,|\mu'|(t)^{2}
\le \tfrac12\,|\partial v|(\mu_t)^2+\tfrac12\!\int_{E} |\alpha_t|^2\,\d\mu_t,
$$
for a.e.\ $t>0$. Using \eqref{eq: HJB} with $\eps=0$, we have $\ell(\mu_t)= v(\mu_t)+\tfrac12|\partial v|(\mu_t)^2$,
hence for a.e.\ $t>0$,
\begin{equation}\label{eq:ineq}
\int_E \frac{1}{2}|\alpha_t|^2\,\d\mu_t+\ell(\mu_t)
\ge \,v(\mu_t)-\frac{\d}{\d t}v(\mu_t).
\end{equation}
Multiplying \eqref{eq:ineq} by $e^{-t}$ and integrating over $[0,T]$ yields
$$
\int_0^T e^{-t}\!\Big(\!\int_E \frac{1}{2}|\alpha_t|^2\,\d\mu_t+\ell(\mu_t)\Big)\,\d t
\ge \int_0^T e^{-t}\Big( v(\mu_t)-\tfrac{\d}{\d t}v(\mu_t)\Big)\,\d t.
$$
Integration by parts on the right-hand side gives
$$
\int_0^T e^{-t}\!\Big(\!\int_{E} \frac{1}{2}|\alpha_t|^2\,\d\mu_t+\ell(\mu_t)\Big)\,\d t
\ge -e^{-T}v(\mu_T)+v(\mu).
$$
Letting $T\to\infty$, from DPP~\eqref{eq:DPP_eps=0} we have $e^{-T}v(\mu_T)\to0$, and thus  
$$
J(\boldsymbol\mu,\alpha)\ge v(\mu).
$$
If $(\boldsymbol \mu^*,\alpha^*)$ is optimal, equality holds everywhere above. 
Equality in \eqref{eq:BB} yields \eqref{eq:E1}, while equality in Young's inequality
forces the existence of $\zeta_t$ realizing the slope with 
$\|\zeta_t\|_{L^2(\mu_t^*)}=|\partial v|(\mu_t^*)$
and $\alpha_t^*=-\zeta_t$, giving \eqref{eq:E3}.
Plugging this into \eqref{eq:ineq} yields \eqref{eq:E2}.
By definition, \eqref{eq:E2} is the energy--dissipation equality, 
so $\boldsymbol\mu^*$ is a curve of maximal slope for $v$ with respect to $|\partial v|$.
\end{proof}

\subsection{From gradient flow to McKean--Vlasov control}
In Section~\ref{subsec:gfisminimizer}, we showed that every minimizer of the McKean--Vlasov control problem is a curve of maximal slope of the value function. In this section, we show that,
 with sufficient assumptions on the given potential functions,  every curve of maximal slope~\eqref{eq: maximal slope} is a minimizing flow of~\eqref{eq:value function}.

 \begin{remark}
     Notice that this inverse direction is not always true. We provide a counterexample in Appendix~\ref{ex: gfnotminimizer}.
 \end{remark}

 We start with the definition of Calibration Property. This is not a new mathematical concept, but collects in a single definition the well-known conditions that: (i) a curve of maximal slope satisfies the energy–dissipation equality, (ii) its metric derivative is represented by a velocity field in the continuity equation, and (iii) the slope is realized by a minimal-norm element of the metric subdifferential. These concepts exist separately in \cite{ambrosioGradientFlowsMetric2008}, \cite{BenamouBrenier2000}, and \cite{Carmona2018Probabilistic}.
We use the term Calibration Property purely for convenience.

\begin{definition}[Calibration Property \textnormal{(Cal)}]
\label{def:Cal}
Let $ v:\cP_2(E)\to(-\infty,+\infty]$ be proper, l.s.c., and $\cW_2$--locally Lipschitz on its sublevel sets.
We say that $v$ satisfies the \emph{Calibration Property} \textnormal{(Cal)} if for every curve $(\mu_t)_{t\ge0}$ of maximal slope for $v$ with respect to $|\partial v|$, 
there exist Borel measurable maps
$$
\alpha : [0,\infty) \times E \to \R^d,
\qquad
\zeta \in L^2(\mu_t \otimes dt;\R^d),
$$ 
with $\zeta_t := \zeta(t,\cdot)\in \partial^\circ v(\mu_t)$ for a.e.\ $t\ge0$, such that for a.e.\ $t\ge0$:
\begin{align*}
\text{\emph{(CE)}}\quad
&\text{$(\boldsymbol{\mu},\alpha)$ solves the continuity equation~\eqref{eq:continuity_eq} in the weak sense,}\\
&|\mu'|^2(t)=\int_{E}|\alpha_t(x)|^2\,\mu_t(\d x);\\[0.2cm]
\text{\emph{(CR)}}\quad
&\frac{\d}{\d t}v(\mu_t)=\int_{E}\zeta_t(x)\cdot\alpha_t(x)\,\mu_t(\d x),
\qquad
\|\zeta_t\|_{L^2(\mu_t)}=|\partial v|(\mu_t).
\end{align*}
\end{definition}

\begin{remark}
    Intuitively, property \textnormal{(Cal)} asserts that along any maximal slope curve there exists a \emph{minimal velocity}
$\alpha$ (realizing the metric derivative) and a \emph{slope-realizing} subgradient $\zeta$
with $\|\zeta_t\|_{L^2(\mu_t)}=|\partial v|(\mu_t)$, such that the chain rule holds with equality.
\end{remark}

\begin{remark}[Sufficient conditions for \textnormal{(Cal)}]
Each of the following guarantees the Calibration Property \textnormal{(Cal)}.
\begin{itemize}
    \item[a.] Smooth $v$ (Lions $C^1$ + chain rule).\\
Assume $v\in \cC^1$ on $\cP_2(E)$ and its Lions derivative
$$
\nabla_W v(\mu):=\nabla_x\partial_\mu v(\mu)\in L^2(\mu;\R^d)
$$
is well-defined.
If for every absolutely continuous curve $\boldsymbol{\mu}$ with velocity $\alpha$ solving the continuity equation,
the chain rule holds:
$$
\frac{\d}{\d t}v(\mu_t)=\int_{\R^d}\nabla_W v(\mu_t)(x)\cdot \alpha_t(x)\,\mu_t(\d x),
$$
then along a maximal slope curve one may take $\zeta_t=\nabla_W v(\mu_t)$ to obtain \textnormal{(CR)};
and by Proposition~\ref{prop: minimal velocity}, one can choose a  velocity $\alpha=-\zeta$ so that it is the minimal velocity such that \textnormal{(CE)} holds. 
\item[b.] Convex subdifferential with measurable minimal-norm selector.\\
Assume that for every $\mu$ with $v(\mu)<\infty$, the set $\partial v(\mu)\subset L^2(\mu;\R^d)$ is a nonempty closed convex set 
and admits a measurable selection $\zeta^{\min}(\mu)\in\partial v(\mu)$ satisfying
$$
\|\zeta^{\min}(\mu)\|_{L^2(\mu)}=|\partial v|(\mu).
$$
Then along any maximal slope curve, the energy--dissipation equality implies equality in Young's inequality,
yielding \textnormal{(CR)} with $\zeta_t=\zeta^{\min}(\mu_t)$; \textnormal{(CE)} again follows by choosing a minimal velocity
via Proposition~\ref{prop: minimal velocity}.
\end{itemize}
\end{remark}

\begin{proposition}[Energy identity under \textnormal{(Cal)}]
\label{prop:cal-energy}
Let $\phi:\cP_2(E)\to(-\infty,+\infty]$ satisfy \textnormal{(Cal)}
in the sense of Definition~\ref{def:Cal}. 
Let $\boldsymbol{\mu}:=(\mu_t)_{t\ge 0}$ be a curve of maximal slope for $\phi$ with respect to $|\partial \phi|$.
Let $(\alpha,\zeta)$ be the measurable selections provided by \textnormal{(Cal)}.
Then, for a.e.\ $t\ge 0$,
$$
\alpha_t=-\zeta_t \qquad \mu_t\text{-a.e.}, 
$$
and
$$
-\frac{\d}{\d t}\phi(\mu_t)=\int_{\R^d}|\alpha_t(x)|^2\,\mu_t(\d x)
=\int_{\R^d}|\zeta_t(x)|^2\,\mu_t(\d x).
$$
Consequently, for every $T>0$,
$$
\phi(\mu_0)-\phi(\mu_T)=\int_0^T\int_{\R^d}|\alpha_t(x)|^2\,\mu_t(\d x)\,\d t.
$$
\end{proposition}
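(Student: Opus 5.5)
The plan is to combine the three pointwise identities available for a.e.\ $t$ and read off equality in a Cauchy--Schwarz inequality. Fix a curve of maximal slope $\boldsymbol{\mu}$ for $\phi$ and the selections $(\alpha,\zeta)$ given by (Cal). Since $|\partial\phi|$ is the strong upper gradient in Definition~\ref{def: maximal slope}, we have
\begin{equation*}
-\frac{\d}{\d t}\phi(\mu_t)=|\mu'|^2(t)=|\partial\phi|^2(\mu_t)\quad\text{for a.e. } t\ge 0 .
\end{equation*}
By (CE), $|\mu'|^2(t)=\|\alpha_t\|^2_{L^2(\mu_t)}$, and by (CR), $\|\zeta_t\|_{L^2(\mu_t)}=|\partial\phi|(\mu_t)$. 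Hence, on a common full-measure set of times, $\|\alpha_t\|_{L^2(\mu_t)}=\|\zeta_t\|_{L^2(\mu_t)}=|\partial\phi|(\mu_t)=:s_t$ and $-\frac{\d}{\d t}\phi(\mu_t)=s_t^2$. This already gives the second displayed identity of the statement.

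For the identification $\alpha_t=-\zeta_t$, combine this with the chain rule in (CR):
\begin{equation*}
-\int_E \zeta_t\cdot\alpha_t\,\d\mu_t=-\frac{\d}{\d t}\phi(\mu_t)=s_t^2=\|\zeta_t\|_{L^2(\mu_t)}\|\alpha_t\|_{L^2(\mu_t)} .
\end{equation*}
Expanding the square then yields
\begin{equation*}
\|\alpha_t+\zeta_t\|^2_{L^2(\mu_t)}=s_t^2+s_t^2+2\int_E\zeta_t\cdot\alpha_t\,\d\mu_t=2s_t^2-2s_t^2=0 ,
\end{equation*}
so $\alpha_t=-\zeta_t$ holds $\mu_t$-a.e. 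Equivalently, this is the equality case of Cauchy--Schwarz with equal norms. When $s_t=0$ the argument is trivial, since both fields then vanish in $L^2(\mu_t)$.

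For the integrated identity, integrate $-\frac{\d}{\d t}\phi(\mu_t)=\|\alpha_t\|^2_{L^2(\mu_t)}$ over $[0,T]$. This requires $t\mapsto\phi(\mu_t)$ to be absolutely continuous on $[0,T]$, so that it equals the integral of its a.e.\ derivative. The main obstacle, though a mild one, is justifying this absolute continuity. I would obtain it from the strong upper gradient inequality of Definition~\ref{def:upper gradient}:
\begin{equation*}
|\phi(\mu_t)-\phi(\mu_s)|\le\int_s^t|\partial\phi|(\mu_r)\,|\mu'|(r)\,\d r .
\end{equation*}
The integrand equals $|\mu'|^2(r)$, which lies in $L^1(0,T)$ because $\boldsymbol\mu\in AC^2_{loc}$. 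Absolute continuity follows, and the fundamental theorem of calculus gives $\phi(\mu_0)-\phi(\mu_T)=\int_0^T\!\int_{\R^d}|\alpha_t|^2\,\d\mu_t\,\d t$. Finiteness of $\phi(\mu_0)$ comes from properness together with the fact that the curve has finite energy, which is implicit in its being a curve of maximal slope.
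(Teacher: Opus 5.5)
Your proof is correct and follows essentially the same route as the paper: the maximal-slope identity, combined with \textnormal{(CE)} and \textnormal{(CR)}, forces the equality case of Cauchy--Schwarz, which gives $\alpha_t=-\zeta_t$ (your expansion of $\|\alpha_t+\zeta_t\|^2_{L^2(\mu_t)}$ is exactly that equality case), and integrating the pointwise identity yields the energy identity. Your version is slightly more careful than the paper's in two places: you keep the sign of \textnormal{(CR)} consistent, whereas the paper's proof writes $-\frac{\d}{\d t}\phi(\mu_t)=\int_E\zeta_t\cdot\alpha_t\,\d\mu_t$, which taken literally would give $\alpha_t=\zeta_t$; and you derive the absolute continuity of $t\mapsto\phi(\mu_t)$ from the strong upper gradient inequality instead of simply asserting the integrated energy--dissipation equality.
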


\begin{proof}
Since $\boldsymbol{\mu}$ is a curve of maximal slope for $\phi$ with respect to $|\partial \phi|$, the energy--dissipation
equality holds:
$$
\phi(\mu_0)-\phi(\mu_T)
=\frac12\int_0^T |\mu'|(t)^2\,\d t+\frac12\int_0^T|\partial \phi|(\mu_t)^2\,\d t.
$$
In particular, $t\mapsto \phi(\mu_t)$ is absolutely continuous and, for a.e.\ $t\in(0,T)$,
$$
-\frac{\d}{\d t}\phi(\mu_t)=|\mu'|(t)^2=|\partial \phi|(\mu_t)^2.
$$

On the other hand, by \textnormal{(CR)} and Cauchy--Schwarz,
$$
-\frac{\d}{\d t}\phi(\mu_t)=\int_{E}\zeta_t(x)\cdot\alpha_t(x)\,\mu_t(\d x)
\le \|\zeta_t\|_{L^2(\mu_t)}\,\|\alpha_t\|_{L^2(\mu_t)}.
$$
Using \textnormal{(CR)} and \textnormal{(CE)} we have
$\|\zeta_t\|_{L^2(\mu_t)}=|\partial \phi|(\mu_t)$ and $\|\alpha_t\|_{L^2(\mu_t)}=|\mu'|(t)$, hence
$$
-\frac{\d}{\d t}\phi(\mu_t)\le |\partial \phi|(\mu_t)\,|\mu'|(t).
$$
Combining with the pointwise identity $-\frac{\d}{\d t}\phi(\mu_t)=|\mu'|(t)^2=|\partial \phi|(\mu_t)^2$
yields equality in the above inequality. Therefore equality holds in Cauchy--Schwarz, which implies
$\alpha_t=-\zeta_t$ $\mu_t$-a.e.\ (on the set where the common norm is nonzero; the claim is trivial otherwise).
Plugging $\alpha_t=-\zeta_t$ back into \textnormal{(CR)} gives
$$
-\frac{\d}{\d t}\phi(\mu_t)=\int_{E}|\alpha_t(x)|^2\,\mu_t(\d x)
=\int_{E}|\zeta_t(x)|^2\,\mu_t(\d x).
$$
Integrating from $0$ to $T$ yields
$$
\phi(\mu_0)-\phi(\mu_T)=\int_0^T\int_{E}|\alpha_t(x)|^2\,\mu_t(\d x)\,\d t.
$$
\end{proof}

\begin{theorem}[Every gradient flow is optimal solution]
Suppose $v$ satisfies Property \textnormal{(Cal)} in Definition~\ref{def:Cal} with its metric slope $|\partial v|$ being a strong upper gradient on set $\{\mu\in \cP_2(E): v(\mu)<\infty\}$. Assume $v$ satisfies the HJB equation~\eqref{eq: HJB}.
Then, for any curve of maximal slope
$(\mu_t)_{t\ge0}$ with initial distribution $\mu$ ,
the pair $(\boldsymbol{\mu},\alpha^\star)$  with $\alpha_t^\star:=-\zeta_t$ (given by \textnormal{(Cal)}) is an optimal solution of
\begin{equation*}
    \inf_{(\boldsymbol{\tilde \mu},\tilde\alpha)\in \cA_0(\mu)}\;
\int_0^\infty e^{-t}\!\Big(\int_E \tfrac12|\tilde\alpha_t(x)|^2\,\tilde\mu_t(\d x) + \ell(\tilde\mu_t)\Big)\,\d t,
\qquad
\partial_t\tilde\mu_t+\nabla\!\cdot(\tilde\mu_t\tilde\alpha_t)=0,\ \tilde\mu_0=\mu,
\end{equation*}
and all such optimal pairs achieve the same value $J=v(\mu_0)$.
\end{theorem}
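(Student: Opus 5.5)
We argue by a verification argument that mirrors the proof of Proposition~\ref{prop: minimizer is ms}: first establish the lower bound $J_0(\tilde{\boldsymbol\mu},\tilde\alpha)\ge v(\mu)$ for every admissible pair, and then show that the maximal slope curve together with $\alpha^\star=-\zeta$ attains it. The HJB equation~\eqref{eq: HJB} with $\eps=0$ will be used in the metric form $\ell(\nu)=v(\nu)+\tfrac12|\partial v|(\nu)^2$. This is the same reading of the Hamiltonian term that was used in the proof of Proposition~\ref{prop: minimizer is ms}, i.e.\ $\int_E|\nabla\partial_\mu v|^2\,\d\nu=|\partial v|(\nu)^2$. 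Under (Cal) this is consistent with $\zeta_t$ realizing the slope.

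\emph{Lower bound.} Take any $(\tilde{\boldsymbol\mu},\tilde\alpha)\in\cA_0(\mu)$ with $J_0<\infty$; otherwise there is nothing to prove.
\begin{itemize}
\item Since $|\tilde\mu'|(t)\le\|\tilde\alpha_t\|_{L^2(\tilde\mu_t)}$, the curve is in $AC^2_{loc}$.
\item Because $|\partial v|$ is a strong upper gradient, Young's inequality gives $-\tfrac{\d}{\d t}v(\tilde\mu_t)\le\tfrac12|\partial v|^2(\tilde\mu_t)+\tfrac12\|\tilde\alpha_t\|^2_{L^2(\tilde\mu_t)}$.
\item Inserting the HJB identity yields the pointwise inequality $\tfrac12\|\tilde\alpha_t\|^2+\ell(\tilde\mu_t)\ge v(\tilde\mu_t)-\tfrac{\d}{\d t}v(\tilde\mu_t)$.
\item Multiply by $e^{-t}$ and integrate by parts on $[0,T]$. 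This gives $\int_0^Te^{-t}(\cdots)\,\d t\ge v(\mu)-e^{-T}v(\tilde\mu_T)$.
\end{itemize}
To pass to $T\to\infty$, note that $v$ is bounded from below (as an element of $\cC_{quad}$, or by assumption). Hence $\liminf_T(-e^{-T}v(\tilde\mu_T))\ge 0$ whenever $v\ge 0$. In general $-e^{-T}v(\tilde\mu_T)\ge -e^{-T}C$ for some constant $C$, and this tends to $0$. Therefore $J_0(\tilde{\boldsymbol\mu},\tilde\alpha)\ge v(\mu)$. Using only the lower bound of $v$ here avoids needing any growth control on $\tilde\mu_T$.

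\emph{Attainment.} Let $\boldsymbol\mu$ be a curve of maximal slope. By (Cal) and Proposition~\ref{prop:cal-energy} we have $\alpha^\star=\alpha=-\zeta$. The pair $(\boldsymbol\mu,\alpha^\star)$ solves the continuity equation with $\|\alpha^\star_t\|^2_{L^2(\mu_t)}=|\mu'|^2(t)=|\partial v|^2(\mu_t)=-\tfrac{\d}{\d t}v(\mu_t)$. Hence every inequality in the lower-bound chain becomes an equality, and $\int_0^Te^{-t}(\tfrac12\|\alpha^\star_t\|^2+\ell(\mu_t))\,\d t=v(\mu)-e^{-T}v(\mu_T)$.

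It remains to show $e^{-T}v(\mu_T)\to0$. Since $v(\mu_t)$ is nonincreasing along the flow, $v(\mu_T)\le v(\mu)$, so $\limsup_T e^{-T}v(\mu_T)\le0$. Together with the lower bound on $v$, this gives $e^{-T}v(\mu_T)\to0$. Admissibility of the pair also requires $\int_0^\infty e^{-t}\|\alpha^\star_t\|^2\,\d t<\infty$. This follows from $\int_0^T\|\alpha^\star_t\|^2\,\d t=v(\mu)-v(\mu_T)\le v(\mu)-\inf v$.

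Thus $J_0(\boldsymbol\mu,\alpha^\star)=v(\mu)\le J_0(\tilde{\boldsymbol\mu},\tilde\alpha)$ for all admissible pairs, so $(\boldsymbol\mu,\alpha^\star)$ is optimal. Since the lower bound is attained, every optimal pair has value $v(\mu_0)$.

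\emph{Main obstacle.} The delicate step is justifying the metric form of the HJB identity, $\ell=v+\tfrac12|\partial v|^2$. A viscosity or classical solution is phrased through $\nabla_x\partial_\mu v$, and we must identify $\|\nabla_W v(\nu)\|_{L^2(\nu)}$ with the metric slope. I would first try to obtain this from (Cal)(CR): $\zeta_t\in\partial^\circ v(\mu_t)$ realizes the slope, and when $v$ is Lions-differentiable the strong subdifferential reduces to $\{\nabla_W v\}$. Failing that, I would simply take the metric form as the meaning of ``$v$ satisfies~\eqref{eq: HJB}'', as the proof of Proposition~\ref{prop: minimizer is ms} already does.
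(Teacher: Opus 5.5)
The attainment half of your argument is essentially the paper's: equality in Young's inequality along the maximal-slope curve, the metric form of the HJB identity, and integration against $e^{-t}$. You handle the boundary term more carefully than the paper does, since $\inf v\le v(\mu_T)\le v(\mu)$ gives $e^{-T}v(\mu_T)\to0$. The gap is in the other half, the lower bound $J_0(\tilde{\boldsymbol\mu},\tilde\alpha)\ge v(\mu)$ for an arbitrary competitor. The paper never proves this directly; it gets optimality from $v$ being the value function. Your self-contained route needs it.

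After integrating by parts you have $\int_0^T e^{-t}(\cdots)\,\d t\ge v(\mu)-e^{-T}v(\tilde\mu_T)$. To conclude, you need $\liminf_{T\to\infty}e^{-T}v(\tilde\mu_T)\le 0$, which is an \emph{upper} control of $v$ along the competitor. Lower boundedness of $v$ only gives $-e^{-T}v(\tilde\mu_T)\le -e^{-T}\inf v$, which is the wrong direction:
\begin{itemize}
\item If $v\ge0$, then $-e^{-T}v(\tilde\mu_T)\le 0$. So $\liminf_T(-e^{-T}v(\tilde\mu_T))\ge0$ holds only if $e^{-T}v(\tilde\mu_T)\to0$, which is exactly what has to be proved.
\item Your inequality $-e^{-T}v(\tilde\mu_T)\ge -e^{-T}C$ presupposes $v\le C$.
\end{itemize}
This is the transversality condition, and it cannot be avoided. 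It is why uniqueness for the HJB equation (Theorem~\ref{thm: uniqueness of HJB}) is only asserted in the growth class $\cC_{quad}$. Your remark that you "avoid needing any growth control on $\tilde\mu_T$" is precisely where the proof fails.

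The step can be repaired using the upper half of $\cC_{quad}$ together with admissibility. Since $|v|\le C\cG_2$, it suffices to show $e^{-T}m_2(\tilde\mu_T)\to0$.
\begin{itemize}
\item Start from $m_2(\tilde\mu_T)^{1/2}\le m_2(\mu)^{1/2}+\cW_2(\mu,\tilde\mu_T)$ and $\cW_2(\mu,\tilde\mu_T)\le\int_0^T\|\tilde\alpha_t\|_{L^2(\tilde\mu_t)}\,\d t$.
\item Cauchy--Schwarz on $[T_0,T]$ gives $e^{-T/2}\int_{T_0}^T\|\tilde\alpha_t\|_{L^2(\tilde\mu_t)}\,\d t\le\big(\int_{T_0}^\infty e^{-t}\|\tilde\alpha_t\|^2_{L^2(\tilde\mu_t)}\,\d t\big)^{1/2}$. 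This is small for $T_0$ large, because $(\tilde{\boldsymbol\mu},\tilde\alpha)\in\cA_0(\mu)$.
\item The contribution of $[0,T_0]$ is finite and is killed by the factor $e^{-T/2}$.
\end{itemize}
Hence $e^{-T}m_2(\tilde\mu_T)\to0$, and therefore $e^{-T}v(\tilde\mu_T)\to0$.

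Alternatively, do what the paper implicitly does. Invoke Theorem~\ref{thm: uniqueness of HJB} (equivalently Theorem~\ref{thm:controlformulation}) to identify $v$ with the value function $v_0$. Then $J_0\ge v(\mu)$ is just the definition of the infimum, and only your attainment computation is needed. Either way, the step is carried by the quadratic upper growth of $v$, not by its lower bound.
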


\begin{proof}
Let $\boldsymbol\mu=(\mu_t)_{t\ge0}$ be a curve of maximal slope and for every $t\ge 0$, choose $(\zeta_t,\alpha_t)$ from \textnormal{(Cal)}.
By Young’s inequality and \textnormal{(CR)}, 
\[
-\frac{\d }{\d t} v(\mu_t)=\int_E \zeta_t\!\cdot\!\alpha_t\,\d\mu_t
\le \tfrac12\|\zeta_t\|_{L^2(\mu_t)}^2+\tfrac12\!\int_E|\alpha_t|^2\,\d\mu_t
=\tfrac12|\partial v|(\mu_t)^2+\tfrac12|\mu'|(t)^2,
\]
with equality since $\boldsymbol \mu$ is a curve of maximal slope. Hence $\alpha_t=-\zeta_t$ a.e.\ and
$\tfrac12\int_E |\alpha_t|^2\,\d\mu_t=\tfrac12\,|\partial v|(\mu_t)^2$.
Using HJB equation~\eqref{eq: HJB} with $\eps=0$,
$$
\int_E |\alpha_t|^2\,\d\mu_t+\ell(\mu_t)= v(\mu_t)-\partial_t v(\mu_t).
$$
Multiplying by $e^{-t}$ and integrating on $[0,\infty)$, the discounted boundary term vanishes and the cost equals $v(\mu)$.
Thus $(\boldsymbol\mu,\alpha^\star)$ is optimal and the value is unique.
\end{proof}

\section{Early Stopping Formulation}
\label{sec:3}
\subsection{An endpoint formulation from DPP}
 For every $\eps\ge 0$,  $T>0$ and $\mu\in \cP_2(E)$, we denote the reachable sets by  
 \begin{equation}
 \label{eq: reachable set}
     \cT_{T,\eps}(\mu):=\begin{cases}
         \{\nu\in \cP_2(E) : \text{exists}\,(\alpha, \,\boldsymbol{\mu}^{\alpha}) \text{ satisfying} ~\eqref{eq:controlledSDE}, \text{ s.t. } \mu^{\alpha}_T=\nu\},\qquad \eps>0,\\
         \{\nu\in \cP_2(E) : \text{exists}\,(\alpha, \,\boldsymbol{\mu}^{\alpha}) \text{ satisfying} ~\eqref{eq:continuity_eq}, \text{ s.t. } \mu^{\alpha}_T=\nu\},\qquad \eps=0.
     \end{cases}
 \end{equation}
When $\eps=0$, we may omit the dependence on $\eps$, and write $\cT_T(\mu)$ for $\cT_{T,0}(\mu)$.
\begin{remark}
\label{rmk:reachable set}
Characterizations of the reachable sets are known in the literature, from either the Schrödinger bridge theory or the optimal transport theory. We recall some results here.
    \begin{itemize}
        \item  When $\eps>0$, for every $\mu,\nu$ such that $\cH(\mu|\ \textit{Leb})<\infty$ and $\cH(\nu|\ \textit{Leb})<\infty$,  there exists $\alpha\in \cA$ such that $\nu\in \cT_{T,\eps}(\mu)$ for every $T>0,\eps>0$. See ~\cite[Theorem 2.12]{Leonard2014SchrodingerSurvey}.
        \item When $\eps=0$, for every given $\mu\in \cP_2(E)$ and every $\nu\in \cP_2(E)$, there exists $\alpha\in \cA$ such that $\nu\in \cT_{T,0}(\mu)$. Moreover, there exists $\alpha^*\in \cA$ such that $\frac{1}{2}\int_0^T\int_E |\alpha_t|^2\mathrm d \mu^{\alpha}_t\mathrm \d t=\frac{1}{2T}\cW_2^2(\mu,\nu)$. See~\cite[Theorem 5.27]{Santambrogio2015OTAM}.
    \end{itemize}
\end{remark}
For every $\mu,\nu\in \cP_2(E)$, we denote the pair of the curve and the corresponding controls that transport $\mu$ to $\nu$ by
\begin{equation*}
\label{eq:controls_two_marginals}
\cA_{T,\eps}(\mu,\nu):=\begin{cases}
    \{(\boldsymbol{\mu},\alpha)\in \cA_\eps(\mu): \exists\,\boldsymbol{\mu}\in AC^2([0,T]) \textit{ s.t. }(\boldsymbol{\mu},\alpha) \text{ satisfies}~\eqref{eq:controlledSDE}, \, \mu_0=\mu,\, \mu_T=\nu\},\quad \eps>0,\\
    \{(\boldsymbol{\mu},\alpha)\in \cA_\eps(\mu): \exists\,\boldsymbol{\mu}\in AC^2([0,T]) \textit{ s.t. }(\boldsymbol{\mu},\alpha) \text{ satisfies}~\eqref{eq:continuity_eq}, \, \mu_0=\mu,\, \mu_T=\nu\},\quad\eps=0.
\end{cases}
\end{equation*}
If $\nu\notin\cT_{T,\eps}(\mu)$, then $\cA_{T,\eps}(\mu,\nu)=\emptyset$. For $\nu\in \cT_{T,\eps}(\mu)$, we define the \emph{discounted energy} as 
$$
d_{T,\eps}^2(\mu,\nu):=\inf\Big\{\int_0^{T} e^{-t}\Big(\tfrac12\,\E\|\alpha_t\|^2+\ell_{\eps}(\mu_t)\Big)\d t:\  \partial_t\mu_t = -\nabla_x\cdot(\mu_t\alpha_t)+\tfrac{\varepsilon^2}{2}\Delta_x\mu_t, \mu_0=\mu, \mu_T=\nu\ \Big\}.
$$

We recall from Proposition~\ref{prop:DPP-early} that the optimal value  function $v$ of~\eqref{eq:value function} satisfies the dynamic programming principle~\ref{prop:DPP-early}. Next we show an equivalent endpoint formulation from the dynamic programming principle.

\begin{proposition}[Endpoint formulation from DPP]\label{prop:endpoint-DPP}
For every $T>0$, $\mu\in \cP_2(E)$ and $\eps\ge 0$,
\begin{align}
\label{eq:endpoint-discount}
     v_\eps(\mu)=\inf_{\nu\in\mc T_{ T,\eps}(\mu)}\Big\{\,d_{T,\eps}^2(\mu,\nu)+e^{-T}v_\eps(\nu)\,\Big\}. 
\end{align}
\end{proposition}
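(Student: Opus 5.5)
The plan is to derive the endpoint formulation directly from the dynamic programming principle, Proposition~\ref{prop:DPP-early}, by splitting the infimum over admissible pairs according to the terminal marginal $\nu=\mu_T$. Every $(\boldsymbol{\mu},\alpha)\in\cA_\eps(\mu)$ restricted to $[0,T]$ has terminal marginal $\mu_T\in\cT_{T,\eps}(\mu)$, and conversely each $\nu\in\cT_{T,\eps}(\mu)$ is attained by some pair. Write
\[
\Phi(\boldsymbol{\mu},\alpha):=\int_0^T e^{-t}\Big(\tfrac12\E\|\alpha_t\|^2+\ell_\eps(\mu_t)\Big)\d t .
\]
Formally, the DPP right-hand side equals
\[
\inf_{\nu\in\cT_{T,\eps}(\mu)}\ \inf_{(\boldsymbol{\mu},\alpha):\,\mu_T=\nu}\Big\{\Phi(\boldsymbol{\mu},\alpha)+e^{-T}v_\eps(\nu)\Big\}
=\inf_{\nu}\Big\{d_{T,\eps}^2(\mu,\nu)+e^{-T}v_\eps(\nu)\Big\}.
\]
This holds because the terminal term depends only on $\nu$, so it can be pulled out of the inner infimum, and the inner infimum of the running cost is by definition $d^2_{T,\eps}(\mu,\nu)$. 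I will prove the two inequalities separately to make this rigorous.

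\emph{Inequality ``$\ge$''.} Fix $\delta>0$. By \eqref{eq:DPP_eps>0} or \eqref{eq:DPP_eps=0}, pick an admissible pair $(\boldsymbol{\mu},\alpha)$ with
\[
\Phi(\boldsymbol{\mu},\alpha)+e^{-T}v_\eps(\mu_T)\le v_\eps(\mu)+\delta .
\]
Set $\nu:=\mu_T$. Then $\nu\in\cT_{T,\eps}(\mu)$, and the restriction of $(\boldsymbol{\mu},\alpha)$ to $[0,T]$ is a competitor in the definition of $d^2_{T,\eps}(\mu,\nu)$. Hence $d^2_{T,\eps}(\mu,\nu)\le\Phi(\boldsymbol{\mu},\alpha)$. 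The right side of \eqref{eq:endpoint-discount} is therefore at most $v_\eps(\mu)+\delta$, and we let $\delta\downarrow0$.

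\emph{Inequality ``$\le$''.} Fix $\nu\in\cT_{T,\eps}(\mu)$ and $\delta>0$, and choose a pair $(\boldsymbol{\mu},\alpha)$ on $[0,T]$ with $\mu_T=\nu$ and $\Phi\le d^2_{T,\eps}(\mu,\nu)+\delta$. The key step is to extend this pair to $[0,\infty)$ so that it becomes an element of $\cA_\eps(\mu)$. For $t\ge T$ we append any admissible continuation from $\nu$. For instance, take the zero control, which gives a constant curve when $\eps=0$ and the heat flow when $\eps>0$; alternatively, use a $\delta$-optimal control for $v_\eps(\nu)$. Concatenating feedback controls in time preserves Borel measurability, local boundedness and linear growth. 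The continuity or Fokker--Planck equation also holds across $t=T$ in the distributional sense, because the curve is continuous at $T$. The integrability condition $\int_0^\infty e^{-t}\|\alpha_t\|^2_{L^2(\mu_t)}\d t<\infty$ is immediate. Once the concatenation is admissible, the DPP gives
\[
v_\eps(\mu)\le\Phi(\boldsymbol{\mu},\alpha)+e^{-T}v_\eps(\nu)\le d^2_{T,\eps}(\mu,\nu)+\delta+e^{-T}v_\eps(\nu).
\]
Taking the infimum over $\nu$ and letting $\delta\downarrow0$ finishes the proof.

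The main obstacle is this concatenation step in the diffusive case $\eps>0$. There admissibility is phrased through Girsanov: the stochastic exponential $\mathcal E(\alpha/\eps)$ must be a uniformly integrable martingale, and the laws $\mu_t^\alpha$ are defined under $\mathbb P^\alpha$. I would handle it by working with feedback controls in $\cA$ and invoking Proposition~\ref{prop:feedback_reduction} together with the weak-formulation DPP of \cite{DylanDPP}, which already allows such pasting. Concretely, a weak solution on $[0,T]$ is glued to a weak solution started from law $\nu$ at time $T$, via regular conditional probabilities. Equivalently, at the level of the marginal flow, the glued curve satisfies \eqref{eq: controlled_fokker_planck_eps} on each piece and is continuous at $T$. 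A minor side issue is consistency of conventions: the cost in $d_{T,\eps}^2$ must be the same discounted running cost that appears in the DPP, and $\cT_{T,\eps}(\mu)$ is nonempty (it contains $\mu_T$ for the zero control). This guarantees that the infimum in \eqref{eq:endpoint-discount} is taken over a nonempty set.
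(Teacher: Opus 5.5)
Your proposal is correct and is essentially the paper's argument. Both split the DPP infimum according to the terminal marginal $\nu=\mu_T$, get ``$\ge$'' because the restriction of any admissible pair is a competitor for $d_{T,\eps}^2(\mu,\nu)$, and get ``$\le$'' by plugging a pair reaching $\nu$ into the DPP and minimizing over such pairs; your explicit $\delta$-optimal selections and the extension of the pair from $[0,T]$ to $[0,\infty)$ are more careful than the paper, which takes that admissibility for granted.
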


\begin{proof}
Fix $T>0$ and $\mu\in\mathcal P_2(E)$. By Proposition~\ref{prop:DPP-early},
$$
v_\eps(\mu)
=\inf_{(\boldsymbol{\mu},\alpha)\in \cA_{T,\eps}(\mu)}\Big\{\int_0^T e^{-t}\Big(\tfrac12\,\E\|\alpha_t\|^2+\ell(\mu_t)\Big)\,\d t
+ e^{-T}v_\eps(\mu_T)\Big\}.
$$  
For any admissible pair $(\boldsymbol{\mu},\alpha)\in \cA_\eps(\mu)$,  set $\nu:=\mu_T\in\mc T_{T,\eps}(\mu_0)$. Then, by definition,
$$
\int_0^T e^{-t}\Big(\tfrac12\,\E\|\alpha_t\|^2+\ell(\mu_t)\Big)\,\d t
\;\ge\; d_{T,\eps}^2(\mu,\nu).
$$
Adding $e^{-T}v_\eps(\nu)$ on both sides, and taking the infimum over $\cA_\eps(\mu)$, yields
$$
v_\eps(\mu)\;\ge\; \inf_{\nu\in\cT_{T,\eps}(\mu)}\Big\{d_{T,\eps}^2(\mu_0,\nu)+e^{-T}v_\eps(\nu)\Big\}. 
$$

Conversely, let $\nu\in\mc T_{T,\eps}(\mu)$  be arbitrary. By definition of the reachable set, there exists an admissible pair $(\boldsymbol\mu,\alpha)\in \cA_\eps(\mu)$  on $[0,T]$ with initial distribution $\mu$  and with $\mu_T=\nu$. Plugging this pair into the dynamic programming identity gives
$$
v_\eps(\mu)\;\le\;\int_0^T e^{-t}\Big(\tfrac12\,\E\|\alpha_t\|^2+\ell(\mu_t)\Big)\,\d t
+ e^{-T}v_\eps(\nu).
$$
Taking the infimum over all admissible $(\boldsymbol\mu,\alpha)$ with the same endpoints $(\mu,\nu)\in \cA_\eps(\mu)$ yields
$$
v_\eps(\mu)\;\le\; d_{T,\eps}^2(\mu,\nu)+e^{-T}v_\eps(\nu).
$$
Since $\nu\in\mc T_{T,\eps}(\mu)$  is arbitrary, we obtain
\[
v_\eps(\mu_0)=\inf_{\nu\in\mc T_{T,\eps}(\mu)}\Big\{d_{T,\eps}^2(\mu,\nu)+e^{-T}v_\eps(\nu)\Big\}. 
\]
This is the desired endpoint formulation.
\end{proof}

\begin{theorem}[\texorpdfstring{$\Gamma$}{Gamma}-convergence with endpoint formulations]\label{thm:gamma}
Fix $V\in \cC_{quad}$ and $\mu\in \cP_2(E)$. For every $T>0$, as $\varepsilon\to0$, the functional $\nu \mapsto d_{T,\varepsilon}^2(\mu,\nu)+e^{-T}V(\nu)$ $\Gamma$-converges (in $\cW_2$) to $\nu \mapsto d_{T,0}^2(\mu,\nu)+e^{-T}V(\nu)$. In particular, minimizers $\nu_{\varepsilon,T}$ converge (along subsequences) to minimizers of the limit functional.
\end{theorem}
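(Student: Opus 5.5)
Since $V$ is continuous on $(\cP_2(E),\cW_2)$ and $\Gamma$-convergence is stable under continuous perturbations, it suffices to prove that $F_\eps:=d^2_{T,\eps}(\mu,\cdot)$ $\Gamma$-converges to $F_0:=d^2_{T,0}(\mu,\cdot)$. We set $F_\eps(\nu)=+\infty$ for $\nu\notin\cT_{T,\eps}(\mu)$. The convergence of minimizers will then follow from the fundamental theorem of $\Gamma$-convergence once we establish equi-coercivity. For that we use that $e^{-T}V\ge -C$ and that $\ell_\eps$ is bounded below. Hence the kinetic energy $\int_0^T\tfrac12\E\|\alpha_t\|^2\d t$ is uniformly bounded along any sequence with bounded $F_\eps$-values. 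This gives uniform second-moment bounds on $\mu_t$, via $\E|X_t|^2\lesssim \E|X_0|^2+T\int_0^T\E|\alpha|^2+\eps^2 dT$. To obtain $\cW_2$-compactness rather than only narrow compactness, we will need slightly more: either $q>2$ moment control, or, more naturally, uniform integrability of $|x|^2$ obtained from the quadratic growth of $\ell_\eps$ and $V$ together with the energy bound. This is where the precise choice of $\ell_\eps$ from \eqref{eq:ell} enters. The intended setting is that $\ell_\eps\to\ell_0$ locally uniformly on $\cW_2$-bounded sets, with $\ell_\eps$ uniformly bounded below.

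\textbf{Liminf inequality.} Take $\nu_\eps\to\nu$ in $\cW_2$ with $\liminf F_\eps(\nu_\eps)<\infty$, and pick near-optimal pairs $(\boldsymbol\mu^\eps,\alpha^\eps)$. We pass to the Benamou--Brenier variables $(\mu^\eps_t,\, m^\eps_t=\alpha^\eps_t\mu^\eps_t)$. The energy bound gives tightness of $\mu^\eps_t\,\d t$ and weak-$*$ compactness of $m^\eps$ as measures on $[0,T]\times E$. It also gives equicontinuity of $t\mapsto\mu^\eps_t$ in a weak distance, since the Fokker--Planck equation \eqref{eq: controlled_fokker_planck_eps} yields $|\int f\,\d(\mu^\eps_t-\mu^\eps_s)|\le \|\nabla f\|_\infty\int_s^t\|\alpha^\eps\|_{L^2}+\tfrac{\eps^2}{2}\|\Delta f\|_\infty|t-s|$. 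By Arzelà--Ascoli we extract a limit $(\mu_t,m_t)$. The $\eps^2\Delta$ term vanishes in the distributional equation, so the limit solves the continuity equation \eqref{eq:continuity_eq} with endpoints $\mu,\nu$. Lower semicontinuity of the convex, jointly l.s.c. Benamou--Brenier functional $(\rho,m)\mapsto\int\tfrac{|m|^2}{2\rho}$ handles the discounted kinetic term, since $e^{-t}$ is a continuous positive weight. Fatou together with $\ell_\eps\to\ell_0$ handles the running cost. Writing $m_t=\alpha_t\mu_t$ then gives $F_0(\nu)\le\liminf F_\eps(\nu_\eps)$.

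\textbf{Recovery sequence.} Fix $\nu$ with $F_0(\nu)<\infty$ and an almost optimal deterministic pair $(\boldsymbol\mu,\alpha)$. We need $\nu_\eps\to\nu$ and $\eps$-admissible pairs with cost converging. The simplest choice keeps the same drift (after smoothing $\alpha$ by convolution, which changes the cost by $o(1)$ by standard Benamou--Brenier regularization) and adds noise. The resulting $\mu^\eps_t$ satisfies $\cW_2(\mu^\eps_t,\mu_t)\to0$ uniformly by a synchronous coupling and Gronwall, using Lipschitz continuity of the smoothed drift. We then set $\nu_\eps:=\mu^\eps_T\to\nu$. The kinetic term is unchanged, and the running cost converges by continuity. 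This gives $\limsup F_\eps(\nu_\eps)\le F_0(\nu)+\delta$, and a diagonal argument in $\delta$ finishes the step. The recovery sequence may move the endpoint, which is allowed in $\Gamma$-convergence, so no Schrödinger-bridge endpoint constraint (Remark~\ref{rmk:reachable set}) is needed.

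\textbf{Main obstacle.} The delicate point is the convergence of $\ell_\eps$. It involves $\tfrac{\eps^2}{2}\int\Delta_x\partial_\mu V$ and therefore needs $V\in\cC^2$ with at most quadratic growth of this term, so that it is $O(\eps^2)$ uniformly on moment-bounded sets. The same issue arises in upgrading narrow to $\cW_2$ convergence in the equi-coercivity step. I would first try to prove uniform $\cW_2$-compactness of sublevel sets through a uniform-integrability estimate on $\E|X_t|^2\mathbf 1_{|X_t|>R}$ derived from the energy bound. If that fails, I would state the result with $\Gamma$-convergence in the narrow topology on $\cW_2$-bounded sets, where lower semicontinuity of $V$ suffices.
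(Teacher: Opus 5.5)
\emph{Recovery sequence.} Your equi-coercivity estimate and your $\Gamma$-liminf step follow the paper's route: a Gronwall bound on $m_2(\mu_t)$, flux measures $\mu_t\alpha_t\,\d t$, the $\eps^2\Delta$ term vanishing in the limit, joint convexity of $(\rho,m)\mapsto|m|^2/\rho$, and Fatou for the running cost. The recovery sequence, however, has a genuine gap.

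\begin{itemize}
\item Mollifying the drift and running $\d X_t=\alpha^\delta_t(X_t)\,\d t+\eps\,\d W_t$ from $X_0\sim\mu$ is a Lagrangian construction. As $\eps\to0$, your synchronous-coupling bound compares $\mu^\eps_t$ only with the push-forward of $\mu$ by the flow of $\alpha^\delta$, not with $\mu_t$.
\item The Benamou--Brenier regularization $(\mu_t*\rho_\delta,\,(\alpha_t\mu_t)*\rho_\delta/(\mu_t*\rho_\delta))$ follows that flow only when started from $\mu*\rho_\delta\neq\mu$. Pairs not induced by any flow are therefore lost, and the paper explicitly allows non-uniqueness of the continuity equation.
\item Concrete counterexample: in $d=1$ take $\mu=\delta_0$ and $\nu=\frac12(\delta_{-T}+\delta_T)$. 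Here $F_0(\nu)<\infty$ via $\mu_t=\frac12(\delta_{-t}+\delta_t)$ and $\alpha_t(x)=\mathrm{sign}(x)$. Gaussian smoothing gives $\alpha^\delta_t(x)=\tanh(xt/\delta^2)$, which vanishes at the origin. So for each fixed $\delta$ your $\mu^\eps_T\to\delta_0$, while $\cW_2(\delta_0,\nu)=T$.
\item Letting $\delta\to0$ together with $\eps$ does not rescue this. Since $\mathrm{Lip}(\alpha^\delta_t)=t/\delta^2$, whenever $\eps\, e^{T^2/(2\delta^2)}$ is small the Gronwall estimate certifies closeness to the stuck flow. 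Otherwise it gives nothing.
\item The kinetic term is also not ``unchanged'', because $\alpha^\delta$ is integrated against a different law.
\end{itemize}

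The missing idea is the paper's \emph{Eulerian} recovery. Let $P_s$ be convolution with a Gaussian of covariance $sI$, and set $\mu^\eps_t:=P_{\eps^2t}\mu_t$ and $\mathbf m^\eps_t:=P_{\eps^2t}(\mu_t\alpha_t)$. Since $\partial_sP_s=\frac12\Delta P_s$ and $P_s$ commutes with $\nabla\cdot$, this pair has the following properties:
\begin{itemize}
\item it solves $\partial_t\mu^\eps_t+\nabla\cdot\mathbf m^\eps_t=\frac{\eps^2}{2}\Delta\mu^\eps_t$ exactly;
\item $\mu^\eps_0=\mu$, because the smoothing has zero width at $t=0$;
\item $\int|\alpha^\eps_t|^2\,\d\mu^\eps_t\le\int|\alpha_t|^2\,\d\mu_t$ by Jensen;
\item $\cW_2(\mu^\eps_t,\mu_t)^2\le d\,\eps^2t$.
\end{itemize}
No Lipschitz drift, flow map, or Gronwall estimate is needed: the noise is used up in producing the smoothing.

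\emph{Two smaller points.} First, you assume $\ell_\eps$ is uniformly bounded below. With $\ell_\eps$ as in \eqref{eq:ell} one only has $\ell_\eps\ge-\frac{\eps^2}{2}(C_0+C_1m_2)$. The paper feeds this into the Gronwall bound for $\sup_{t\le T}m_2(\mu_t)$ and absorbs the $\eps^2C_1\sup_t m_2(\mu_t)$ term for $\eps\le\eps_*(T)$; you need the same bootstrap.

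Second, your caution about compactness is warranted, and the paper does not settle it. It passes from uniform second-moment bounds directly to $\cW_2$-precompactness, but such bounds give only narrow precompactness. Your hoped-for uniform-integrability estimate cannot hold in general. For constant $V$ one has $\ell_\eps\equiv V$, so the sublevel sets of $F_0$ contain $\cW_2$-balls around $\mu$, and these are not $\cW_2$-precompact. The workable statement is your narrow-topology fallback on moment-bounded sets. There the $\Gamma$-liminf requires narrow lower semicontinuity of both $V$ and $\ell_0$, which is an extra hypothesis not implied by $V\in\cC_{quad}$.
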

The proof is postponed to Appendix~\ref{apd:gamma}.
\subsection{An endpoint formulation from curves of maximal slope}
In this section, we focus on the deterministic case and set $\eps=0$, and omit the dependence on $\eps$. Recall from Definition~\ref{def:upper gradient} and Definition~\ref{def: maximal slope} the concepts of upper gradient and curve of maximal slope. Recall that in Proposition~\ref{prop: minimizer is ms}, we show that 
\begin{equation}
\label{eq:G}
G(\mu):=\sqrt{2\left(\ell(\mu)-v_0(\mu)\right)} \ge 0
\end{equation}
is an upper gradient of $v$. For every $\mu\in \cP_2(E)$  and $\nu\in \cT_T(\mu)$, we define the \emph{Finsler metric} 
\begin{align}
\label{eq:finslerdistance}
d_G(\mu,\nu)&:=\inf_{\boldsymbol{\mu}\in \cA_{1,0}(\mu,\nu)}\left\{\int_0^1  |\mu'|(t)G(\mu_t)\,\mathrm d t\right\},
\end{align}
and the energy
\begin{align}
    \label{eq: finsler_energy}
    \mathcal E_G^T(\mu,\nu)
:=
\inf\left\{
\int_0^T\Big(\tfrac12|\eta'|^2(t)+\tfrac12G(\eta_t)^2\Big)\,\d t:\ 
\eta\in AC_2([0,T];\mathcal P_2(E)),\ \eta_0=\mu,\ \eta_T=\nu
\right\}.
\end{align}
\begin{remark}
    This action energy is different from the previous discounted energy with different running cost function. There is no discount factor. Moreover, $d_G$ does not depend on the choice of integrated time horizon.
\end{remark}
We first restate some standard results of the time and length reparameterization of curves. The following lemma can be found in~\cite{RossiSavareMielke2008}. 
\begin{lemma}[Length and time reparameterization]
\label{lem:reparametrization}
    Fix $0\le s<t$. Let $\omega:\cP_2(E)\to [0,\infty)$ be a Borel function, and let $\boldsymbol{\mu}\in AC([s,t]; \cP_2(E))$. If $L:=\int_s^t \frac{|\mu'|(r)}{\omega(\mu_r)}\d r<\infty$, then the reparametrized curve $\boldsymbol{\mu}_{\omega}:[0,L]\to \cP_2(E)$ with
    \begin{align*}      \mu_\omega(r):=\mu(\kappa_{\omega}(r)),\qquad \kappa_{\omega}(r):=\inf\left\{p\in [s,t]: \int_s^p \frac{|\mu'|(\tau)}{\omega(\mu(\tau))}\d \tau=r\right\},\qquad r\in [0,L],
    \end{align*}
    satisfies
    \[
    \boldsymbol{\mu}_{\omega}\in AC([0,L];\cP_2(E)),\quad \mu_\omega(0)=\mu_s,\quad \mu_\omega(L)=\mu_t,\quad \omega(\mu_{\omega}(r))=|\mu'_\omega|(r) \quad \textit{a.e. } r\in [0,L].
    \]
    If additionally $\int_s^t |\mu'|(r)\omega(\mu_r) \, \d r < \infty$, we have $\boldsymbol{\mu}_{\omega}\in AC^2([0,L];\cP_2(E))$ and
    \[
        \int_0^L |\mu_\omega'|^2(r) \, \d r = \int_0^L \omega(\mu_\omega(r))^2 \, \d r = \int_s^t |\mu'|(r) \omega(\mu_r) \, \d r < \infty.
    \]
\end{lemma}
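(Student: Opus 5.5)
The plan is to reduce everything to the one-dimensional time change $\kappa_\omega$ together with the scalar chain rule for absolutely continuous curves. Set $\sigma(p):=\int_s^p |\mu'|(\tau)/\omega(\mu_\tau)\,\d\tau$ for $p\in[s,t]$.

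\textbf{Step 1: the time change.} The map $\sigma$ is continuous, nondecreasing and absolutely continuous, with $\sigma(s)=0$ and $\sigma(t)=L<\infty$. The generalized inverse $\kappa_\omega(r)=\inf\{p:\sigma(p)=r\}$ is then well defined, nondecreasing, and satisfies $\sigma(\kappa_\omega(r))=r$. This gives the boundary values $\mu_\omega(0)=\mu_s$ and $\mu_\omega(L)=\mu_t$; for the latter we use that on intervals where $\sigma$ is constant we have $|\mu'|=0$ a.e. (where $\omega<\infty$), so $\boldsymbol\mu$ is constant there.

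That same observation shows the reparametrized curve does not depend on the choice of preimage. It also gives the key estimate: for $r_1<r_2$,
\[
\cW_2(\mu_\omega(r_1),\mu_\omega(r_2))\le\int_{\kappa_\omega(r_1)}^{\kappa_\omega(r_2)}|\mu'|(\tau)\,\d\tau .
\]

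\textbf{Step 2: absolute continuity and the speed identity.} Applying the change of variables formula for the monotone AC map $\sigma$ (with $\d r=\frac{|\mu'|}{\omega}\d\tau$), the right-hand side of the estimate above becomes $\int_{r_1}^{r_2}\omega(\mu_\omega(r))\,\d r$. Hence $\boldsymbol\mu_\omega\in AC$ as soon as this integral is finite, and $|\mu_\omega'|(r)\le\omega(\mu_\omega(r))$ a.e.

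For the reverse inequality, differentiate the identity $\int_s^p|\mu'|=\int_0^{\sigma(p)}|\mu'_\omega|$. This identity holds because the length of a curve is invariant under monotone reparametrization and constant pieces have zero length. Differentiating gives $|\mu'|(p)=|\mu'_\omega|(\sigma(p))\,\sigma'(p)=|\mu'_\omega|(\sigma(p))\,|\mu'|(p)/\omega(\mu_p)$. At points where $|\mu'|(p)>0$ this yields $|\mu'_\omega|(\sigma(p))=\omega(\mu_p)$. The set of $p$ with $|\mu'|(p)=0$ is mapped by $\sigma$ to a Lebesgue-null set, since $\sigma'=0$ there. The null set of non-differentiability points also maps to a null set, because $\sigma$ is AC (Lusin N property). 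Together these give equality a.e. in $r$.

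\textbf{Step 3: the $AC^2$ statement.} Assume additionally that $\int_s^t|\mu'|\omega(\mu_r)\,\d r<\infty$. By Step 2 and the same change of variables,
\[
\int_0^L|\mu'_\omega|^2\,\d r=\int_0^L\omega(\mu_\omega(r))^2\,\d r=\int_s^t\omega(\mu_\tau)^2\frac{|\mu'|(\tau)}{\omega(\mu_\tau)}\,\d\tau=\int_s^t|\mu'|\,\omega(\mu_\tau)\,\d\tau .
\]
The integral is finite by assumption, which gives $\boldsymbol\mu_\omega\in AC^2$. This also retroactively justifies the finiteness of $\int\omega(\mu_\omega)\,\d r$ used in Step 2, via Cauchy--Schwarz on the bounded interval $[0,L]$. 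For the first claim alone, local finiteness follows from the Step 1 bound, since $\int|\mu'|<\infty$.

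\textbf{Main obstacle.} The delicate part is the measure-theoretic handling of the degenerate sets: where $\omega(\mu_\tau)=0$ (forcing $|\mu'|=0$ a.e. there, since $L<\infty$), where $\omega=\infty$, and where $\sigma$ is flat. These sets must be shown to be negligible under both the change of variables and its inverse. I would handle them with the area/coarea-type formula $\int_0^L f(\kappa_\omega(r))\,\d r=\int_s^t f(\tau)\sigma'(\tau)\,\d\tau$, valid for Borel $f\ge 0$ and monotone AC $\sigma$, applied with $f=\omega\circ\boldsymbol\mu$ and $f=(\omega\circ\boldsymbol\mu)^2$, as in \cite{RossiSavareMielke2008}.
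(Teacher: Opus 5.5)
Your proof is correct. It is the standard time-change argument: change of variables under the monotone absolutely continuous map $\sigma$, the bound $\cW_2(\mu_a,\mu_b)\le\int_a^b|\mu'|$, and null-set bookkeeping via the area formula. The paper does not reproduce this argument and simply cites \cite{RossiSavareMielke2008}.

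One simplification is worth noting. Your change of variables already gives $\int_0^L\omega(\mu_\omega(r))\,\d r=\int_s^t|\mu'|(\tau)\,\d\tau=\int_0^L|\mu_\omega'|(r)\,\d r$, where the second equality is invariance of length. Combined with $|\mu_\omega'|\le\omega(\mu_\omega)$ a.e., this yields equality a.e.\ directly, so you need neither the chain-rule differentiation nor the Cauchy--Schwarz detour for finiteness.
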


\begin{proposition}[Basic properties of $d_G$]\label{prop:dg-basic}

For every $\mu,\nu\in \cP_2(E)$, we have the following:
\begin{itemize}
    \item[(i)] $d_G(\mu,\nu)\ge0$ and $d_G(\mu,\nu)=d_G(\nu,\mu)$.
    \item[(ii)] $d_G(\mu^0,\mu^1)+d_G(\mu^1,\mu^2)\ge d_G(\mu^0,\mu^2)\hspace*{0.4cm} \forall\,\mu^0,\mu^1,\mu^2\in \cP_2(E)$.
    \item[(iii)] $d_G(\mu,\nu)=\inf_{T>0} \cE_G^T(\mu,\nu)$.
\end{itemize}  
In general, $d_G$ is a pseudo-distance on $\cP_2(E)$. 
\end{proposition}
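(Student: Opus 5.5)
Since $G\ge 0$ and $|\mu'|\ge 0$, the integrand in \eqref{eq:finslerdistance} is nonnegative, so $d_G(\mu,\nu)\ge 0$. For symmetry, take any admissible curve $\boldsymbol{\mu}$ from $\mu$ to $\nu$ on $[0,1]$ and reverse time, $\tilde\mu_t:=\mu_{1-t}$. Then $|\tilde\mu'|(t)=|\mu'|(1-t)$ for a.e.\ $t$, and $\tilde{\boldsymbol\mu}$ solves the continuity equation with velocity $-\alpha_{1-t}$. Hence it is admissible from $\nu$ to $\mu$ with the same action $\int_0^1|\tilde\mu'|G(\tilde\mu_t)\,\d t$. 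Taking infima in both directions gives $d_G(\mu,\nu)=d_G(\nu,\mu)$.

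For (ii), I would concatenate curves. Fix $\delta>0$ and choose near-optimal curves $\boldsymbol{\mu}^{a}$ from $\mu^0$ to $\mu^1$ and $\boldsymbol{\mu}^{b}$ from $\mu^1$ to $\mu^2$, each on $[0,1]$. Define $\eta_t:=\mu^a_{2t}$ on $[0,\tfrac12]$ and $\eta_t:=\mu^b_{2t-1}$ on $[\tfrac12,1]$. This curve is continuous at $t=\tfrac12$, lies in $AC^2$, and satisfies the continuity equation with the rescaled, glued velocities. The Finsler action $\int |\eta'|G(\eta)\,\d t$ is invariant under affine time changes because it is $1$-homogeneous in the speed. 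Hence the action of $\eta$ is at most $d_G(\mu^0,\mu^1)+d_G(\mu^1,\mu^2)+2\delta$, and letting $\delta\to 0$ proves the triangle inequality. The same homogeneity shows that $d_G$ does not depend on the time horizon; more generally, the action is invariant under any monotone reparametrization, by the chain rule for metric derivatives. I will use this in (iii).

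For (iii), the inequality $d_G\le \inf_T\mathcal E^T_G$ follows from Young's inequality. For any $\eta\in AC^2([0,T])$ joining $\mu$ to $\nu$, we have $|\eta'|G(\eta)\le \tfrac12|\eta'|^2+\tfrac12G(\eta)^2$. Rescaling $\eta$ to $[0,1]$ does not change the left-hand integral, so $d_G(\mu,\nu)\le \mathcal E^T_G(\mu,\nu)$ for every $T$. For the reverse inequality, fix $\delta>0$ and a curve $\boldsymbol{\mu}$ on $[0,1]$ whose action is below $d_G+\delta$. Then apply Lemma~\ref{lem:reparametrization} with $\omega=G$, which produces a curve on $[0,L]$ with $|\mu_G'|=G(\mu_G)$ a.e., where $L=\int_0^1 |\mu'|/G(\mu_r)\,\d r$. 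On this curve Young's inequality is an equality, and the lemma gives $\int_0^L\big(\tfrac12|\mu_G'|^2+\tfrac12G^2\big)\,\d r=\int_0^1|\mu'|G(\mu_r)\,\d r$. Therefore $\mathcal E^L_G\le d_G+\delta$.

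The main obstacle is the degenerate set where $G(\mu_r)=0$ while $|\mu'|(r)>0$. There $L$ may be infinite, and the lemma does not apply directly. I would first try $\omega=G+\sigma$ for small $\sigma>0$, which makes $L_\sigma<\infty$ provided $\int_0^1|\mu'|\,\d r<\infty$. The resulting energy is bounded by $\int|\mu'|\,(G+\sigma)\,\d r+\tfrac12\int\big(G^2-(G+\sigma)^2\big)\,\d r$, and the last term is $\le 0$, so the energy is at most $d_G+\delta+\sigma\int_0^1|\mu'|$. Letting $\sigma\to0$ then $\delta\to0$ would close the argument. Finally, $d_G$ is only a pseudo-distance: $G$ may vanish (for example at critical points of $v$), which can force $d_G(\mu,\nu)=0$ with $\mu\neq\nu$, and $d_G$ may be $+\infty$ if $\nu\notin\cT_T(\mu)$.
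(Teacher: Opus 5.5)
Your proof is correct and follows the paper's argument essentially step by step: time reversal for symmetry, concatenation with doubled speed for the triangle inequality, Young's inequality plus rescaling to $[0,1]$ for $d_G\le\inf_T\mathcal E_G^T$, and reparametrization via Lemma~\ref{lem:reparametrization} with a weight regularized away from zero for the reverse inequality. The differences are cosmetic. You regularize with $G+\sigma$ where the paper uses $\max\{G,1/n\}$; your choice works in every case, so the paper's case split on whether $\int_0^1 |\gamma'|/G=\infty$ becomes unnecessary. You should also add the one-line observation that the constant curve gives $d_G(\mu,\mu)=0$, which the pseudo-distance claim requires.
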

The proof is postponed to Appendix~\ref{apx:proofs}. Similar to Proposition~\ref{prop:endpoint-DPP}, curves of maximal slope provide another form of endpoint formulation. Recall that $G$  is a strong upper gradient of $v_0$.

\begin{proposition}[Endpoint formulation from curves of maximal slope]
\label{prop:endpoint_MS}
Fix $\mu\in \cP_2(E)$. Assume there exists a curve of maximal slope
$\eta\in AC^2_{\mathrm{loc}}([0,\infty);\mathcal P_2(E))$
for $v_0$ with respect to $G$ such that $\eta_0=\mu$, then for every $T>0$,
\begin{align}
\label{eq:perT_argmin}
    v_0(\mu)=\inf_{\nu\in \cT_T(\mu)}\left\{ \cE^T_G(\mu,\nu)+v_0(\nu)\right\}.
\end{align}
Moreover, the infimum is attained at $\nu=\eta_T$.
\end{proposition}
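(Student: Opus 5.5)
The plan is to prove the two inequalities in \eqref{eq:perT_argmin} separately. The lower bound ``$\le$'' comes from the strong upper gradient property of $G$ together with Young's inequality. The upper bound and the attainment both come from the energy--dissipation equality along the curve of maximal slope $\eta$.

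\emph{Step 1 (lower bound for every competitor).} Fix $\nu\in\cT_T(\mu)$ and any curve $\boldsymbol\rho\in AC^2([0,T];\cP_2(E))$ with $\rho_0=\mu$ and $\rho_T=\nu$. If no such curve has finite action, then $\cE^T_G(\mu,\nu)=+\infty$ and there is nothing to prove. Otherwise $G$ is a strong upper gradient of $v_0$, as recalled before \eqref{eq:G} via Proposition~\ref{prop: minimizer is ms}, since $G=|\partial v_0|$ by the HJB relation $\ell=v_0+\tfrac12|\partial v_0|^2$. Applying Definition~\ref{def:upper gradient} and then Young's inequality gives
\[
v_0(\mu)-v_0(\nu)\le\int_0^T G(\rho_t)|\rho'|(t)\,\d t\le\int_0^T\Big(\tfrac12|\rho'|^2(t)+\tfrac12G(\rho_t)^2\Big)\d t .
\]
The right-hand side is finite because the action is finite. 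Taking the infimum over $\boldsymbol\rho$ yields $v_0(\mu)\le \cE^T_G(\mu,\nu)+v_0(\nu)$, and then taking the infimum over $\nu$ gives $v_0(\mu)\le\inf_{\nu\in\cT_T(\mu)}\{\cdots\}$.

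\emph{Step 2 (attainment at $\eta_T$).} Since $\eta$ is a curve of maximal slope for $v_0$ with respect to $G$, \eqref{eq: maximal slope} gives $-(v_0\circ\eta)'=|\eta'|^2=G(\eta_t)^2$ a.e. Hence $-(v_0\circ\eta)'=\tfrac12|\eta'|^2+\tfrac12G(\eta_t)^2$. Integrating over $[0,T]$ requires $t\mapsto v_0(\eta_t)$ to be absolutely continuous on $[0,T]$. This holds because $\eta\in AC^2([0,T])$ and the upper-gradient inequality bounds the increments of $v_0\circ\eta$ by $\int G|\eta'|$, which is finite because $\int_0^T G(\eta_t)^2\,\d t=\int_0^T|\eta'|^2\,\d t<\infty$. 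Integration then gives
\[
v_0(\mu)-v_0(\eta_T)=\int_0^T\Big(\tfrac12|\eta'|^2+\tfrac12G(\eta_t)^2\Big)\d t\ge \cE^T_G(\mu,\eta_T).
\]
Moreover $\eta_T\in\cT_T(\mu)$: by Proposition~\ref{prop: minimal velocity}, the minimal velocity of $\eta$ on $[0,T]$ solves the continuity equation \eqref{eq:continuity_eq}. Therefore the infimum is at most $\cE^T_G(\mu,\eta_T)+v_0(\eta_T)\le v_0(\mu)$. Combining this with Step 1 gives equality, and shows that the infimum is attained at $\nu=\eta_T$.

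\emph{Main obstacle.} The delicate point is the rigorous justification that $G$ is a strong upper gradient along arbitrary $AC^2$ curves, which is needed in Step 1 and also for the absolute continuity of $v_0\circ\eta$ in Step 2. This requires the identification $G=|\partial v_0|$ from the HJB equation and the standing assumptions of Proposition~\ref{prop: minimizer is ms}: lower semicontinuity and local Lipschitz continuity of $v_0$ on sublevel sets. I would simply invoke that proposition. There is one more mismatch to check: competitors in $\cE^T_G$ are arbitrary $AC^2$ curves, while $\cT_T(\mu)$ is defined through admissible pairs. Proposition~\ref{prop: minimal velocity} shows that the two classes of endpoints coincide, so restricting $\nu$ to $\cT_T(\mu)$ loses nothing.
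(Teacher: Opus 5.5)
Your proposal is correct and follows the paper's proof essentially step for step. The strong-upper-gradient bound combined with Young's inequality gives $v_0(\mu)\le \cE^T_G(\mu,\nu)+v_0(\nu)$ for every competitor, and integrating the maximal-slope identity along $\eta$ gives $\cE^T_G(\mu,\eta_T)+v_0(\eta_T)\le v_0(\mu)$; your explicit check that $t\mapsto v_0(\eta_t)$ is absolutely continuous (because $G(\eta_\cdot)$ and $|\eta'|$ lie in $L^2(0,T)$) fills in a step the paper uses tacitly, and your concern about $\cT_T(\mu)$ is harmless, since the lower bound holds for every $\nu\in\cP_2(E)$ and only $\eta_T\in\cT_T(\mu)$ is needed.
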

\begin{proof}

Fix $\nu\in\mathcal P_2(E)$ and any curve $\rho\in AC_2([0,T])$ with
$\rho_0=\mu$ and $\rho_T=\nu$.
Since $G$ is a strong upper gradient, we have
$$
v_0(\mu)-v_0(\nu)\ \le\ \int_0^T G(\rho_t)\,|\rho'|(t)\,\d t\le
\int_0^T\Big(\tfrac12|\rho'|^2(t)+\tfrac12G(\rho_t)^2\Big)\,\d t.
$$
Taking the infimum over all such $\eta$ yields
\begin{equation}\label{eq:universal_bound}
v_0(\mu)\ \le\ v_0(\nu)+\mathcal E_G^T(\mu,\nu),
\qquad \forall\, \nu\in\mathcal P_2(E).
\end{equation}
In particular,
\begin{equation}\label{eq:inf_lowerbd}
v_0(\mu)\ \le\ \inf_{\nu}\Big\{v_0(\nu)+\mathcal E_G^T(\mu,\nu)\Big\}.
\end{equation}

Now let $\eta$ be a curve of maximal slope for $v_0$ with respect to $G$ such that $\eta_0=\mu$.
Since $\eta\in AC^2_{\mathrm{loc}}([0,\infty);\mathcal P_2(E))$,
its endpoint $\eta_T$ belongs to $\mathcal T_T(\mu)$.
Moreover, by the defining identity of a curve of maximal slope,
$$
-\frac{\d}{\d t}v_0(\eta_t)=|\eta'|^2(t)=G(\eta_t)^2
\qquad \text{for a.e. } t\in(0,T).
$$
Integrating from $0$ to $T$, we obtain
$$
v_0(\mu)-v_0(\eta_T)=\int_0^T G(\eta_t)^2\,\d t
= \int_0^T \Big(\frac12 |\eta'|^2(t)+\frac12 G(\eta_t)^2\Big)\,\d t .
$$
Therefore,
$$
\cE_G^T(\mu,\eta_T)
\le
\int_0^T \Big(\frac12 |\eta'|^2(t)+\frac12 G(\eta_t)^2\Big)\,dt
=
v_0(\mu)-v_0(\eta_T).
$$
Hence,
\begin{equation}\label{eq:inf_upperbd}
\inf_{\nu}\Big\{v_0(\nu)+\mathcal E_G^T(\mu,\nu)\Big\}
\le v_0(\eta_T)+\mathcal E_G^T(\mu,\eta_T)
\le v_0(\mu).
\end{equation}
Combining \eqref{eq:inf_lowerbd} and \eqref{eq:inf_upperbd} gives
$$
\inf_{\nu}\Big\{v_0(\nu)+\mathcal E_G^T(\mu,\nu)\Big\}=v_0(\mu).
$$
Moreover, \eqref{eq:inf_upperbd} shows that the infimum is achieved at $\nu=\eta_T$,
which is exactly \eqref{eq:perT_argmin}.
\end{proof}

\section{Early Stopping and Implicit Regularization }
\label{sec:4}
 It is widely observed and analyzed that overparametrized neural networks can overcome overfitting and generalize well \citep{ZhangBengioHardtRechtVinyals2017rethinking,BelkinHsuMaMandal2019doubledescent,neyshabur2017exploring}. One explanation is through implicit regularization of gradient-descent-based training algorithms, see~\cite{Neyshabur2014InSO,heiss2019implicit1}. It is shown that in the training process of large neural networks, by stopping early (compared to training for infinite time),  the trained parameters belong to a set of \enquote{regularized} local minima. In this section, we apply the previous McKean--Vlasov control formulation for gradient flows in overparametrized neural networks, and provide a theoretical analysis of this interesting phenomenon. 
 \subsection{Mean field formulation of neural network}
From now on, let $E:=\Theta:=\R\times\R^d\times\R$ denote the space of parameters, with coordinates $\theta:=(w,a, b)$.  In a one-hidden layer fully-connected neural network with $N$ hidden neurons, with nonlinear activation function $\sigma: \R\rightarrow \R$, the output of the neural network can be represented by
\begin{align*}
    f_N(x):=\frac{1}{N}\sum_{i=1}^N w_i\sigma(a_i\cdot x+b_i),\qquad x\in \R^d,
\end{align*}
where $\{\theta_i:=(w_i,a_i,b_i)\}_{i=1}^N$ denote the trainable parameters. When the neural network is wide enough, its output can be well-approximated by its mean field formulation.
With the same activation function $\sigma$, the mean field formulation of infinite neural network is defined as
\begin{align*}
     f_\mu(x):=\int_{\Theta} w\,\sigma(a\cdot x+b)\,\mu(\mathrm d w, \mathrm d a,\mathrm d b),\qquad x\in\R^d,
\end{align*}
where $\mu\in \cP_2(\Theta)$ is the trainable parameter.

For a given loss function $\tilde L:\R\rightarrow \R_{\ge0}$ and training data set $\{(x_i,y_i)\}_{i=1}^M$,  the empirical loss functional for the mean field neural network is
\begin{align*}
    L(\mu):= \frac{1}{M}\sum_{k=1}^M \tilde L(f_\mu(x_k)-\,y_k).
\end{align*}
Hence, $L$ is a function on $\cP_2(\Theta)$. Without loss of generality, we assume that $\min_{\mu\in \cP_2(\Theta)}L(\mu)=0$. A common choice of the loss function is the quadratic loss function.

In the setting of finite neural networks, the parameters are trained via gradient descent-based algorithms. In the mean field formulation, the analogous formulation is via gradient flow of the loss functional $L$.
\begin{assumption}\label{ass:basic_NN}
\begin{itemize}
    \item[(i)] $\sigma,\sigma'$ are bounded and $\sigma$ is Lipschitz, i.e. there exists a constant $S_0\ge 0$ such that
$$
|\sigma(z)-\sigma(z')|\le S_0|z-z'|,\quad |\sigma(z)|\le S_0,\quad|\sigma'(z)|\le S_0, \quad \forall \, z,z'\in \R.
$$
\item[(ii)] $R:=\max_{1\le k\le M}\|x_k\|<\infty$.
\item[(iii)] $\tilde L:\R\to\R_{\ge 0}\,\in \cC^1(\R)$ and is convex. There exists a constant $\beta>0$ such that
$$
|\tilde L'(r)-\tilde L'(s)|\le \beta|r-s|.\quad \forall\,r,s\in \R.
$$
\item[(iv)] $\min_{r\in \R} \tilde L(r)=0$.
\end{itemize}
\end{assumption}

\begin{remark}
    With the above assumptions, for every $r\in \R$, $(\tilde L'(r))^2 \le 2\beta \ \tilde L(r)$.
\end{remark}
\begin{remark}
    The boundedness assumption on the activation function $\sigma$ can be relaxed to linear growth condition, as long as we assume that $\mu$ has finite fourth moment, i.e. $\mu\in \cP_4(\Theta)$.
\end{remark}
\begin{lemma}
\label{lem: G_property}
    Under Assumption~\ref{ass:basic_NN}, there exists a constant $C_G>0$, depending on all the coefficients in Assumption~\ref{ass:basic_NN}, such that
    $$
    G^2(\mu)\le C_G\ L(\mu)\ (1+m_2(\mu)),\qquad \forall\,\mu\in \cP_2(\Theta).
    $$
    Moreover, the functional $G^2:\mathcal P_2(\Theta)\to[0,\infty]$ is lower semicontinuous with respect to $\cW_2$, i.e. for any $\mu_n\to\mu$ in $\cW_2$, $$
G(\mu)^2\ \le\ \liminf_{n\to\infty} G(\mu_n)^2.$$
\end{lemma}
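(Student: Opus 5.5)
The plan is to work with the explicit form of $G$ in the neural-network setting. Throughout I take $v_0=V=L$, as in Theorem~\ref{thm:controlformulation} with $\eps=0$. Then \eqref{eq:ell} gives $\ell_0-v_0=\tfrac12\int\|\nabla_\theta\partial_\mu L(\mu)\|^2\d\mu$, so by \eqref{eq:G}
\[
G^2(\mu)=\int_\Theta \big\|\nabla_\theta \partial_\mu L(\mu)(\theta)\big\|^2\,\mu(\d\theta).
\]
Both claims then reduce to computing this Lions derivative and estimating it.

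\textbf{Step 1 (the derivative).} Since $f_\mu(x)$ is linear in $\mu$, the chain rule gives
\[
\partial_\mu L(\mu)(\theta)=\frac1M\sum_{k=1}^M c_k(\mu)\, w\,\sigma(a\cdot x_k+b),
\qquad c_k(\mu):=\tilde L'\big(f_\mu(x_k)-y_k\big).
\]
I then differentiate in $\theta=(w,a,b)$. This gives
\[
\nabla_\theta\partial_\mu L(\mu)(\theta)=\frac1M\sum_k c_k(\mu)\,g_k(\theta),
\qquad
g_k(\theta):=\big(\sigma(z_k),\; w\sigma'(z_k)x_k,\; w\sigma'(z_k)\big),\quad z_k=a\cdot x_k+b.
\]
By Assumption~\ref{ass:basic_NN}(i),(ii), each $g_k$ is continuous and satisfies $\|g_k(\theta)\|^2\le S_0^2\big(1+(R^2+1)w^2\big)$.

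\textbf{Step 2 (the bound).} First apply Jensen/Cauchy--Schwarz to the average over $k$:
\[
\Big\|\tfrac1M\sum_k c_k g_k\Big\|^2\le \tfrac1M\sum_k c_k^2\|g_k\|^2 .
\]
Next use the remark $(\tilde L'(r))^2\le 2\beta\tilde L(r)$. Integrating against $\mu$ then yields
\[
G^2(\mu)\le 2\beta S_0^2\,\frac1M\sum_k\tilde L\big(f_\mu(x_k)-y_k\big)\,\big(1+(R^2+1)m_2(\mu)\big)\le C_G\,L(\mu)\,(1+m_2(\mu)),
\]
with $C_G=2\beta S_0^2(R^2+2)$.

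\textbf{Step 3 (semicontinuity).} Let $\mu_n\to\mu$ in $\cW_2$. The map $\theta\mapsto w\sigma(a\cdot x_k+b)$ is continuous with $|w\sigma|\le S_0|w|$, which is of at most quadratic growth. Since $\cW_2$-convergence is equivalent to weak convergence plus convergence of second moments, it follows that $f_{\mu_n}(x_k)\to f_\mu(x_k)$. Continuity of $\tilde L'$ then gives $c_k(\mu_n)\to c_k(\mu)$.

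Set $h_n(\theta):=\|\tfrac1M\sum_k c_k(\mu_n)g_k(\theta)\|^2$ and define $h$ in the same way with $c_k(\mu)$. Then:
\begin{itemize}
\item $h_n\to h$ uniformly on compacts;
\item more precisely, $|h_n-h|\le \delta_n\,(1+w^2)$ with $\delta_n\to0$.
\end{itemize}
Hence $\big|\int h_n\d\mu_n-\int h\d\mu_n\big|\le\delta_n(1+m_2(\mu_n))\to0$, because $m_2(\mu_n)$ is bounded. Finally, $h$ is continuous with $0\le h\le C(1+w^2)$, so $\int h\,\d\mu_n\to\int h\,\d\mu$ by $\cW_2$-convergence. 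This gives $G^2(\mu_n)\to G^2(\mu)$, which is in fact continuity and in particular the claimed lower semicontinuity.

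The main obstacle I expect is the identification of $G^2$ with $\|\nabla_W L\|^2_{L^2(\mu)}$. This requires $L\in\cC^1(\cP_2)\cap\cC_{quad}$ so that Theorem~\ref{thm:controlformulation} applies. It also requires justifying differentiation under the integral, which is routine given the bounded $\sigma,\sigma'$. The remaining difficulty is handling the unbounded factor $w$ in Step 3, and the uniform second-moment bound controls exactly that.
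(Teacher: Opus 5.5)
Your proof is correct. Steps 1--2 follow the paper's growth argument. Step 3 reaches the semicontinuity by a different route that yields more than the statement asks.

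\textbf{Growth bound.} The paper also writes $G^2(\mu)=\int\|\nabla_\theta\partial_\mu L(\mu)\|^2\,\d\mu$, with the same vectors $g_k$ (called $v_k$ there). It bounds $\|g_k\|^2$ the same way and uses $(\tilde L')^2\le 2\beta\tilde L$. The only difference is that it applies Cauchy--Schwarz across $k$ where you apply Jensen, which changes only the constant.

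\textbf{Semicontinuity.} The paper uses only that $\nabla_\theta\partial_\mu L(\mu_n)\to\nabla_\theta\partial_\mu L(\mu)$ locally uniformly in $\theta$. It takes optimal couplings $(\Theta_n,\Theta)$ of $(\mu_n,\mu)$, extracts an a.s.\ convergent subsequence of $\nabla_\theta\partial_\mu L(\mu_n)(\Theta_n)$, and concludes with Fatou. You instead make the convergence quantitative, $|h_n-h|\le\delta_n(1+w^2)$, and combine it with the characterization of $\cW_2$-convergence by continuous test functions of quadratic growth.

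Your route relies on the quadratic domination $h\le C(1+w^2)$ from Step 2. The Fatou route does not need it: nonnegativity suffices, so it would survive faster-growing integrands where only lower semicontinuity can be expected. In exchange, your route gives full $\cW_2$-continuity of $G^2$. It also avoids subsequence bookkeeping; as written, the paper's Fatou step should first pass to a subsequence realizing the $\liminf$.

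\textbf{Shared caveat.} Both arguments need $\sigma'$ to be continuous, i.e.\ $\sigma\in C^1$. You need it for continuity of $g_k$; the paper needs it for the a.s.\ convergence of $g(\mu_n,\Theta_n)$. Assumption~\ref{ass:basic_NN}(i) does not literally state this. For a kinked activation such as the clipped ReLU of Remark~\ref{rem:bounded-approx-relu}, your continuity claim is false. To see this, take a single atom $\delta_{\theta_n}$ approaching a kink from the side on which $\sigma'$ differs from its value at the kink, with nonzero residual. So you should state $\sigma\in C^1$ as an explicit hypothesis rather than attribute continuity of $g_k$ to (i) and (ii).
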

\begin{corollary}
\label{cor: cEG_lowersemicon}
    For every $\mu\in \cP_2(\Theta)$, the map $\nu\mapsto \cE_G^T(\mu,\nu)$ is lower semicontinuous in $\cW_2$.
\end{corollary}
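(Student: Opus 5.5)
We prove Corollary~\ref{cor: cEG_lowersemicon} by the direct method: compactness of near-optimal curves, then lower semicontinuity of each piece of the action along the limit curve. Fix $\mu$ and $T>0$, and let $\nu_n\to\nu$ in $\cW_2$. Set $c:=\liminf_n \cE_G^T(\mu,\nu_n)$. If $c=+\infty$ there is nothing to prove, so we pass to a subsequence realizing the liminf with $\cE_G^T(\mu,\nu_n)\le c+1<\infty$. For each $n$ we choose a curve $\eta^n\in AC_2([0,T];\cP_2(\Theta))$ with $\eta^n_0=\mu$, $\eta^n_T=\nu_n$ and
\[
\int_0^T\big(\tfrac12|(\eta^n)'|^2+\tfrac12 G(\eta^n_t)^2\big)\,\d t\le \cE_G^T(\mu,\nu_n)+\tfrac1n .
\]
The goal is to extract a limit curve $\eta$ joining $\mu$ to $\nu$ whose action is at most $c$. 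Since $\eta$ is admissible for $\cE_G^T(\mu,\nu)$, this gives $\cE_G^T(\mu,\nu)\le c$.

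\emph{Compactness.} The kinetic parts are uniformly bounded, $\int_0^T|(\eta^n)'|^2\,\d t\le 2(c+2)$. By Cauchy--Schwarz, $\cW_2(\eta^n_s,\eta^n_t)\le \sqrt{2(c+2)}\,|t-s|^{1/2}$, so the curves are equi-H\"older. In particular $\cW_2(\mu,\eta^n_t)\le \sqrt{2(c+2)T}$, which bounds the second moments $m_2(\eta^n_t)$ uniformly in $n$ and $t$. Bounded second moments give tightness, hence relative compactness in the narrow topology, but not in $\cW_2$. We therefore apply the refined Arzel\`a--Ascoli theorem of \cite[Prop.~3.3.1]{ambrosioGradientFlowsMetric2008} with the weak topology and the l.s.c.\ metric $\cW_2$. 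This yields a subsequence and a limit curve $\eta$ with $\eta^n_t\rightharpoonup\eta_t$ narrowly for every $t\in[0,T]$. The limit satisfies $\eta_0=\mu$, and $\eta_T=\nu$ because $\nu_n\to\nu$. By the l.s.c.\ of $\cW_2$ under narrow convergence, $\eta$ is absolutely continuous, and the standard estimate
\[
\int_0^T|\eta'|^2\,\d t\le\liminf_n\int_0^T|(\eta^n)'|^2\,\d t
\]
holds (\cite[Thm.~3.3.1 / Lemma 3.3.2-type]{ambrosioGradientFlowsMetric2008}). In particular $\eta\in AC_2$ with values in $\cP_2$.

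\emph{The potential term; main obstacle.} We need $\int_0^T G(\eta_t)^2\,\d t\le\liminf_n\int_0^T G(\eta^n_t)^2\,\d t$. By Fatou's lemma this follows from pointwise lower semicontinuity $G(\eta_t)^2\le\liminf_n G(\eta^n_t)^2$ for each $t$. However, Lemma~\ref{lem: G_property} gives lower semicontinuity only along $\cW_2$-convergent sequences, while we have only narrow convergence with bounded second moments. This gap is the main difficulty. Two routes are available:
\begin{enumerate}
\item Revisit the proof of Lemma~\ref{lem: G_property} and check that it only uses convergence of $f_{\eta^n_t}(x_k)$ and of integrals of functions of quadratic growth. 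Under Assumption~\ref{ass:basic_NN}, $\theta\mapsto w\sigma(a\cdot x_k+b)$ has linear growth, so with bounded second moments these integrals pass to the limit under narrow convergence by uniform integrability of the first moments. The squared-gradient part then behaves like a lower semicontinuous $L^2$ norm. In that case the lemma's lower semicontinuity holds for narrow convergence on sets of bounded second moment.
\item If that fails, upgrade to $\cW_2$ convergence by obtaining uniform integrability of second moments. One option is to use that each $\eta^n$ can be chosen with minimal-velocity fields whose $L^2$ energy controls the transport of mass from the fixed initial law $\mu$, which gives a uniform tail bound via $\int |x|^2\,\d\eta^n_t$ propagated from $\mu$. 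Even so, this only controls $\cW_2$ distances to $\mu$ and not tails, so it may fall short.
\end{enumerate}
I would pursue the first route.

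\emph{Conclusion.} Combining the two lower semicontinuity estimates with the near-optimality of $\eta^n$ gives
\[
\cE_G^T(\mu,\nu)\le\int_0^T\big(\tfrac12|\eta'|^2+\tfrac12 G(\eta_t)^2\big)\,\d t\le\liminf_n\cE_G^T(\mu,\nu_n)=c,
\]
which is the claimed lower semicontinuity of $\nu\mapsto\cE_G^T(\mu,\nu)$.
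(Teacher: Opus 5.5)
Your argument follows the paper's skeleton: near-optimal curves $\eta^n$, equicontinuity from the kinetic bound, Arzel\`a--Ascoli, lower semicontinuity of the kinetic term, and Fatou for the $G^2$ term. It differs at the compactness step, and there your version is the sound one. The paper asserts that uniformly bounded second moments make $\{\eta^n_t\}$ relatively compact in $(\cP_2(\Theta),\cW_2)$, and then applies the $\cW_2$-lower semicontinuity of Lemma~\ref{lem: G_property}. As you observe, bounded second moments give only narrow compactness. For example, $(1-n^{-2})\delta_0+n^{-2}\delta_{(n,0,0)}$ has $m_2=1$ and converges narrowly to $\delta_0$, but not in $\cW_2$. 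The paper's step would therefore need uniform integrability of second moments, which it never establishes.

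Your use of the refined Arzel\`a--Ascoli theorem in the narrow topology, with $\cW_2$ as a narrowly l.s.c.\ metric, avoids this. The price is that you need lower semicontinuity of $G^2$ along narrowly convergent sequences with bounded second moments, which Lemma~\ref{lem: G_property} does not state. Your route 1 does deliver it:
\begin{enumerate}
\item Uniform integrability of $|w|$ gives $f_{\mu_n}(x_k)\to f_\mu(x_k)$.
\item Hence the coefficients $\tilde L'(f_{\mu_n}(x_k)-y_k)$ converge, and $g(\mu_n,\cdot)\to g(\mu,\cdot)$ locally uniformly.
\item Truncate the nonnegative integrand $\|g(\mu_n,\cdot)\|^2$ with a continuous spatial cutoff and a cap $K$, pass to the limit by narrow convergence, then let the cutoff and $K$ go to infinity. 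This gives $\liminf_n G^2(\mu_n)\ge G^2(\mu)$.
\end{enumerate}

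Two cautions on wording and assumptions. First, integrals of quadratic-growth functions do \emph{not} pass to the limit under narrow convergence with merely bounded second moments. Only the one-sided inequality for the nonnegative squared term holds, and that is all you need. Second, continuity of $\theta\mapsto g(\mu,\theta)$ requires $\sigma'$ to be continuous; the paper's own proof of Lemma~\ref{lem: G_property} uses this implicitly as well.

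The remaining steps are fine as you stated them. Lower semicontinuity of the kinetic energy under pointwise narrow convergence follows from weak $L^2$ compactness of $|(\eta^n)'|$ together with narrow lower semicontinuity of $\cW_2$. The endpoint identification $\eta_T=\nu$ holds because $\cW_2$-convergence of $\nu_n$ implies narrow convergence.
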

The proofs are postponed to  Appendix~\ref{apx:proofs}.

\begin{remark}[Bounded approximation of ReLU]
\label{rem:bounded-approx-relu}
For simplicity we assume bounded activations, and extensions to ReLU require moment conditions.
To retain the quadratic-growth framework on $\cP_2(\Theta)$,
we can work with a bounded approximation of ReLU.
A convenient choice is the \emph{clipped ReLU}
$$
\sigma_M(z):=\min\{M,\max\{0,z\}\},\qquad M>0,
$$
which is bounded by $M$ and $1$--Lipschitz.
All results stated under bounded activation apply to $\sigma_M$ for each fixed $M$.
Moreover, $\sigma_M(z)\uparrow \max\{0,z\}$ pointwise as $M\to\infty$.
\end{remark}

\begin{definition}
    For every given initialization $\mu\in \cP_2(\Theta)$, every training dynamic of $L$ starting from $\mu$ is a  curve of maximal slope of $L$ in~Definition~\ref{def: maximal slope}. We also call it the gradient flow of $L$.
\end{definition}

From Theorem~\ref{thm:controlformulation}, for every $\eps\ge0$, the loss functional $L$ admits a McKean--Vlasov control formulation~\eqref{eq:value function}, with running cost identified in~\eqref{eq:ell}.

\subsection{Implicit regularization under infinite training time}
In this section, we aim to provide a mathematical formulation and solution of the open question: 
\begin{quote}
    \textit{Will implicit regularization vanish under infinite training time horizon? In other words, if the gradient descent algorithm converges, does it converge to the global minimizer with minimal \enquote{complexity}?}
\end{quote}

\begin{theorem}[Nonvanishing selection]
\label{thm: nonvanishing_reg}
    If a gradient flow of $L$ converges to $\mu_{\infty}$ in $\cW_2$ as $T\to \infty$, with initial distribution $\mu$, and $\mu_{\infty}\in \arg\min_{\nu\in \cP_2(\Theta)}L(\nu)$, then 
    \begin{align}   \label{eq:selection_claim}\mu_\infty\in\arg\min\Big\{d_G(\mu,\nu):\ \nu\in\arg\min L\Big\}.
    \end{align}
\end{theorem}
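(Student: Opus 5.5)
The plan is to compare two quantities. For every global minimizer $\nu$ of $L$ we show $d_G(\mu,\nu)\ge L(\mu)$, and for the limit $\mu_\infty$ of the gradient flow we show $d_G(\mu,\mu_\infty)\le L(\mu)$. Together these give \eqref{eq:selection_claim}.

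Throughout we work with $\eps=0$. By Theorem~\ref{thm:controlformulation} with $V=L$, the value function satisfies $v_0\equiv L$. By \eqref{eq:ell} and \eqref{eq:G}, $G(\nu)^2=2(\ell_0(\nu)-L(\nu))=\int\|\nabla_x\partial_\mu L(\nu)\|^2\d\nu$. As recalled after \eqref{eq:G} (Proposition~\ref{prop: minimizer is ms}), $G$ is a strong upper gradient of $v_0=L$. We identify the training dynamics (curves of maximal slope of $L$) with curves of maximal slope with respect to $G$. This is the one point to check carefully: $G$ should coincide with the slope $|\partial L|$ via the Lions-derivative chain rule, so that Remark a.\ on (Cal) applies.

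\emph{Lower bound.} Let $\nu\in\arg\min L$, so $L(\nu)=0$. Take any curve $\boldsymbol\rho\in\cA_{1,0}(\mu,\nu)$. The strong upper gradient property (Definition~\ref{def:upper gradient}) gives
\[
L(\mu)=L(\mu)-L(\nu)\le\int_0^1|\rho'|(t)\,G(\rho_t)\,\d t .
\]
Taking the infimum in \eqref{eq:finslerdistance} yields $L(\mu)\le d_G(\mu,\nu)$.

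\emph{Upper bound.} Let $\eta$ be the gradient flow with $\eta_0=\mu$ and $\eta_T\to\mu_\infty$ in $\cW_2$. The energy identity \eqref{eq: maximal slope} gives, for every $T$,
\[
\int_0^T|\eta'|(t)\,G(\eta_t)\,\d t=\int_0^T G(\eta_t)^2\,\d t=L(\mu)-L(\eta_T)\le L(\mu).
\]
Letting $T\to\infty$ gives $\int_0^\infty|\eta'|G\,\d t\le L(\mu)<\infty$.

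Next we turn $\eta$ into an admissible competitor on $[0,1]$ joining $\mu$ to $\mu_\infty$. Define $\tilde\eta_s:=\eta_{s/(1-s)}$ for $s\in[0,1)$ and $\tilde\eta_1:=\mu_\infty$. This curve is $\cW_2$-continuous on $[0,1]$, using the assumed convergence at $s=1$. The weighted length $\int|\tilde\eta'|G$ is invariant under this monotone time change, so it is still at most $L(\mu)$.

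However, $\tilde\eta$ need not lie in $AC^2([0,1])$. To fix this we apply Lemma~\ref{lem:reparametrization} with $\omega=G$ on $[0,t]$ for each finite $t$ and pass to the limit $t\to\infty$. This yields an $AC^2$ curve on $[0,\Lambda]$ with
\[
\int|\cdot'|^2=\int|\eta'|G\le L(\mu).
\]
A final linear rescaling to $[0,1]$ leaves $\int|\cdot'|G$ unchanged. We then conclude $d_G(\mu,\mu_\infty)\le L(\mu)$.

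The main obstacle is this last step. Three things must be handled at once: the endpoint at $t=\infty$; the degeneracy where $G(\eta_t)=0$ or $\Lambda$ may be infinite; and the requirement that the limiting curve actually belong to $\cA_{1,0}(\mu,\mu_\infty)$ with a velocity solving the continuity equation. If the direct reparametrization is delicate, the fallback is the triangle inequality of Proposition~\ref{prop:dg-basic}(ii):
\[
d_G(\mu,\mu_\infty)\le d_G(\mu,\eta_T)+d_G(\eta_T,\mu_\infty).
\]
Here $d_G(\mu,\eta_T)\le L(\mu)$ by the finite-horizon argument. The remainder $d_G(\eta_T,\mu_\infty)$ is bounded by the tail $\int_T^\infty|\eta'|G\,\d t\to0$, using the same reparametrization of the tail segment.

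Combining both bounds, $d_G(\mu,\mu_\infty)\le L(\mu)\le d_G(\mu,\nu)$ for all $\nu\in\arg\min L$. Since $\mu_\infty\in\arg\min L$ by hypothesis, this proves \eqref{eq:selection_claim}.
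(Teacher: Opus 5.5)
Your overall strategy is sound and differs from the paper's, but the step that closes the upper bound $d_G(\mu,\mu_\infty)\le L(\mu)$ fails as you justify it.

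Along a curve of maximal slope, $|\eta'|(t)=G(\eta_t)$ a.e. So Lemma~\ref{lem:reparametrization} with $\omega=G$ leaves the parametrization essentially unchanged, and $\Lambda=\infty$ in general. To obtain an admissible curve on a compact interval ending at $\mu_\infty$ you must compactify time. That yields an absolutely continuous curve (and, after arc-length reparametrization, an $AC^2$ one) only if the $\cW_2$-length $\int_0^\infty G(\eta_t)\,\d t$ is finite. Finiteness follows neither from $\int_0^\infty G(\eta_t)^2\,\d t\le L(\mu)$ nor from $\cW_2$-convergence of $\eta_t$.

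Your fallback has the same defect. The tail $(\eta_t)_{t\ge T}$ is an admissible competitor for $d_G(\eta_T,\mu_\infty)$ only under the same finite-length condition, so the bound $d_G(\eta_T,\mu_\infty)\le\int_T^\infty|\eta'|G\,\d t$ is unjustified.

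The repair is to drop the flow on the tail and join $\eta_T$ to $\mu_\infty$ by a constant-speed $\cW_2$-geodesic $\gamma$ instead.
\begin{itemize}
\item Along $\gamma$, $m_2(\gamma_s)\le(1-s)m_2(\eta_T)+s\,m_2(\mu_\infty)$, which is bounded uniformly for large $T$.
\item $L$ is bounded on such moment balls, since $|f_\nu(x_k)|\le S_0\sqrt{m_2(\nu)}$ and $\tilde L$ is continuous.
\item Lemma~\ref{lem: G_property} then gives $\sup_s G(\gamma_s)\le C$, hence $d_G(\eta_T,\mu_\infty)\le C\,\cW_2(\eta_T,\mu_\infty)$.
\end{itemize}
Combining this with Proposition~\ref{prop:dg-basic}(ii) and the linear rescaling $s\mapsto\eta_{Ts}$ on the finite segment gives
\[
d_G(\mu,\mu_\infty)\le L(\mu)-L(\eta_T)+C\,\cW_2(\eta_T,\mu_\infty)\xrightarrow[T\to\infty]{} L(\mu)-L(\mu_\infty)=L(\mu).
\]
With this replacement your proof is complete.

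\textbf{Comparison with the paper.} The paper first shows $G(\mu_\infty)=0$. It then uses the finite-horizon endpoint principle (Proposition~\ref{prop:endpoint_MS}) to get $\mathcal E_G^T(\mu,\mu_T)\le\mathcal E_G^T(\mu,\bar\mu)$ for every minimizer $\bar\mu$. Finally it sends $T\to\infty$ using two facts: lower semicontinuity of $\mathcal E_G^T(\mu,\cdot)$ (Corollary~\ref{cor: cEG_lowersemicon}), and the monotone convergence $\mathcal E_G^T\downarrow d_G$ at zeros of $G$ (Lemma~\ref{lem:energy_conv}).

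That route presents the infinite-time result as the limit of the early-stopping selection, which is the paper's narrative. However, it must interchange the horizon $T$ with the flow time, and that is the delicate step there.

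Your calibration argument shows $d_G(\mu,\nu)\ge|L(\mu)-L(\nu)|$ for every $\nu$, with equality at $\nu=\mu_\infty$. It avoids $\mathcal E_G^T$, the identity $G(\mu_\infty)=0$, and the lower semicontinuity of $G$ altogether. It also gives a sharper conclusion: the minimal value in \eqref{eq:selection_claim} equals $L(\mu)-\min L$.
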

\begin{remark}
    Theorem~\ref{thm: nonvanishing_reg} indicates that, even without early stopping, under the mean field formulation of neural networks, if the gradient flow converges, it converges to a global minimizer of the loss function $L$ which minimizes the \emph{weighted} distance with respect to the initial distribution. The \emph{weight} only depends on the choice of the loss function $L$.
\end{remark}

\begin{remark}
    From the result in \cite{chizatbach}, we have that, if $$
    \Phi(\theta,x):=w\sigma(a\cdot x+b)$$
    is 2-homogeneous (i.e. $\Phi(\lambda\theta)=\lambda^2\Phi(\theta)$ for any $\lambda>0$), the initial distribution $\mu$ has full support on $\Theta$, and the gradient flow converges to a limit measure, then the limit must be a global minimizer.
\end{remark}
To prove Theorem~\ref{thm: nonvanishing_reg}, we first state the following lemma, which can be regarded as the limiting behavior of $\cE_G^T$ in~\eqref{eq: finsler_energy} as $T\to\infty$.

\begin{lemma}
\label{lem:energy_conv}
    For every $\nu\in\cP_2(\Theta)$ such that $G(\nu)=0$, we have 
    \begin{equation}
    \label{eq:cE_converge}
         \mathcal E_G^T(\mu,\nu)\downarrow d_G(\mu,\nu)\quad\text{as }\, T\to\infty, \qquad \forall\,\mu\in \cP_2(\Theta).
    \end{equation}
\end{lemma}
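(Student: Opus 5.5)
The plan has two parts: show that $T\mapsto \mathcal E_G^T(\mu,\nu)$ is nonincreasing, and identify its limit with $d_G(\mu,\nu)$. The limit identification will use Proposition~\ref{prop:dg-basic}(iii), $d_G(\mu,\nu)=\inf_{T>0}\mathcal E_G^T(\mu,\nu)$. Once monotonicity is known, the infimum over $T$ equals the limit as $T\to\infty$, and the convergence is monotone from above, which is exactly \eqref{eq:cE_converge}.

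\textbf{Monotonicity.} Fix $T<T'$ and an admissible curve $\eta\in AC^2([0,T];\mathcal P_2(\Theta))$ with $\eta_0=\mu$ and $\eta_T=\nu$. Extend it to $[0,T']$ by setting $\tilde\eta_t:=\nu$ for $t\in[T,T']$. The extended curve is still in $AC^2([0,T'])$ and has the correct endpoints. On $[T,T']$ its metric derivative is zero, and $G(\tilde\eta_t)=G(\nu)=0$ by assumption. Hence the action of $\tilde\eta$ on $[0,T']$ equals the action of $\eta$ on $[0,T]$. Taking the infimum over $\eta$ gives $\mathcal E_G^{T'}(\mu,\nu)\le \mathcal E_G^T(\mu,\nu)$.

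This is the only place the hypothesis $G(\nu)=0$ is used. Without it, the idle segment would cost $\tfrac12 (T'-T)G(\nu)^2$, and monotonicity would fail.

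\textbf{Limit.} Monotonicity gives $\lim_{T\to\infty}\mathcal E_G^T=\inf_{T>0}\mathcal E_G^T$, and Proposition~\ref{prop:dg-basic}(iii) identifies this infimum with $d_G(\mu,\nu)$. This completes the argument, including the case where both sides are $+\infty$.

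\textbf{Main obstacle.} The only real content sits in Proposition~\ref{prop:dg-basic}(iii), which I am taking as given. If I had to reprove it, I would argue in two directions.

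For $d_G\le \mathcal E_G^T$, apply Young's inequality $|\eta'|G\le \tfrac12|\eta'|^2+\tfrac12 G^2$ and use the fact that the Finsler length $\int|\eta'|G$ is invariant under reparametrization onto $[0,1]$.

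For the reverse inequality, take a near-optimal curve for $d_G$. Reparametrize it with Lemma~\ref{lem:reparametrization} using $\omega=G$, so that $|\eta'|=G(\eta)$ a.e. Then the action equals the Finsler length $\int|\eta'|G$. The horizon $L$ produced by the reparametrization may be arbitrary, possibly long, and the monotonicity step above lets us pass to any larger horizon at no extra cost.

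The delicate point is the set where $G=0$ along the curve, where the reparametrization degenerates. I would handle it by using $\omega=G+\delta$ and letting $\delta\downarrow0$. With this choice, $\int|\eta'|^2=\int|\eta'|(G+\delta)$ and $\int G^2\le\int|\eta'|(G+\delta)$, so the action is at most $\int|\eta'|G+\delta\int|\eta'|$. The error term $\delta\int|\eta'|$ vanishes provided the $\mathcal W_2$-length of the curve is finite, which holds after a preliminary constant-speed reparametrization.
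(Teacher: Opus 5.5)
Your proposal is correct and is essentially the paper's argument. The paper also extends a (near-)optimal curve by the constant $\nu$, which costs nothing because $G(\nu)=0$; this gives both monotonicity in $T$ and the upper bound, and Proposition~\ref{prop:dg-basic}(iii) together with the Young-inequality bound $d_G\le\mathcal E_G^T$ then identifies the limit. Your packaging via $\lim_{T\to\infty}\mathcal E_G^T=\inf_{T>0}\mathcal E_G^T$ is a slightly more streamlined version of the same proof.
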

\begin{proof}
    For every curve $\eta\in \cA_{T,0}(\mu,\nu)$, we have
    $$
    \int_0^T G(\eta_t)\,|\eta'|(t)\,\d t
\le
\int_0^T\Big(\tfrac12|\eta'|^2(t)+\tfrac12G(\eta_t)^2\Big)\,\d t.
    $$
    Taking the infimum over $\cA_{T,0}(\mu,\nu)$ yields
    $$
    \inf_{\eta\in \cA_{T,0}(\mu,\nu)}\int_0^T G(\eta_t)\,|\eta'|(t)\,\d t\ \le\ \mathcal E_G^T(\mu,\nu).
    $$
    Using Lemma~\ref{lem:reparametrization}, we reparametrize $\eta\in \cA_{T,0}(\mu,\nu)$ to $\tilde\eta\in \cA_{1,0}(\mu,\nu)$ via $\tilde\eta_s:=\eta_{Ts}$ for $s\in [0,1]$.
    Then
    $$
    \int_0^T G(\eta_t)|\eta'|(t)\,\d t
=
\int_0^1 G(\tilde\eta_s)\,|\tilde\eta'|(s)\,\d s.
    $$
    Taking the infimum on the left hand side, we have
    \begin{equation}
    \label{eq:lem:lower_bd}
        d_G(\mu,\nu)\le \cE^T_G(\mu,\nu),\qquad \forall\,T>0.
    \end{equation}
    Fix $\varepsilon>0$. From Proposition~\ref{prop:dg-basic}(iii), there exist $S>0$ and a curve $\gamma\in \cA_{S,0}(\mu,\nu)$ such that
    $$
\int_0^S\Big(\tfrac12|\gamma'|^2(t)+\tfrac12G(\gamma_t)^2\Big)\,\d t
\le d_G(\mu,\nu)+\varepsilon.
$$
For any $T\ge S$, define the concatenation
$$
\eta^T_t=
\begin{cases}
\gamma_t, & 0\le t\le S,\\
\nu, & S<t\le T.
\end{cases}
$$
Since $\eta^T$ is constant on $(S,T]$, we have $|(\eta^T)'|(t)=0$ there. Moreover $G(\nu)=0$
implies $G(\eta^T_t)=0$ on $(S,T]$. Then, for all $T\ge S$,
$$
\cE_G^T(\mu,\nu)\le\int_0^T\Big(\tfrac12|(\eta^T)'|^2(t)+\tfrac12G(\eta^T_t)^2\Big)\,\d t
=
\int_0^S\Big(\tfrac12|\gamma'|^2(t)+\tfrac12G(\gamma_t)^2\Big)\,\d t
\le d_G(\mu,\nu)+\varepsilon.
$$
Hence
$\limsup_{T\to\infty}\cE_G^T(\mu,\nu)\le d_G(\mu,\nu)$. This construction also indicates that, for every $0<T_1< T_2$, we have
$\cE_G^{T_2}(\mu,\nu)\ \le\ \cE_G^{T_1}(\mu,\nu)$.
Combined with the fact that $d_G(\mu,\nu)\le \mathcal E_G^T(\mu,\nu)$ for all $T>0$, we conclude.
\end{proof}

\begin{proof}[Proof of Theorem~\ref{thm: nonvanishing_reg}]
    We first show that $G(\mu_{\infty})=0$ and $\mu_{\infty}\in \arg\min L$. In Proposition~\ref{prop:endpoint_MS}, taking $v_0=L$, we have, for every $T>0$,
    $$
    L(\mu)-L(\mu_T)=\int_0^T G(\mu_t)^2\,\d t,
    $$
    where we used $\|\alpha_t\|_{L^2(\mu_t)}=|\partial L|(\mu_t)=G(\mu_t)$, and $G$ is an upper gradient of $L$. Since $L$ is nonnegative and continuous, taking $T\to \infty$ yields   \begin{equation}\label{eq:G2_integrable_thm}
\int_0^\infty G(\mu_t)^2\,\d t=L(\mu)-\lim_{T\to\infty}L(\mu_T)<\infty.
\end{equation}
Hence there exists a sequence $T_n\to\infty$ such that $G(\mu_{T_n})\to0$. Since $\cW_2(\mu_{T_n},\mu_{\infty
})\to 0$ as $n\to\infty$, together with the lower semicontinuity of $G$ from Lemma~\ref{lem: G_property}, we obtain
$$
G(\mu_\infty)\le\liminf_{n\to\infty}G(\mu_{T_n})=0,
$$
which implies that $G(\mu_\infty)=0$.

Now fix $T>0$. From Proposition~\ref{prop:endpoint_MS}, along every curve of maximal slope $\boldsymbol{\mu}\in \cA_{T,0}(\mu)$,
\begin{equation}\label{eq:perT_argmin_EGT}
\mu_T\in\arg\min_{\nu}\Big\{L(\nu)+\mathcal E_G^T(\mu,\nu)\Big\}.
\end{equation}
Fix any $\bar\mu\in\arg\min L$. From \eqref{eq:perT_argmin_EGT} we have, for every $T>0$,
\begin{equation}\label{eq:compare_bar_mu}
L(\mu_T)+\mathcal E_G^T(\mu,\mu_T)
\le
L(\bar\mu)+\mathcal E_G^T(\mu,\bar\mu).
\end{equation}
Since $L(\mu_T)\ge L(\bar\mu)$, dropping the $L$ term leads to 
\begin{equation}\label{eq:compare_E_only}
\mathcal E_G^T(\mu,\mu_T)\le \mathcal E_G^T(\mu,\bar\mu),\qquad\forall \,T>0.
\end{equation}
Sending $T$ to infinity, from Lemma~\ref{lem:energy_conv} we have
$$
\mathcal E_G^T(\mu,\mu_\infty)\downarrow d_G(\mu,\mu_\infty),
\qquad
\mathcal E_G^T(\mu,\bar\mu)\downarrow d_G(\mu,\bar\mu).
$$
Moreover, for each fixed $T>0$, from Corollary~\ref{cor: cEG_lowersemicon}, the map $\nu\mapsto\mathcal E_G^T(\mu,\nu)$ is lower semicontinuous in $\cW_2$,
hence using $\mu_T\to\mu_\infty$ we have
$$
\mathcal E_G^T(\mu_,\mu_\infty)\le \liminf_{n\to\infty}\mathcal E_G^T(\mu,\mu_{T_n})
\qquad\text{for any }T_n\to\infty.
$$
Choosing $T_n=n$ and combining with \eqref{eq:compare_E_only} yields, for each fixed $T$,
$$
\mathcal E_G^T(\mu,\mu_\infty)
\le
\liminf_{n\to\infty}\mathcal E_G^T(\mu,\mu_n)
\le
\liminf_{n\to\infty}\mathcal E_G^T(\mu,\bar\mu)
=
d_G(\mu,\bar\mu),
$$
where the last equality uses  Lemma~\ref{lem:energy_conv} and $G(\bar\mu)=0$. 

Finally, letting $T\to \infty$ on the left hand side, we get
$$
d_G(\mu,\mu_\infty)\le d_G(\mu,\bar\mu).
$$
Since $\bar\mu\in\arg\min L$ is arbitrary, this proves~\eqref{eq:selection_claim}.
\end{proof}

\subsection{Implicit regularization under short-time horizon}
In this section, we show how to apply our framework to recover the classical results on the equivalence between early stopping and ridge penalty. We need a further assumption on $L$.

\begin{assumption}[Growth condition]
\label{ass:growth condition}
    There exist constants $A_0$ and $A_1$ such that, for all $\mu\in \cP_2(\Theta)$,
    $$
    L(\mu)\le A_0+A_1\,m_2(\mu),
    $$
    where $m_2(\mu):=\int_\Theta |\theta|^2\,\mu(\d\theta)$.
\end{assumption}

\begin{remark}
\label{rmk:L_growth_quad}
If $\tilde L$ is quadratic loss, then 
$L$ satisfies Assumption~\ref{ass:growth condition}. Indeed, one may take
$$
A_0:=\frac{2}{M}\sum_{k=1}^M |y_k|^2,
\qquad
A_1:=2S_0^2,
$$
where $S_0$ is the constant in Assumption~\ref{ass:basic_NN}. This follows from the fact that
for every $k\in\{1,\dots,M\}$, by the boundedness of $\sigma$,
$$
|f_\mu(x_k)|
=\Big|\int_\Theta w\,\sigma(\langle a,x_k\rangle+b)\,\mu(\d\theta)\Big|
\le \int_\Theta |w|\,|\sigma(\langle a,x_k\rangle+b)|\,\mu(\d\theta)
\le S_0\int_\Theta |w|\,\mu(\d\theta).
$$
From Cauchy--Schwarz,
$$
\int_\Theta |w|\,\mu(\d\theta)\le \Big(\int_\Theta |w|^2\,\mu(\d\theta)\Big)^{1/2}\le \Big(\int_\Theta |\theta|^2\,\mu(\d\theta)\Big)^{1/2}=\sqrt{m_2(\mu)}.
$$
Combining the previous displays yields
$$
|f_\mu(x_k)|^2 \le S_0^2\,m_2(\mu).
$$
Finally, 
$$
|f_\mu(x_k)-y_k|^2\le 2|f_\mu(x_k)|^2+2|y_k|^2
\le 2S_0^2\,m_2(\mu)+2|y_k|^2.
$$
Averaging over $k=1,\dots,M$ gives 
$$
L(\mu)=\frac1M\sum_{k=1}^M |f_\mu(x_k)-y_k|^2
\le 2S_0^2\,m_2(\mu)+\frac{2}{M}\sum_{k=1}^M |y_k|^2,
$$
which is the desired bound with the stated constants.
\end{remark}

We start with an observation on $\cE^T_G$.
\begin{lemma}
\label{lem:est_cE_W2}
For every $\mu,\nu\in \cP_2(\Theta)$,
    $$
    \cE_G^T(\mu,\nu)=\inf_{\eta\in \cA_{1,0}(\mu,\nu)} \int_0^1 \frac{1}{2T}|\eta'|^2(t)+\frac{T}{2}G^2(\eta_t)\,\d t.
    $$
    Hence $$
    \cE_G^T(\mu,\nu)\ge \frac{1}{2T}\cW_2^2(\mu,\nu).
    $$
\end{lemma}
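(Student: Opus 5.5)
The identity follows from a linear time rescaling, and the lower bound from dropping the nonnegative potential term and using the Benamou--Brenier characterization of $\cW_2$.

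\textbf{Step 1 (rescaling).} Given any admissible $\eta\in AC_2([0,T];\cP_2(\Theta))$ with $\eta_0=\mu$, $\eta_T=\nu$, define $\tilde\eta_s:=\eta_{Ts}$ for $s\in[0,1]$. Directly from the definition of the metric derivative, $|\tilde\eta'|(s)=T|\eta'|(Ts)$ for a.e.\ $s$. The substitution $t=Ts$ then gives
\begin{equation*}
\int_0^T\Big(\tfrac12|\eta'|^2(t)+\tfrac12G^2(\eta_t)\Big)\d t=\int_0^1\Big(\tfrac{1}{2T}|\tilde\eta'|^2(s)+\tfrac{T}{2}G^2(\tilde\eta_s)\Big)\d s .
\end{equation*}
The map $\eta\mapsto\tilde\eta$ is a bijection between $AC_2$ curves on $[0,T]$ and on $[0,1]$ with the same endpoints; its inverse is $\eta_t=\tilde\eta_{t/T}$. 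Moreover, $\tilde\eta$ lies in $\cA_{1,0}(\mu,\nu)$: if $\alpha$ is the minimal velocity of $\eta$ from Proposition~\ref{prop: minimal velocity}, then $T\alpha_{Ts}$ is a velocity for $\tilde\eta$ in the continuity equation, and conversely. Taking infima on both sides yields the claimed identity. The only point requiring care is this identification of the curve classes.

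\textbf{Step 2 (lower bound).} Since $G^2\ge0$, dropping the potential term gives
\begin{equation*}
\cE_G^T(\mu,\nu)\ge\frac{1}{2T}\inf_{\eta\in\cA_{1,0}(\mu,\nu)}\int_0^1|\eta'|^2(s)\,\d s .
\end{equation*}
For any such $\eta$, the definition of $AC^2$ (the metric derivative is an admissible $m$ in \eqref{eq: absolute continuity}) together with Cauchy--Schwarz gives
\begin{equation*}
\cW_2(\mu,\nu)\le\int_0^1|\eta'|(s)\,\d s\le\Big(\int_0^1|\eta'|^2(s)\,\d s\Big)^{1/2}.
\end{equation*}
Hence the infimum is at least $\cW_2^2(\mu,\nu)$. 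This inequality is in fact an equality by Benamou--Brenier, as recalled in Remark~\ref{rmk:reachable set}, but only the inequality is needed. This proves $\cE_G^T(\mu,\nu)\ge\frac{1}{2T}\cW_2^2(\mu,\nu)$.

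If $\nu$ is not reachable, both sides of the identity equal $+\infty$ and the inequality is trivial; by Remark~\ref{rmk:reachable set}, however, every $\nu\in\cP_2$ is reachable when $\eps=0$. I expect no real obstacle: the only technical point is the rescaling of metric derivatives, which is immediate from their definition as limits of difference quotients.
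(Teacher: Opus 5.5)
Your proposal is correct and follows the paper's route. The identity comes from the linear time change $\tilde\eta_s=\eta_{Ts}$ (the paper only says ``change of variables in time''). The lower bound is obtained exactly as in the proof of Theorem~\ref{thm: earlystoppingbound}: drop the nonnegative $\frac{T}{2}G^2$ term and use $\cW_2^2(\mu,\nu)\le\int_0^1|\eta'|^2(t)\,\d t$. Your matching of velocity fields via $T\alpha_{Ts}$, identifying $AC_2([0,T])$ curves with $\cA_{1,0}(\mu,\nu)$, supplies a detail the paper leaves implicit.
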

The proof follows directly via a change of variables in time. Now denote 
\begin{equation}
\label{eq: regularizedloss}
    \Phi_T(\nu):=L(\nu)+\frac{1}{2T}\cW_2(\mu,\nu)^2,
\end{equation}
where $\mu$ is the initial distribution. We will show that, when $T\ll 1$, along any gradient flow $\boldsymbol{\mu}$ of $L$ starting at $\mu$, $\mu_T$ is an $\eps(T)$-minimizer of $\Phi_T$.
\begin{lemma}
\label{lem: nu_T_est}
    Let $\nu_T\in \arg\min_{\nu\in \cP_2(\Theta)} \Phi_T(\nu)$, then 
    $$
m_2(\nu_T)\ \le\ 2m_2(\mu)\ +\ 4T\,L(\mu).
    $$
\end{lemma}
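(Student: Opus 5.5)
The plan is to compare the minimizer $\nu_T$ with the competitor $\nu=\mu$. This gives a bound on the transport cost $\cW_2^2(\mu,\nu_T)$, which we then turn into a second-moment bound through the triangle inequality for $\cW_2$ with respect to $\delta_0$.

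\emph{Step 1 (competitor comparison).} By minimality of $\nu_T$ for $\Phi_T$ in \eqref{eq: regularizedloss}, testing against $\nu=\mu$ gives
\[
L(\nu_T)+\frac{1}{2T}\cW_2^2(\mu,\nu_T)\le \Phi_T(\mu)=L(\mu).
\]
By Assumption~\ref{ass:basic_NN}(iii)--(iv), $\tilde L\ge 0$, so $L(\nu_T)\ge 0$. Hence
\[
\cW_2^2(\mu,\nu_T)\le 2T\,L(\mu).
\]

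\emph{Step 2 (moment bound).} Note that $m_2(\rho)=\cW_2^2(\rho,\delta_0)$ for every $\rho\in\cP_2(\Theta)$. The triangle inequality for $\cW_2$ together with $(a+b)^2\le 2a^2+2b^2$ yields
\[
m_2(\nu_T)=\cW_2^2(\nu_T,\delta_0)\le 2\cW_2^2(\nu_T,\mu)+2\cW_2^2(\mu,\delta_0)\le 4T\,L(\mu)+2m_2(\mu),
\]
which is the claimed estimate.

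\emph{Main obstacle.} The only delicate point is that $\nu_T$ is assumed to exist; the statement is conditional on this, so no existence argument is needed here. One might otherwise worry about deriving $m_2$ bounds from an optimal coupling, but the elementary inequality $|y|^2\le 2|x|^2+2|y-x|^2$, integrated against an optimal coupling of $(\mu,\nu_T)$, gives the same bound directly. Since the argument uses only $L\ge 0$, Assumption~\ref{ass:growth condition} is not needed for this lemma.
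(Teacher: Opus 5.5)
Your proof is correct and matches the paper's argument: test the minimality of $\nu_T$ against the competitor $\nu=\mu$ to get $\cW_2^2(\mu,\nu_T)\le 2T\,L(\mu)$, then apply $m_2(\nu_T)\le 2m_2(\mu)+2\cW_2^2(\mu,\nu_T)$. You also spell out two steps the paper leaves implicit, namely $L(\nu_T)\ge 0$ and $m_2(\rho)=\cW_2^2(\rho,\delta_0)$ combined with the triangle inequality.
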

The proof can be found in Appendix~\ref{apx:proofs}. 
\begin{theorem}
\label{thm: earlystoppingbound}
   There exists a constant $C>0$ such that, along every gradient flow of $L$ with initial distribution $\mu\in \cP_2(\Theta)$, for every $T>0$, 
   \begin{equation}
   \label{eq:early_stopping_est}
        \inf_\nu \Phi_T(\nu)\le\Phi_T(\mu_T)\ \le\ \inf_\nu \Phi_T(\nu)+CT\ r(T,\mu),
   \end{equation}
    where $r(T,\mu):=\big[1+m_2(\mu)+TL(\mu)\big]^2$.
\end{theorem}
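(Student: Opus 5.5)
The lower bound $\inf_\nu \Phi_T(\nu)\le\Phi_T(\mu_T)$ is trivial. For the upper bound we compare both $\Phi_T(\mu_T)$ and $\inf\Phi_T$ with the quantity $L(\mu_T)+\cE_G^T(\mu,\mu_T)$, which by Proposition~\ref{prop:endpoint_MS} (with $v_0=L$) equals $\inf_\nu\{L(\nu)+\cE_G^T(\mu,\nu)\}=L(\mu)$.

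\emph{Step 1: the upper side.} By Lemma~\ref{lem:est_cE_W2}, $\frac1{2T}\cW_2^2(\mu,\mu_T)\le \cE_G^T(\mu,\mu_T)$. Hence
\[
\Phi_T(\mu_T)\le L(\mu_T)+\cE_G^T(\mu,\mu_T)=\inf_\nu\{L(\nu)+\cE_G^T(\mu,\nu)\}\le L(\nu_T)+\cE_G^T(\mu,\nu_T),
\]
where $\nu_T$ is a minimizer of $\Phi_T$. (If no minimizer exists, take an approximate one; the moment bound of Lemma~\ref{lem: nu_T_est} should persist up to harmless constants.)

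\emph{Step 2: compare $\cE_G^T(\mu,\nu_T)$ with $\frac1{2T}\cW_2^2(\mu,\nu_T)$.} Use the time-$1$ representation in Lemma~\ref{lem:est_cE_W2} with $\eta$ the constant-speed $\cW_2$-geodesic from $\mu$ to $\nu_T$. This gives
\[
\cE_G^T(\mu,\nu_T)\le \frac1{2T}\cW_2^2(\mu,\nu_T)+\frac T2\int_0^1 G^2(\eta_t)\,\d t .
\]
Therefore
\[
\Phi_T(\mu_T)\le \Phi_T(\nu_T)+\frac T2\sup_{t\in[0,1]}G^2(\eta_t),
\]
and it remains to show $\sup_t G^2(\eta_t)\le C\,r(T,\mu)$.

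\emph{Step 3: bound $G^2$ along the geodesic.} This is the main obstacle. By Lemma~\ref{lem: G_property}, $G^2(\eta_t)\le C_G L(\eta_t)(1+m_2(\eta_t))$.

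For the second moment, convexity of $m_2$ along couplings gives $m_2(\eta_t)\le 2m_2(\mu)+2m_2(\nu_T)$. By Lemma~\ref{lem: nu_T_est} this is $\lesssim 1+m_2(\mu)+TL(\mu)$.

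For the loss, Assumption~\ref{ass:growth condition} gives $L(\eta_t)\le A_0+A_1m_2(\eta_t)\lesssim 1+m_2(\mu)+TL(\mu)$, with constants depending on $A_0,A_1$. Multiplying the two bounds yields $G^2(\eta_t)\le C[1+m_2(\mu)+TL(\mu)]^2=C\,r(T,\mu)$, which is exactly the claimed form of $r$.

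The delicate points are these. First, confirm that the geodesic lies in $\cA_{1,0}(\mu,\nu_T)$ with $|\eta'|=\cW_2(\mu,\nu_T)$. Second, check that the growth assumption (rather than a sharper bound of $L$ by $L(\mu)$) is what the constant $C$ is allowed to depend on. Should Proposition~\ref{prop:endpoint_MS} require $G$ to be the exact slope of $L$, invoke Theorem~\ref{thm:controlformulation} with $V=L$, so that $G=|\partial L|$.
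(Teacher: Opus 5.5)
Your proposal is correct and follows essentially the paper's route. You sandwich against $\inf_\nu\{L(\nu)+\cE_G^T(\mu,\nu)\}$, use the constant-speed geodesic from $\mu$ to $\nu_T$ as a competitor in the time-rescaled form of Lemma~\ref{lem:est_cE_W2}, and bound $G^2$ along it via Lemma~\ref{lem: G_property}, Assumption~\ref{ass:growth condition} and Lemma~\ref{lem: nu_T_est}. You also make explicit the step $\Phi_T(\mu_T)\le L(\mu_T)+\cE_G^T(\mu,\mu_T)=\inf_\nu\{L(\nu)+\cE_G^T(\mu,\nu)\}$ from Proposition~\ref{prop:endpoint_MS}, which the paper's proof leaves implicit.
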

\begin{proof}
    Since $G^2\ge 0$, for every $\eta\in \cA_{1,0}(\mu,\nu)$, 
    $$\int_0^1 \frac{1}{2T}|\eta'|^2(t)+\frac{T}{2}G^2(\eta_t)\,\d t\ge \int_0^1 \frac{1}{2T}|\eta'|^2(t)\d t\ge \frac{1}{2T} \cW_2^2(\mu,\nu).
    $$
    Taking the infimum over $\cA_{1,0}(\mu,\nu)$ on the left-hand-side and adding $L(\nu)$ on both sides yields
    $$
    L(\nu)+\cE_{G}^T(\mu,\nu)\ge L(\nu)+\frac{1}{2T}\cW_2^2(\mu,\nu)\ge \Phi_T(\nu),\qquad \forall\, \nu\in \cP_2(\Theta).
    $$
    Hence, we obtain
    $$
    \inf_{\nu} \Phi_T(\nu)\le \inf_{\nu}\{L(\nu)+\cE_G^T(\mu,\nu)\}.
    $$
    For every $\nu$, let $\boldsymbol{\mu}\in \cA_{1,0}(\mu,\nu)$ denote the constant-speed $\cW_2$-geodesic between $\mu$ and $\nu$. Then 
    $$
    \int_0^1\frac{1}{2T} |\mu'|^2(t)\d t=\frac{1}{2T}\cW_2^2(\mu,\nu),\qquad \cE_G^T(\mu,\nu)\le \frac{1}{2T}\cW_2^2(\mu,\nu)+\frac{T}{2}\int_0^1 G^2(\mu_t)\d t.
    $$
    Take $\nu=\nu_T\in \arg\min \Phi_T$ and let $\boldsymbol{\rho}\in \cA_{1,0}(\mu,\nu_T)$ be the corresponding constant speed geodesic. Then we have
    $$
    \inf_{\nu} \Phi_T(\nu)\le \inf_{\nu}\{L(\nu)+\cE_G^T(\mu,\nu)\}\le \inf_{\nu} \Phi_T(\nu)+\frac{T}{2}\int_0^1 G^2(\rho_t)\d t.
    $$
    From Lemma~\ref{lem: G_property} and Assumption~\ref{ass:growth condition}, 
    \begin{align}
    \label{eq: G_est_geo}
        \int_0^1 G(\rho_t)^2\,\d t &\le C_G\int_0^1 L(\rho_t)\big(1+m_2(\rho_t)\big)\,\d t
        \le C_G\int_0^1 (A_0+A_1m_2(\rho_t))(1+m_2(\rho_t))\d t.
    \end{align}
    Notice that the second moment $t\mapsto m_2(\rho_t)$  is convex along geodesics. Then, for every $t\in [0,1]$,
    $$
    m_2(\rho_t)\le (1-t)m_2(\mu)+t\,m_2(\nu_T)\le m_2(\mu)+m_2(\nu_T).
    $$
    Plugging this into~\eqref{eq: G_est_geo} and letting $C:=C_G\max\{A_0,A_1\}$, we obtain
    $$
\frac{T}{2}\int_0^1 G(\rho_t)^2\,dt
\le \frac{CT}{2}\,\left(1+m_2(\mu)+m_2(\nu_T)\right)^2.
    $$
    By Lemma~\ref{lem: nu_T_est}, we then have 
    $$
    \frac{T}{2}\int_0^1 G(\rho_t)^2\,dt
\le CT\big[1+m_2(\mu)+TL(\mu)\big]^2.
    $$
\end{proof}

\subsection{Stability interpretation of the kinetic energy}
\label{subsec:stability}

In the mean--field control formulation, the dynamic evolves according to
$$
\partial_t\mu_t+\nabla_\theta\!\cdot(\mu_t \alpha_t)=0,
  \qquad 
  \alpha_t=-\nabla_\theta\partial_\mu L(\mu_t),
$$
and the  kinetic energy
$$
\mathcal K(T)
  :=\frac12\int_0^T\|\alpha_t\|_{L^2(\mu_t)}^2\,\d t
$$
appears naturally in both the Benamou--Brenier formulation of the Wasserstein
metric and in the Energy--Dissipation Equality (EDE)
\(
L(\mu_0)-L(\mu_T)=\int_0^T\|\alpha_t\|_{L^2(\mu_t)}^2\,\d t.
\)
Beyond its geometric role, $\mathcal K(T)$ admits a direct \emph{stability interpretation} that connects optimization dynamics with generalization.

\paragraph{Algorithmic stability.}
For a training dataset $S=\{z_i\}_{i=1}^M$ and its variant
$S^{(i)}$ obtained by replacing $z_i$ with an independent copy,
let $\boldsymbol \mu^S$ and $\boldsymbol \mu^{S^{(i)}}$ denote the parameter--law trajectories
produced by the algorithm on $S$ and $S^{(i)}$, respectively.
Define the uniform stability at time $T$ by
$$
\mathrm{stab}(T)
  :=\sup_{i\le M,\,z}\;
    \big|
      \ell\big(f_{\mu_T^S}(x),y\big)
      -\ell\big(f_{\mu_T^{S^{(i)}}}(x),y\big)
    \big|.
$$
We also define the expected generalization gap at time $T$ by
$$
\mathrm{gen}(T)
  :=\E_S\Big[\cR(\mu_T^S)-\cR_S(\mu_T^S)\Big].
$$
By the classical stability theorem of \cite{BousquetElisseeff2002,HardtRechtSinger2016},
one has
$$
 |\mathrm{gen}(T)| \le \mathrm{stab}(T).
$$
\begin{remark}
    At a heuristic level, and under smoothness and Lipschitz assumptions,
one expects an estimate of the form
$$
 \mathrm{stab}(T)\le  C_{\rm gen}\cW_2\big(\mu_t^{S},\mu_t^{S^{(i)}}\big)
  \le
  \frac{ C_{\rm gen}C_{\mathrm{stab}}}{M}\int_0^t\|\alpha_s^{S}\|_{L^2(\mu_s^{S})}\,\d s,
$$
where $C_{\rm gen}$ and $C_{\mathrm{stab}}$ are two constants.
Therefore the stability, and hence the generalization gap, are controlled by the
\emph{path length} of the trajectory. The Cauchy--Schwarz inequality yields
$$
 \int_0^T \|\alpha_t\|_{L^2(\mu_t)}\,\d t
  \le
  \sqrt{T}\,
  \Big(\int_0^T \|\alpha_t\|_{L^2(\mu_t)}^2\,\d t\Big)^{1/2}
  =
  \sqrt{2T\,\mathcal K(T)}.
$$
Consequently, under suitable regularity assumptions,
$$
 |\mathrm{gen}(T)|
  \le
  \frac{C_{\mathrm{gen}}C_{\mathrm{stab}}}{M}\sqrt{2T\,\mathcal K(T)}.
$$
Hence the kinetic energy $\int_0^T\| \alpha_t\|^2_{L^2(\mu_t)}\,\d t$
quantifies the cumulative sensitivity of the training trajectory to data perturbations:
shorter and slower trajectories (that is, trajectories with smaller action)
correspond to more stable algorithms and smaller generalization gaps.
\end{remark}

\paragraph{Interpretation.}
In this view, the kinetic term plays two simultaneous roles:
\begin{itemize}
  \item[\textbullet] \emph{Optimization:} it measures the metric speed of the mean--field flow and
    appears in the energy--dissipation identity.
  \item[\textbullet] \emph{Generalization:} through stability bounds, it quantifies how much
    the solution depends on individual data points.
\end{itemize}
Minimizing $\mathcal K(T)$ therefore not only enforces smoother dynamics
(implicit regularization), but also enhances the algorithm's robustness to data perturbations.
This provides a rigorous link between the geometry of gradient flows in Wasserstein space and
the statistical notion of stability, and corresponds to the statement in \cite{HardtRechtSinger2016}:"\emph{If one can achieve low training error quickly on a nonconvex problem with stochastic gradient, our results guarantee that the resulting model generalizes well.}"

\subsection{Bounds on moments}
\begin{proposition}[Second–moment growth]\label{prop:momentcontrol}
Recall that $m_2(\mu):=\int_{\Theta}\|\theta\|^2\,\mu(\d\theta)$. 
Let $\boldsymbol{\mu}$ be the closed-loop flow generated by the feedback control 
$\alpha(\theta,\mu)$ with initial distribution $\mu\in \cP_2(E)$ from~\eqref{eq:continuity_eq}. Then, for almost every $T>0$,
\begin{equation}\label{eq:m2_gronwall}
m_2(\mu_T)
\le e^{T}\,m_2(\mu)+\int_0^T e^{T-t}\|\alpha_t\|^2_{L^2(\mu_t)}\,\d t.
\end{equation}
If $\boldsymbol{\mu}$ is a  gradient flow of $L$, then 
\begin{equation*}   
m_2(\mu_T)
\le e^{T}\,m_2(\mu)+\int_0^T e^{T-t}G(\mu_t)^2\,\d t
\le e^{T}\Big(m_2(\mu)+L(\mu)-L(\mu_T)\Big).
\end{equation*}
\end{proposition}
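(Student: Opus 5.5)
The plan is to differentiate the second moment along the continuity equation~\eqref{eq:continuity_eq} and close with Gr\"onwall. Testing~\eqref{eq:continuity_eq} against $\theta\mapsto\|\theta\|^2$ gives, for a.e.\ $t$,
\[
\frac{\d}{\d t}m_2(\mu_t)=\int_\Theta 2\,\theta\cdot\alpha_t(\theta)\,\mu_t(\d\theta).
\]
This test function is not compactly supported, so I would first use the truncated test functions $\psi_R(\theta)=\|\theta\|^2\chi(\theta/R)$, where $\chi$ is a smooth cutoff. Since $\boldsymbol\mu\in AC^2_{loc}$, we have $m_2(\mu_t)<\infty$, and since $\|\alpha_t\|_{L^2(\mu_t)}\in L^2_{loc}$, the integrals $\int_0^T\int_\Theta |\theta||\alpha_t|\,\d\mu_t\,\d t$ are finite. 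Letting $R\to\infty$ by dominated convergence then yields the integrated identity
\[
m_2(\mu_T)=m_2(\mu)+\int_0^T\int_\Theta 2\,\theta\cdot\alpha_t\,\d\mu_t\,\d t .
\]
The identity then holds for every $T$ by continuity of $t\mapsto m_2(\mu_t)$; the "almost every $T$" in the statement only reflects differentiability.

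Next I would apply Young's inequality, $2\theta\cdot\alpha\le\|\theta\|^2+\|\alpha\|^2$. This gives the differential inequality
\[
\frac{\d}{\d t}m_2(\mu_t)\le m_2(\mu_t)+\|\alpha_t\|_{L^2(\mu_t)}^2 .
\]
Multiplying by $e^{-t}$ and integrating over $[0,T]$ (Gr\"onwall in integrated form, which is legitimate because $t\mapsto m_2(\mu_t)$ is absolutely continuous) gives exactly~\eqref{eq:m2_gronwall}.

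For the gradient-flow case I would use the Energy Identity under (Cal), Proposition~\ref{prop:cal-energy} with $\phi=L$. It gives $\alpha_t=-\zeta_t$ with $\|\alpha_t\|_{L^2(\mu_t)}^2=|\partial L|(\mu_t)^2=G(\mu_t)^2$, the latter being the identification already used in the proof of Theorem~\ref{thm: nonvanishing_reg}. Substituting this yields the first inequality. For the second, I bound $e^{T-t}\le e^{T}$ for $t\in[0,T]$ and use the energy--dissipation equality $\int_0^T G(\mu_t)^2\,\d t=L(\mu)-L(\mu_T)$ from Proposition~\ref{prop:endpoint_MS}.

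The main obstacle is the justification of the moment identity, since $\|\theta\|^2$ is an unbounded test function and $\alpha$ is only known to be $L^2(\mu_t)$ in space. The truncation argument above handles this; alternatively one can invoke the superposition principle and differentiate $\|X_t\|^2$ along the characteristics. Everything else is routine.
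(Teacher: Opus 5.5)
Your proposal is correct and follows the paper's proof: test the continuity equation with $\|\theta\|^2$, apply Young's inequality $2\theta\cdot\alpha\le\|\theta\|^2+\|\alpha\|^2$, and close with Gr\"onwall; then substitute $\|\alpha_t\|_{L^2(\mu_t)}^2=G(\mu_t)^2$ and $\int_0^T G(\mu_t)^2\,\d t=L(\mu)-L(\mu_T)$. Your truncation argument justifying the unbounded test function is rigor the paper omits. The only difference is that you obtain the identification $\|\alpha_t\|_{L^2(\mu_t)}^2=G(\mu_t)^2$ through Proposition~\ref{prop:cal-energy}, which needs (Cal). The paper gets it more economically: the minimal velocity has norm $|\mu'|(t)$ by Proposition~\ref{prop: minimal velocity}, and $|\mu'|(t)=G(\mu_t)$ by the definition of a curve of maximal slope.
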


\begin{proof}
Taking the test function $\varphi(\theta)=|\theta|^2$ in the weak formulation of the continuity equation yields
$$
\frac{\d}{\d t}m_2(\mu_t)=2\int_\Theta \theta\cdot \alpha_t(\theta)\,\mu_t(\d\theta)\le m_2(\mu_t)+\int_\Theta |\alpha_t|^2\,\mu_t(\d\theta),
\qquad\text{for a.e.\ }t.
$$
By Gr\"onwall inequality, we obtain
$$
m_2(\mu_T)
\le e^{T}\,m_2(\mu)+\int_0^T e^{T-t}\Big(\int_\Theta |\alpha_t|^2\,\mu_t(\d\theta)\Big)\,\d t.
$$
Along the curves of maximal slope, with minimum control, we have for a.e. $t$,
$$
\int_\Theta |\alpha_t|^2\,\d\mu_t \;=\; |\mu'|^2(t)\;=\;G(\mu_t)^2.
$$
Hence,
$$
m_2(\mu_T)
\le e^{T}\,m_2(\mu)+\int_0^T e^{T-t}G(\mu_t)^2\,\d t
\le e^{T}\Big(m_2(\mu)+L(\mu)-L(\mu_T)\Big).
$$
\end{proof}

\subsection{Interpretations of implicit regularizations}
\label{sec:Interpretations}
The solution reached by gradient flow minimizes the original loss
subject to a dynamical or action cost, measuring the total movement
of the particle distribution in parameter space.

Moment bounds show that this action cost controls
the parameter complexity of the network.
In one sentence: \emph{implicit regularization} is the preference induced by
the training dynamics, not an explicit penalty that selects among many
interpolating solutions those with small algorithm--dependent complexity, such as 
minimum--norm in linear/NTK limits, maximum--margin in separable classification,
and small--movement/low--action  solutions in the
mean--field feature--learning regime.

\section{Numerical Results}
\label{sec:numerics}
In this section, we test several deterministic diagnostics suggested by the theory in Section~\ref{sec:4}. First, we examine whether the finite-width training dynamics exhibit the energy--dissipation and complexity-growth patterns predicted by the mean-field formulation. Second, we test a finite-width proxy of Theorem~\ref{thm: earlystoppingbound}, which states that for small $T>0$, with initial distribution $\mu $, the gradient-flow endpoint $\mu_T$ is an approximate minimizer of
$$
\Phi_T(\nu)=L(\nu)+\frac{1}{2T}\cW_2^2(\mu,\nu).
$$

\paragraph{Finite-width model and data.}
We work with the finite-width approximation of large neural network, where
$$
f_N(x)=\frac1N\sum_{i=1}^N w_i\,\sigma(a_i\cdot x+b_i),
\qquad \theta_i=(w_i,a_i,b_i)\in\Theta=\R\times\R^{d+1},
$$
so that the empirical parameter distribution is
$$
\mu_t^N=\frac1N\sum_{i=1}^N \delta_{\theta_i^t}.
$$
Since the analysis in Section~4 assumes bounded activations, in the numerics we use the clipped ReLU
$$
\sigma_M(z):=\min\{M,\max\{0,z\}\},
$$
with $M=5$, consistent with Remark~\ref{rem:bounded-approx-relu}. The data are generated from a network of the same form with additive Gaussian noise. Training is performed by deterministic full-batch gradient descent.

\paragraph{Recorded observables.}
Along the trajectory \(t\mapsto \mu_t^N\), we record the empirical loss \(L(\mu_t^N)\), the second moment
$$
m_2(\mu_t^N):=\frac1N\sum_{i=1}^N |\theta_i^t|^2,
$$
and the Barron-type surrogates
$$
B_1(\mu_t^N):=\frac1N\sum_{i=1}^N |w_i^t|,
\qquad
B_{1,1}(\mu_t^N):=\frac1N\sum_{i=1}^N |w_i^t|\,\|a_i^t\|.
$$
To compare with the energy--dissipation identity, we use the mean-field-scaled discrete kinetic action
$$
\sum_k \frac{1}{\Delta t}\sum_{i=1}^N |\theta_i^{k+1}-\theta_i^k|^2,
$$
which is the finite-width approximation of
$
\int_0^T \|\alpha_t\|_{L^2(\mu_t)}^2\,\d t.$ We also use the identity-coupling displacement proxy
$$
\widetilde \cW_{2,\mathrm{id}}^2(\mu_0^N,\mu_t^N)
:=\frac1N\sum_{i=1}^N |\theta_i^t-\theta_i^0|^2,
$$
and the associated short-time proxy
$$
\widetilde\Phi_t^{N,\mathrm{id}}
:=
L(\mu_t^N)+\frac{1}{2t}\widetilde \cW_{2,\mathrm{id}}^2(\mu_0^N,\mu_t^N),
\qquad t>0.
$$

\paragraph{Deterministic scaling diagnostics.}
Figure~\ref{fig:sec5_diag1} displays three deterministic diagnostics along a single full-batch run.
The left panel compares the loss drop \(L(\mu_0^N)-L(\mu_t^N)\) with the cumulative mean-field-scaled kinetic action. The observed parity is close to the diagonal, in agreement with the energy--dissipation interpretation of the gradient-flow dynamics. The middle panel plots the second-moment increment \(m_2(\mu_t^N)-m_2(\mu_0^N)\) against the scale \(\sqrt{t}\sqrt{L(\mu_0^N)-L(\mu_t^N)}\), and the right panel shows the analogous increments for \(B_1\) and \(B_{1,1}\). In this run, both moment growth and Barron-surrogate growth remain controlled and increase smoothly with the dissipated energy. We interpret this as numerical evidence that, in the deterministic mean-field regime, the loss decrease is accompanied by only moderate growth of parameter complexity, consistent with the implicit-regularization mechanism suggested by Proposition~\ref{prop:momentcontrol} and the discussion in Section~\ref{sec:Interpretations}.

\paragraph{Finite-width proxy for Theorem~\ref{thm: earlystoppingbound}.}
We also perform a direct finite-width proxy test of the short-time endpoint principle in Theorem~\ref{thm: earlystoppingbound}. For each small stopping time $T$, starting from the same initialization $\mu_0^N$, we compare two endpoints:
\begin{enumerate}
\item the endpoint \(\mu_T^{N,\mathrm{GF}}\) produced by mean-field-scaled full-batch gradient descent on \(L_N\);
\item an approximate minimizer of the empirical identity-coupling proxy
$$
\widetilde\Phi_T^{N,\mathrm{id}}(\nu)
:=
L_N(\nu)+\frac{1}{2T}\widetilde \cW_{2,\mathrm{id}}^2(\mu_0^N,\nu).
$$
\end{enumerate}
Figure~\ref{fig:sec5_diag2} compares the corresponding displacement proxies from initialization. For small $T$, the displacement of the gradient-flow endpoint and that of the proxy minimizer are of the same order and remain close. This is the finite-width analogue of the statement that the gradient-flow endpoint is approximately optimal for the regularized short-time variational problem. Since the experiment replaces the exact Wasserstein term by the identity-coupling proxy and computes the minimizer only approximately, we view it as supportive rather than definitive evidence for Theorem~\ref{thm: earlystoppingbound}. Nevertheless, it is consistent with the qualitative prediction that, for short stopping times, the endpoint selected by gradient descent is close to the optimizer of a loss-plus-movement functional.

\paragraph{Interpretation.}
Taken together, the numerical experiments support the following picture. In the deterministic mean-field regime, the gradient-flow trajectory satisfies an energy--dissipation balance and exhibits controlled growth of moments and Barron-type surrogates. In addition, for short stopping times, the endpoint generated by gradient descent is well approximated by the minimizer of a finite-width proxy of the regularized loss functional in Theorem~\ref{thm: earlystoppingbound}. These experiments do not constitute a direct computation of $\cE_G^T$, nor a full numerical verification of the continuum theorem, but they do support the variational interpretation of early stopping developed in Sections~\ref{sec:3} and~\ref{sec:4}.

\begin{figure}[t]
  \centering
  \includegraphics[width=\textwidth]{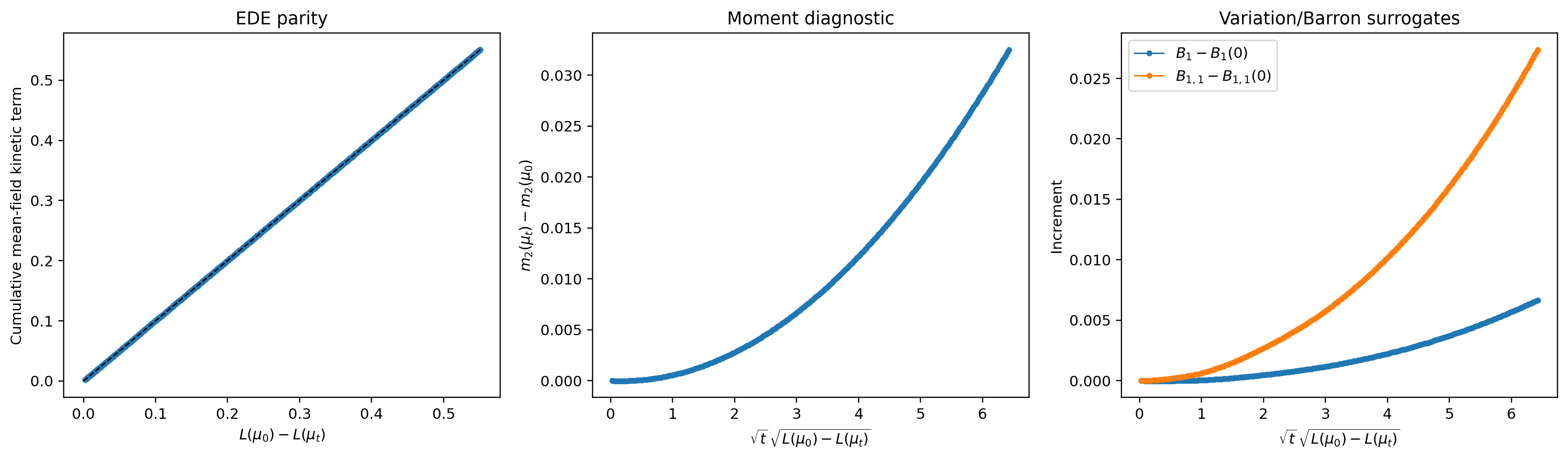}
  \vspace{-1ex}
  \caption{
Deterministic diagnostics for the mean-field finite-width training dynamics.
\textbf{Left:} energy--dissipation parity, comparing the loss drop \(L(\mu_0^N)-L(\mu_t^N)\) with the cumulative mean-field-scaled kinetic action.
\textbf{Middle:} second-moment increment \(m_2(\mu_t^N)-m_2(\mu_0^N)\) plotted against \(\sqrt{t}\sqrt{L(\mu_0^N)-L(\mu_t^N)}\).
\textbf{Right:} analogous increments for the Barron-type surrogates \(B_1\) and \(B_{1,1}\).
The three panels indicate that loss dissipation is accompanied by controlled growth of moments and variation-type quantities.}
\label{fig:sec5_diag1}
\end{figure}

\begin{figure}[t]
    \centering
    \includegraphics[width=0.6\linewidth]{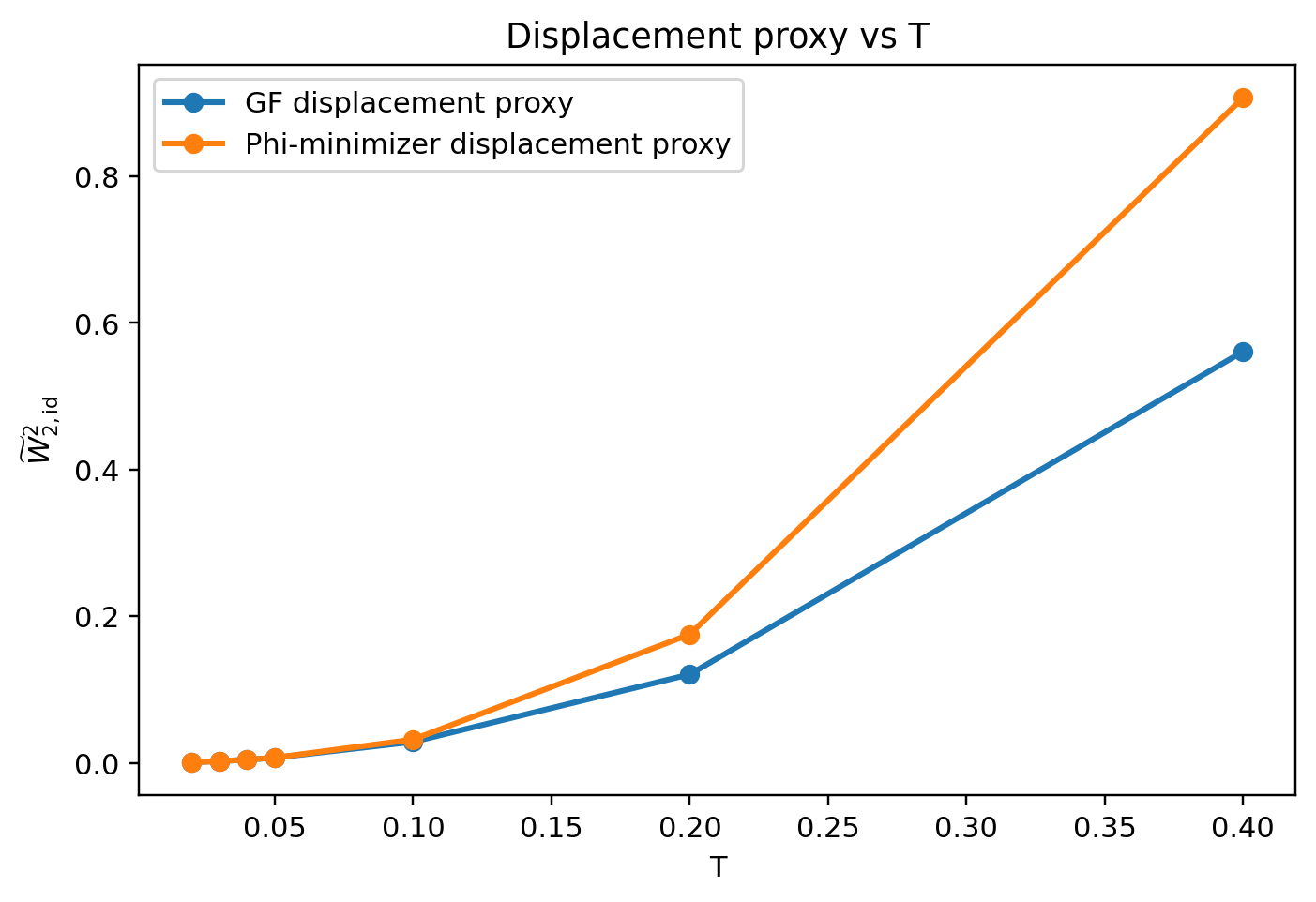}
    \caption{
Finite-width identity-coupling proxy for Theorem~\ref{thm: earlystoppingbound}.
For each small stopping time $T$, we compare the displacement proxy from initialization of the gradient-flow endpoint $\mu_T^{N,\mathrm{GF}}$ and of an approximate minimizer of the proxy functional
$
\widetilde\Phi_T^{N,\mathrm{id}}(\nu)
=
L_N(\nu)+\frac{1}{2T}\widetilde \cW_{2,\mathrm{id}}^2(\mu_0^N,\nu).
$
The two curves remain of the same order for small $T$, consistent with the short-time endpoint principle.}
\label{fig:sec5_diag2}
\end{figure}

\section{Conclusion, Limitations and Future Work}

We proposed a mathematical framework connecting gradient flows and McKean--Vlasov optimal control to study implicit regularization in overparametrized two-layer neural networks. By reformulating the mean-field training dynamics as a control problem, we obtained a variational principle that characterizes early stopping as a trajectory-selection mechanism: the stopped iterate approximately minimizes the loss penalized by a dynamical cost measuring the total movement of the parameter distribution. We further showed that, even under infinite training time, the gradient flow selects a minimizer closest to the initialization in a data- and loss-dependent pseudo-metric~$d_G$, if the gradient flow converges.

Our analysis has several limitations. It is restricted to two-layer networks in the mean-field limit, and extending the framework to deeper architectures remains a substantial open challenge. The implicit regularizer that emerges depends on the loss function and training data through the upper gradient~$G$, making it harder to interpret than classical explicit penalties such as weight decay. Moreover, while we provide non-asymptotic bounds relating the regularization to Wasserstein distances and moment growth, the tightness of these bounds in practical settings is unclear and warrants further investigation. More broadly, this work is primarily theoretical and does not claim to offer immediate practical guidance; we view it as providing a geometric and variational perspective that may prove valuable as the theoretical understanding of neural network training matures.

We see several natural directions for future work. It would be interesting to tighten the connection between~$d_G$ and function-space complexity measures such as Barron or variation norms, potentially yielding more interpretable characterizations of the inductive bias. As discussed in Section~\ref{sec:literature}, a particularly promising direction is to leverage such connections to obtain a more fine-grained understanding of implicit regularization in terms of feature learning, building on recent work relating parameter norms to properties of the learned function.
\bibliographystyle{plainnat}
\bibliography{ref}

\appendix
\section{A Counter Example}
\label{ex: gfnotminimizer}
Let $E:=\R^2$, and consider $(\cP_2(\R^2), \cW_2)$. Let $\eta\in C_c^\infty(\mathbb R)$ be a smooth cutoff with $\eta\equiv1$ on $[-1,1]$, $\eta\equiv0$ on $\mathbb R\setminus[-2,2]$, and $0\le\eta\le1$.
Define
$$
f(x,y):= y\,\eta(y)\in C_c^\infty(\mathbb R^2),\qquad 
S(\mu):=\!\int_{\mathbb R^2} f\,\d\mu,\qquad
\kappa(\mu):=\Big(\int_{\mathbb R^2} |\nabla f|^2\,\d\mu\Big)^{1/2}.
$$
Set a \emph{piecewise-linear} (kinked) scalar nonlinearity
$$
h(z):= a z + |z|,\qquad a>1,
$$
and the value functional
$$
v(\mu):=h(S(\mu))= a\,S(\mu) + |S(\mu)|.
$$
Note that $v$ is proper, l.s.c., Lipschitz on $\cW_2$-balls (since $S$ is Lipschitz via Benamou--Brenier and $f\in C_c^\infty$), but \emph{not} $C^1$ at $S=0$.

\medskip
\noindent\textbf{Relaxed slope and viscosity HJB.}
The metric subdifferential in the $S$-direction at $\mu$ is the interval
$$
\partial h(S(\mu))=\begin{cases}
\{a+1\}, & S(\mu)>0,\\
[a-1,a+1], & S(\mu)=0,\\
\{a-1\}, & S(\mu)<0,
\end{cases}
$$
so the \emph{relaxed slope} of $v$ at $\mu$ is
$$
|\partial^- v|(\mu)\;=\;\min_{s\in\partial h(S(\mu))}\, |s|\,\kappa(\mu)
\;=\;
\begin{cases}
(a+1)\,\kappa(\mu),& S(\mu)>0,\\
(a-1)\,\kappa(\mu),& S(\mu)=0,\\
(a-1)\,\kappa(\mu),& S(\mu)<0.
\end{cases}
$$
Define the running cost by the \emph{viscosity HJB identity} with the relaxed slope, 
$$
\ell(\mu):=v(\mu) + \tfrac12\,|\partial^- v|(\mu)^2.
$$
Then $v$ is a \emph{viscosity} solution of
$$
v(\mu)+\tfrac12\,|\partial^- v|(\mu)^2=\ell(\mu)
$$
on $\mathcal P_2$ (test functionals touch $S(\cdot)$ in the direction of minimal subgradient).

\medskip
\noindent\textbf{Initial condition at the kink.}
Pick $\mu_0\in\mathcal P_2$ such that $S(\mu_0)=0$ and $\kappa(\mu_0)>0$ (e.g., any compactly supported law with zero $f$-moment but nontrivial mass in $\{|\nabla f|>0\}$).
At $S=0$ the subgradient set is the whole interval $[a-1,a+1]$ with \emph{minimal} element $s_{\min}=a-1>0$ and other \emph{larger} elements  $s\in(s_{\min},a+1]$.

\medskip
\noindent\textbf{Two distinct gradient flows (curves of maximal slope).}
From~\cite{ambrosioGradientFlowsMetric2008},  with the relaxed slope, one may construct (locally in time) curves of maximal slope $\mu^{(s)}_t$ calibrated by a \emph{constant} subgradient choice $s\in[a-1,a+1]$ at $t=0$, i.e.
$$
-\frac{\d}{\d t}v(\mu^{(s)}_t)
=\frac12\,|\partial^- v|(\mu^{(s)}_t)^2+\frac12\,|\mu^{(s)}{}'|(t)^2,
\qquad 
|\partial^- v|(\mu^{(s)}_t)= s\,\kappa(\mu^{(s)}_t),
$$
and (by equality in Cauchy--Schwarz/Young for the \emph{relaxed} slope)
$$
|\mu^{(s)}{}'|(t)= s\,\kappa(\mu^{(s)}_t).
$$
In particular, for the two choices
$$
s_{\min}=a-1
\qquad\text{and}\qquad
s_{\max}=a+1,
$$
we obtain \emph{two} gradient flows $\mu^{(\min)}$ and $\mu^{(\max)}$ starting from the same $\mu_0$ and satisfying the curves of maximal slope definition  with the \emph{relaxed} slope.

\medskip
\noindent\textbf{Control costs and non-optimality of the larger-$s$ flow.}
Let $\alpha^{(s)}_t$ be any velocity field realizing the metric speed: 
$|\mu^{(s)}{}'|(t)^2=\int|\alpha^{(s)}_t|^2\,\d\mu^{(s)}_t$.
Along $\mu^{(s)}$ the discounted control cost reads
$$
J(\mu^{(s)},\alpha^{(s)})=\int_0^\infty e^{-t}\Big(\int|\alpha^{(s)}_t|^2\,\d\mu^{(s)}_t+\ell(\mu^{(s)}_t)\Big)\,\d t.
$$
Using the viscosity HJB (with $|\partial^- v|$) and the EDE  (again with $|\partial^- v|$),
\[
\int|\alpha^{(s)}_t|^2\,\d\mu^{(s)}_t
=\big|\mu^{(s)}{}'|(t)^2
=\big(|\partial^- v|(\mu^{(s)}_t)\big)^2
=s^2\,\kappa(\mu^{(s)}_t)^2,
\]
\[
\ell(\mu^{(s)}_t)=v(\mu^{(s)}_t)+\tfrac12\,|\partial^- v|(\mu^{(s)}_t)^2
=v(\mu^{(s)}_t)+\tfrac12 s^2\,\kappa(\mu^{(s)}_t)^2,
\]
and hence
$$
\int|\alpha^{(s)}_t|^2\,\d\mu^{(s)}_t+\ell(\mu^{(s)}_t)
=v(\mu^{(s)}_t)+\tfrac32\,s^2\,\kappa(\mu^{(s)}_t)^2.
$$
Therefore, for $s_{\max}>s_{\min}$ we have \emph{strictly larger} instantaneous kinetic and running cost:
$$
\int|\alpha^{(\max)}_t|^2\,\d\mu^{(\max)}_t+\ell(\mu^{(\max)}_t)
-\Big(\int|\alpha^{(\min)}_t|^2\,\d\mu^{(\min)}_t+\ell(\mu^{(\min)}_t)\Big)
=\tfrac32\,(s_{\max}^2-s_{\min}^2)\,\kappa(\mu_t)^2\;>\;0,
$$
at least for small $t$ while $\kappa(\mu_t)$ stays the same.
After multiplying with the discount factor $e^{-t}$ in time, and integrating over $t$ from $0$ to infinity, we conclude that
$$
J(\mu^{(\max)},\alpha^{(\max)})\;>\;J(\mu^{(\min)},\alpha^{(\min)}).
$$

\medskip
\noindent\textbf{Conclusion.}
Both $\mu^{(\min)}$ and $\mu^{(\max)}$ are \emph{gradient flows} (curves of maximal slope) for $v$ with respect to the \emph{relaxed} slope $|\partial^- v|$,
because $v$ is only used in \emph{viscosity} form and the kink allows multiple $s\in\partial h(0)$.
However, only the flow with the \emph{minimal} selector $s_{\min}=a-1$ minimizes the control cost.
The larger-selector flow $s_{\max}=a+1$ is a gradient flow but \emph{not} a minimizer of the control formulation.

This shows that, at viscosity regularity (with relaxed slopes and without a calibrating chain rule), a gradient flow of $v$ need not be optimal for the control problem.

\section{Proof of Theorem~\ref{thm:gamma}}
\label{apd:gamma}
\begin{lemma}
\label{lem:leps-lowerbound}
    There exist constants $C_0,\,C_1$ such that, for all $\varepsilon\ge0$ and $\mu\in\mathcal P_2(E)$,
\begin{equation}\label{eq:leps-lower}
\ell_\varepsilon(\mu)\ \ge\ -\frac{\varepsilon^2}{2}\big(C_0+C_1\,m_2(\mu)\big).
\end{equation}
\end{lemma}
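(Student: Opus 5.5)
We work directly from the definition \eqref{eq:ell} of the running cost, with $V=L$ the mean-field loss of Section~\ref{sec:4}:
$$
\ell_\varepsilon(\mu)=V(\mu)+\tfrac12\int\|\nabla_x\partial_\mu V(\mu)\|^2\d\mu-\tfrac{\varepsilon^2}{2}\int\Delta_x\partial_\mu V(\mu)\,\d\mu .
$$
The first two terms are nonnegative, since $V=L\ge 0$ by Assumption~\ref{ass:basic_NN}(iv). So it suffices to show
$$
\int\Delta_\theta\partial_\mu L(\mu)\,\d\mu\le C_0+C_1 m_2(\mu).
$$
For $\varepsilon=0$ the claim is immediate, so assume $\varepsilon>0$.

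\textbf{Computing the linear derivative.} For $L(\mu)=\frac1M\sum_k\tilde L(f_\mu(x_k)-y_k)$ we have
$$
\partial_\mu L(\mu)(\theta)=\frac1M\sum_k \tilde L'(f_\mu(x_k)-y_k)\,w\,\sigma(a\cdot x_k+b),\qquad \theta=(w,a,b).
$$
The $\theta$-Laplacian acts only on $\theta\mapsto w\sigma(a\cdot x_k+b)$. The $\partial_w^2$ term vanishes, and the $a,b$ directions give
$$
w\,\sigma''(a\cdot x_k+b)\,(\|x_k\|^2+1).
$$
This requires $\sigma\in C^2$ with $\sigma''$ bounded, say by $S_0$. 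That is an extra regularity assumption; it is implicitly needed anyway since $V\in\mathcal C^2(\mathcal P_2)$ is required in Theorem~\ref{thm:controlformulation}, and I would state it explicitly.

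\textbf{Bounding the integral.} We get
$$
\Big|\int\Delta\partial_\mu L\,\d\mu\Big|\le \frac{S_0(1+R^2)}{M}\sum_k|\tilde L'(f_\mu(x_k)-y_k)|\int|w|\,\d\mu .
$$
By the remark after Assumption~\ref{ass:basic_NN}, $|\tilde L'(r)|\le\sqrt{2\beta\tilde L(r)}$, and $\int|w|\,\d\mu\le\sqrt{m_2(\mu)}$. To bound $\tilde L(f_\mu(x_k)-y_k)$ we use the Lipschitz gradient: $|\tilde L'(r)|\le|\tilde L'(r_0)|+\beta|r-r_0|$, where $r_0$ is a minimizer with $\tilde L'(r_0)=0$. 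This gives $|\tilde L'(r)|\le\beta(|r|+|r_0|)$. Combining with $|f_\mu(x_k)|\le S_0\sqrt{m_2(\mu)}$ (as in Remark~\ref{rmk:L_growth_quad}),
$$
|\tilde L'(f_\mu(x_k)-y_k)|\le\beta\big(S_0\sqrt{m_2}+\max_k|y_k|+|r_0|\big).
$$
Multiplying by $\sqrt{m_2}$ and applying $\sqrt{m_2}\le\frac12(1+m_2)$ yields $C_0+C_1m_2(\mu)$. The constants depend only on $S_0,R,\beta,r_0$ and the data, not on $\varepsilon$ or $\mu$.

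The main obstacle is regularity rather than the estimate. Under Assumption~\ref{ass:basic_NN} alone $\sigma''$ may fail to exist (e.g.\ clipped ReLU), in which case $\ell_\varepsilon$ is not even classically defined for $\varepsilon>0$. I would either add the hypothesis $\|\sigma''\|_\infty\le S_0$, or interpret $\int\Delta\partial_\mu L\,\d\mu$ weakly and integrate by parts to $-\int\nabla_\theta\partial_\mu L\cdot\nabla\log\mu$. The latter route requires finite Fisher information and does not give a clean moment bound, so the first option is the one to pursue.
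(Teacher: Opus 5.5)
Your proof is correct and follows the paper's argument. You drop the nonnegative terms $V(\mu)+\tfrac12\int\|\nabla_x\partial_\mu V(\mu)\|^2\,\d\mu$ (using $V=L\ge 0$) and then bound $\int\Delta_x\partial_\mu V(\mu)\,\d\mu\le C(1+m_2(\mu))$. The paper only asserts this last bound, whereas you verify it explicitly for the mean-field loss and make visible the hypotheses its one-line argument uses implicitly, namely $V\ge 0$ and a bounded $\sigma''$.
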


\begin{proof}
    By the definition of $\ell_\eps$~\eqref{eq:ell},
    \begin{align*}
        \ell_\eps(\mu)&:=V(\mu)+\tfrac12\!\int_E\!\|\nabla_x\partial_\mu V(\mu)\|^2\,\d\mu\;-\;
 \tfrac{\varepsilon^2}{2}\!\int_E\!\Delta_x\partial_\mu V(\mu)\,\d\mu\\
 &\ge -\frac{\eps^2}{2}\int_{E} \Delta_{x}\partial_{\mu}V(\mu)\d\mu\ge -\frac{\eps^2}{2} C(1+m_2(\mu)).
    \end{align*}
\end{proof}

\begin{lemma}
\label{lem:l_0_lower_semi_cont}
    The functional $\ell_0$ is $\cW_2$--lower semicontinuous on $\{\mu:\ m_2(\mu)\le R\}$ for every $R>0$.
\end{lemma}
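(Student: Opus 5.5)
Since $\ell_0(\mu)=V(\mu)+\tfrac12\int_E\|\nabla_x\partial_\mu V(\mu)\|^2\,\d\mu$, I would split $\ell_0=V+\tfrac12 G_V^2$ with $G_V^2(\mu):=\int_E|\nabla_W V(\mu)|^2\,\d\mu$, and treat the two pieces separately on the sublevel set $K_R:=\{\mu: m_2(\mu)\le R\}$. The first piece needs no work: $V\in\cC_{quad}$ is continuous on $\cP_2(E)$ by definition. Lower semicontinuity of $\ell_0$ therefore reduces to lower semicontinuity of $\mu\mapsto \int_E|\nabla_x\partial_\mu V(\mu)(x)|^2\,\mu(\d x)$ along $\cW_2$-converging sequences $\mu_n\to\mu$ in $K_R$. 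I would follow the same route as the lower semicontinuity of $G^2$ in Lemma~\ref{lem: G_property}, which is exactly this statement for $V=L$.

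For the key step, fix $\mu_n\to\mu$ in $\cW_2$ with $m_2(\mu_n)\le R$ and write $g_n:=\nabla_x\partial_\mu V(\mu_n)$, $g:=\nabla_x\partial_\mu V(\mu)$. Because $V\in\cC^1(\cP_2)$ in the sense of the linear derivative, $\mu\mapsto\partial_\mu V(\mu)$ is continuous into $\cC^1(E)$. I read this as locally uniform convergence of $g_n\to g$ on compact subsets of $E$. Then for any nonnegative truncation level $k$ and any compact ball $B_r$ I would write
$$
\int_E |g_n|^2\,\d\mu_n\ \ge\ \int_{B_r}\min\{|g_n|^2,k\}\,\d\mu_n .
$$
The integrand converges uniformly on $B_r$ to $\min\{|g|^2,k\}$. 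Narrow convergence of $\mu_n$ together with a continuous cutoff $\chi_r\in C_c$, $0\le\chi_r\le 1_{B_r}$, then gives
$$
\liminf_n \int_E |g_n|^2\,\d\mu_n\ \ge\ \int_E \chi_r\min\{|g|^2,k\}\,\d\mu .
$$
Letting $k\to\infty$ and $r\to\infty$ by monotone convergence yields $\liminf_n G_V^2(\mu_n)\ge G_V^2(\mu)$. Adding the continuous term $V(\mu_n)\to V(\mu)$ concludes the proof. In the neural-network setting the derivative is explicit, $\partial_\mu L(\mu)(\theta)=\frac1M\sum_k\tilde L'(f_\mu(x_k)-y_k)\,w\sigma(a\cdot x_k+b)$, and the same argument goes through using continuity of $f_\mu(x_k)$ under $\cW_2$ (Assumption~\ref{ass:basic_NN}).

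The main obstacle is justifying the uniform-on-compacts convergence $g_n\to g$ from the assumed regularity of $V$. The mere continuity of $\partial_\mu V$ into $\cC^1(E)$ has to be interpreted with the topology of local uniform convergence of first derivatives. If only pointwise convergence is available, I would first try to replace the uniform argument by a Fatou-type lemma for varying measures: Skorokhod-represent $\mu_n$ by $X_n\to X$ a.s. This requires $g_n(X_n)\to g(X)$ a.s., which again needs joint continuity of $(\mu,x)\mapsto\nabla_x\partial_\mu V(\mu)(x)$, so I would state this joint continuity as the standing interpretation of $\cC^1(\cP_2)$. The restriction to $K_R$ is used only to guarantee tightness and to keep all sequences inside a set where such local bounds are uniform.
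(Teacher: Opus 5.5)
Your proposal is correct and takes essentially the paper's route. The paper disposes of this lemma in one line, citing the definition $\ell_0=V+\tfrac12\int_E|\nabla_x\partial_\mu V(\mu)|^2\,\d\mu$ and the regularity of $V$; your argument is the detailed version of that, mirroring Step~2 of the proof of Lemma~\ref{lem: G_property} with truncation and narrow convergence in place of an optimal coupling and Fatou, and it rests on the same local uniform convergence of $\nabla_x\partial_\mu V(\mu_n)$ on compacts that the paper also takes for granted there (note also that you never actually use $m_2\le R$, since $\cW_2$-convergence already gives tightness, so you obtain lower semicontinuity on all of $\cP_2(E)$).
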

This follows directly from the definition of $\ell_0$ and the regularity assumption on $L$.

\begin{lemma}[Equi-coercivity]
\label{lem:equi-coercive}
Fix $T>0$ and $\mu\in\cP_2(E)$.
For $\varepsilon\ge0$ define
$$
F_\varepsilon(\nu):=d_{T,\varepsilon}^2(\mu,\nu)+e^{-T}V(\nu).
$$
Then for every $C\in\R$ there exist $\varepsilon_*(T)\in(0,1]$ and $\bar C$ such that,
for all $\varepsilon\in(0,\varepsilon_*(T)]$,
$$
F_\varepsilon(\nu)\le C
\quad\Longrightarrow\quad
m_2(\nu)\le \bar C.
$$
Consequently, the family $(F_\varepsilon)_{\varepsilon\downarrow0}$ is equi-coercive on $(\cP_2(E),\cW_2)$.
\end{lemma}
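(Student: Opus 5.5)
The plan is to bound the second moment of $\nu$ by the kinetic energy of any admissible pair reaching $\nu$ at time $T$, and to use the lower bounds on $\ell_\varepsilon$ and $V$ to convert $F_\varepsilon(\nu)\le C$ into a bound on that kinetic energy.

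Fix $\nu$ with $F_\varepsilon(\nu)\le C$; by the definition of $d^2_{T,\varepsilon}$ (and $\nu\in\cT_{T,\varepsilon}(\mu)$, else $F_\varepsilon=+\infty$) pick an admissible pair $(\boldsymbol{\mu},\alpha)$ with $\mu_0=\mu$, $\mu_T=\nu$ and discounted cost at most $C+1-e^{-T}V(\nu)$. Since $V\in\cC_{quad}$ is bounded from below, say $V\ge -c_V$, we get
\[
\int_0^T e^{-t}\Big(\tfrac12\|\alpha_t\|^2_{L^2(\mu_t)}+\ell_\varepsilon(\mu_t)\Big)\d t\le C+1+c_V .
\]
Lemma~\ref{lem:leps-lowerbound} gives $\ell_\varepsilon(\mu_t)\ge -\tfrac{\varepsilon^2}{2}(C_0+C_1 m_2(\mu_t))$, hence, with $K(t):=\int_0^t\|\alpha_s\|^2_{L^2(\mu_s)}\d s$ and $e^{-t}\ge e^{-T}$,
\[
\tfrac12 e^{-T}K(T)\le C+1+c_V+\tfrac{\varepsilon^2}{2}\int_0^T\big(C_0+C_1 m_2(\mu_t)\big)\d t. \tag{$*$}
\]

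Next I control $m_2(\mu_t)$ in terms of $K$, exactly as in the proof of Proposition~\ref{prop:momentcontrol}, now including the diffusion. Testing \eqref{eq: controlled_fokker_planck_eps} with $|x|^2$ (after a standard truncation/localization) gives $\frac{\d}{\d t}m_2(\mu_t)\le m_2(\mu_t)+\|\alpha_t\|^2_{L^2(\mu_t)}+d\varepsilon^2$, so by Gr\"onwall, for $t\le T$ and $\varepsilon\le1$,
\[
m_2(\mu_t)\le e^{T}\big(m_2(\mu)+K(T)+dT\big).
\]
The step where the argument actually closes is absorption. Plugging the Gr\"onwall bound into $(*)$ gives
\[
\tfrac12 e^{-T}K(T)\le C'+\tfrac{\varepsilon^2}{2}C_1Te^{T}K(T),
\]
with $C'$ depending only on $C,T,\mu,d,c_V,C_0,C_1$. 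Choose $\varepsilon_*(T)\le1$ so small that $\varepsilon_*^2C_1Te^{T}\le \tfrac12 e^{-T}$. Then for $\varepsilon\le\varepsilon_*$ we have $K(T)\le 4e^{T}C'$, and therefore $m_2(\nu)=m_2(\mu_T)\le e^T(m_2(\mu)+4e^TC'+dT)=:\bar C$. The case $\varepsilon=0$ is included trivially, since the negative term vanishes.

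For equi-coercivity, sublevel sets $\{F_\varepsilon\le C\}$ are contained in a fixed $m_2$-ball, which is tight, hence relatively compact in the weak topology. This is the usual form of coercivity needed for the $\Gamma$-convergence argument. If $\cW_2$-relative compactness is required, I would upgrade it by running the same argument with a slightly higher moment (testing with $|x|^{2+\delta}$) or a superlinear function of $|x|^2$ via de la Vall\'ee-Poussin, using the same absorption. I expect this upgrade, together with justifying the moment identity for merely $L^2$ controls, to be the main technical obstacle; the core estimate is the absorption step above.
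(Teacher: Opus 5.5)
Your moment estimate follows the paper's argument: a near-optimal admissible pair, the lower bound of Lemma~\ref{lem:leps-lowerbound}, the It\^o/Gr\"onwall inequality $\frac{\d}{\d t}m_2(\mu_t)\le m_2(\mu_t)+\|\alpha_t\|^2_{L^2(\mu_t)}+d\varepsilon^2$, and absorption for $\varepsilon\le\varepsilon_*(T)$. Absorbing into the kinetic energy $K(T)$ instead of into $\sup_t m_2(\mu_t)$ makes no difference. Your use of $V\ge -c_V$ is more careful than the paper's step $d^2_{T,\varepsilon}(\mu,\nu)+1\le C+1$, which tacitly assumes $V\ge 0$. So the implication $F_\varepsilon(\nu)\le C\Rightarrow m_2(\nu)\le\bar C$ is fine.

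The gap is in the final ``consequently''. You are right that a uniform $m_2$-bound only gives tightness, i.e.\ narrow relative compactness (and $\cW_p$-compactness for $p<2$). The paper simply asserts $\cW_2$-precompactness, which does not follow. But your proposed upgrade cannot work. Testing with $\phi=|x|^{2+\delta}$, or with $\phi=\Psi(|x|^2)$ for $\Psi$ convex superlinear, produces the transport term $\int\nabla\phi\cdot\alpha_t\,\d\mu_t$. With only $L^2(\mu_t)$-control of $\alpha_t$ you can bound it only by $\|\nabla\phi\|_{L^2(\mu_t)}\|\alpha_t\|_{L^2(\mu_t)}$. Here $|\nabla\phi|^2\sim|x|^{2+2\delta}$, respectively $|\nabla\phi|^2=4s\Psi'(s)^2$ with $s=|x|^2$, and convexity gives $s\Psi'(s)^2\ge(\Psi(s)-\Psi(0))^2/s\gg\Psi(s)$. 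So the estimate never closes.

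No other argument can close it either, because the $\cW_2$ claim is false in general. Take $\varepsilon=0$, $\mu=\delta_0$, $V=m_2\in\cC^1\cap\cC_{\mathrm{quad}}$ (so $\ell_0=3m_2$), and $\nu_n=(1-\frac1n)\delta_0+\frac1n\delta_{\sqrt n\,e_1}$. Along the constant-speed geodesic from $\delta_0$ to $\nu_n$ on $[0,T]$ one has $\|\alpha_t\|^2_{L^2(\mu_t)}=T^{-2}$ and $m_2(\mu_t)=(t/T)^2$. Hence $F_0(\nu_n)\le\frac1{2T}+T+1$ for all $n$. Yet $\nu_n\to\delta_0$ narrowly with $m_2(\nu_n)=1$, so no subsequence converges in $\cW_2$. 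Heat-smoothing these curves as in Step~2 of the proof of Theorem~\ref{thm:gamma} should give the same phenomenon for each fixed $\varepsilon>0$.

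What your estimate actually yields is equi-coercivity in the narrow topology. The $\Gamma$-convergence must then be carried out in that topology, with lower semicontinuity of $V$ and $\ell_0$ under narrow convergence on $m_2$-balls checked separately. Alternatively, one needs additional assumptions that give uniform integrability of $|x|^2$ on sublevel sets, which $V\in\cC_{\mathrm{quad}}$ alone cannot provide.
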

\begin{proof}
Fix $C\in\R$, $\varepsilon\in(0,1]$, and $\nu\in\cP_2(E)$ with $F_\varepsilon(\nu)\le C$.
Pick $(\boldsymbol \mu,\alpha)\in A_{T,\varepsilon}(\mu,\nu)$ such that
$$
\int_0^T e^{-t}\Big(\frac12\int_E|\alpha_t|^2\,\d\mu_t+\ell_\varepsilon(\mu_t)\Big)\,\d t
\le d_{T,\varepsilon}^2(\mu,\nu)+1 \le C+1.
$$
Set $M(t):=m_2(\mu_t)$. By Ito's formula,
\begin{align*}
    \frac{\d}{\d t}M(t)=2\int_E x\cdot \alpha_t(x)\,\mu_t(\d x)+\varepsilon^2 d
    \le 2\|x\|_{L^2(\mu_t)}\|\alpha_t\|_{L^2(\mu_t)}+\varepsilon^2d.
\end{align*}
Hence, by Cauchy-Schwarz inequality,
$$
\frac{\d}{\d t}M(t)\le M(t)+\int_E|\alpha_t|^2\,\d\mu_t+\varepsilon^2 d.
$$
From Grönwall's inequality,
\begin{equation}
    \sup_{t\in[0,T]}M(t)\le e^T M(0) + e^T\int_0^T\int_E|\alpha_t|^2\,\d\mu_t\,\d t + \varepsilon^2 d e^T.
\label{eq:supM-gronwall}
\end{equation}

From the near-optimality inequality and the lower bound \eqref{eq:leps-lower},
\begin{align*}
\frac12\int_0^T e^{-t}\int_E|\alpha_t|^2\,\d\mu_t\,\d t
&\le C+1 - \int_0^T e^{-t}\ell_\varepsilon(\mu_t)\,\d t\\
&\le C+1 + \frac{\varepsilon^2}{2}\Big(C_0\int_0^T e^{-t}\d t + C_1\int_0^T e^{-t}M(t)\,\d t\Big)\\
&\le C+1 + \frac{\varepsilon^2}{2}\Big(C_0 + C_1\sup_{t\in[0,T]}M(t)\Big).
\end{align*}
Using $e^{-t}\ge e^{-T}$ on $[0,T]$, we have
\begin{equation}
    \int_0^T\int_E|\alpha_t|^2\,\d\mu_t\,\d t
\le e^T\int_0^T e^{-t}\int_E|\alpha_t|^2\,\d\mu_t\,\d t
\le 2e^T(C+1) + \varepsilon^2 e^T\Big(C_0 + C_1\sup_{t\in[0,T]}M(t)\Big).
\label{eq:kinetic-bound}
\end{equation}

Inserting \eqref{eq:kinetic-bound} into \eqref{eq:supM-gronwall},
$$
\sup_{t\in[0,T]}M(t)
\le e^TM(0) + 2e^{2T}(C+1) + \varepsilon^2 e^{2T}C_0
+ \varepsilon^2 e^{2T}C_1\sup_{t\in[0,T]}M(t)
+ \varepsilon^2 d e^T.
$$
Now choose $\varepsilon_*(T)\in(0,1]$ such that $\varepsilon_*^2 e^{2T}C_1\le \frac12$.
Then, for all $\varepsilon\in(0,\varepsilon_*(T)]$,
$$
\sup_{t\in[0,T]}M(t)
\le 2\Big(e^T m_2(\mu) + 2e^{2T}(C+1) + e^{2T}C_0 + d e^T\Big)
=: \bar C.
$$
In particular $m_2(\nu)=M(T)\le \bar C$, which is the desired uniform moment bound.
Finally, uniform second-moment bounds imply $\cW_2$-precompactness of sublevel sets, hence equi-coercivity.
\end{proof}
Now we present the proof of Theorem~\ref{thm:gamma}.
\begin{proof}[Proof of Theorem~\ref{thm:gamma}]
    By Lemma~\ref{lem:equi-coercive}, the family $(F_\varepsilon)_{\varepsilon\downarrow0}$ is equi-coercive
in $(\cP_2(E),\cW_2)$. It remains to prove the $\Gamma$--$\liminf$ and $\Gamma$--$\limsup$ inequalities.

\medskip
\noindent\textbf{Step 1 ($\Gamma$--$\liminf$ inequality).}
Let $\nu_\varepsilon\to\nu$ in $\cW_2$. If $\liminf_{\varepsilon\to0}F_\varepsilon(\nu_\varepsilon)=+\infty$
there is nothing to prove, so assume it is finite. Extract a sequence $\varepsilon_n\downarrow0$ such that
$$
\lim_{n\to\infty}F_{\varepsilon_n}(\nu_{\varepsilon_n})
=\liminf_{\varepsilon\to0}F_\varepsilon(\nu_\varepsilon)<\infty.
$$
For each $n$, choose $(\boldsymbol \mu^n,\alpha^n)\in A_{T,\varepsilon_n}(\mu,\nu_{\varepsilon_n})$ such that
\begin{equation}
    \int_0^T e^{-t}\Big(\frac12\int_E|\alpha_t^n|^2\,\d\mu_t^n+\ell_{\varepsilon_n}(\mu_t^n)\Big)\,\d t
\le d_{T,\varepsilon_n}^2(\mu,\nu_{\varepsilon_n})+\frac1n.
\label{eq:nearmin}
\end{equation}
By the uniform bound on $F_{\varepsilon_n}(\nu_{\varepsilon_n})$ and Lemma~\ref{lem:equi-coercive}, the curves $(\boldsymbol \mu^n)_{n\ge 1}$ have
uniformly bounded second moments and are equi-continuous in $\cW_2$. Hence, up to a subsequence,
\begin{equation}
    \boldsymbol\mu^n \to \boldsymbol\mu \quad\text{in } C([0,T];(\cP_2(E),\cW_2)),
\qquad
\mu_0=\mu,\ \mu_T=\nu.
\label{eq:muconv}
\end{equation}
Define the space--time flux measures $\mathbf m^n:=\mu_t^n\alpha_t^n\,\d t$ on $(0,T)\times E$.
The kinetic part in \eqref{eq:nearmin} yields a uniform bound on $\alpha^n$ in $L^2((0,T)\times E;\mu_t^n \d t)$, hence
$\{\mathbf m^n\}_{n\ge 1}$  is bounded in the dual of $C_c((0,T)\times E;\R^m)$.
Thus, up to a subsequence, $\mathbf m^n\rightharpoonup \mathbf m$ weakly as vector measures.

Passing to the limit in the weak formulation of the Fokker--Planck equations
$$
\partial_t\mu_t^n+\nabla\!\cdot(\mu_t^n\alpha_t^n)=\frac{\varepsilon_n^2}{2}\Delta\mu_t^n,
$$
we obtain, since $\varepsilon_n^2\to0$,
\begin{equation}
    \partial_t\mu_t+\nabla\!\cdot \mathbf m_t=0
\quad\text{in }\mathcal D'((0,T)\times E).
\label{eq:limitCE}
\end{equation}
Moreover, $\mathbf m\ll \mu_t\d t$, so there exists $\alpha\in L^2(\mu_t\d t)$ such that
$\mathbf m_t=\mu_t\alpha_t$; hence $(\boldsymbol\mu,\alpha)\in A_{T,0}(\mu,\nu)$.

By convexity and standard lower semicontinuity,
\begin{equation}
    \int_0^T\int_E|\alpha_t|^2\,\d\mu_t\,\d t
\le \liminf_{n\to\infty}\int_0^T\int_E|\alpha_t^n|^2\,\d\mu_t^n\,\d t.
\label{eq:lsc-kin}
\end{equation}

For the $\Gamma$--$\liminf$ we only need
\begin{equation}
   \liminf_{n\to\infty}\int_0^T e^{-t}\ell_{\varepsilon_n}(\mu_t^n)\,\d t
\ \ge\ \int_0^T e^{-t}\ell_{0}(\mu_t)\,\d t,
\label{eq:lsc-ell} 
\end{equation}
which follows from \eqref{eq:muconv}, the fact that $\ell_{\varepsilon}\to\ell_0$ pointwise,  $\ell_0$ is lower-semicontinuous from Lemma~\ref{lem:l_0_lower_semi_cont},
and $(\ell_\varepsilon)_{\varepsilon\ge0}$ are uniformly bounded from below on moment balls by Lemma~\ref{lem:leps-lowerbound},
allowing an application of Fatou's lemma after extracting a further subsequence if needed.

Combining \eqref{eq:nearmin}--\eqref{eq:lsc-ell} gives
\begin{align*}
d_{T,0}^2(\mu,\nu)
&\le \int_0^T e^{-t}\Big(\frac12\int_E|\alpha_t|^2\,\d\mu_t+\ell_0(\mu_t)\Big)\,\d t\\
&\le \liminf_{n\to\infty} \int_0^T e^{-t}\Big(\frac12\int_E|\alpha_t^n|^2\,\d\mu_t^n+\ell_{\varepsilon_n}(\mu_t^n)\Big)\,\d t
\le \liminf_{n\to\infty} d_{T,\varepsilon_n}^2(\mu,\nu_{\varepsilon_n}).
\end{align*}
Finally, since $V$ is $\cW_2$--l.s.c.\ and $\nu_{\varepsilon_n}\to\nu$,
$$
e^{-T}V(\nu)\le \liminf_{n\to\infty} e^{-T}V(\nu_{\varepsilon_n}).
$$
Therefore,
$$
F_0(\nu)\le \liminf_{n\to\infty}F_{\varepsilon_n}(\nu_{\varepsilon_n})
= \liminf_{\varepsilon\to0}F_\varepsilon(\nu_\varepsilon),
$$
which proves the $\Gamma$--$\liminf$ inequality.

\medskip
\noindent\textbf{Step 2 ($\Gamma$--$\limsup$ inequality).}
Fix $\nu\in\cP_2(E)$ and $\eta>0$. Choose $(\boldsymbol\mu,\alpha)\in A_{T,0}(\mu,\nu)$ such that
\begin{equation}
    \int_0^T e^{-t}\Big(\frac12\int_E|\alpha_t|^2\,\d\mu_t+\ell_0(\mu_t)\Big)\,\d t
\le d_{T,0}^2(\mu,\nu)+\eta.\label{eq:eta-opt}
\end{equation}

We construct admissible pairs for $\varepsilon>0$ with terminal marginals $\nu_\varepsilon\to\nu$.

Let $P_s$ denote the heat semigroup on $E$. 
For $\varepsilon>0$, define
$$
\mu_t^\varepsilon:=P_{\varepsilon^2 t}\mu_t,\qquad
\mathbf m_t^\varepsilon:=P_{\varepsilon^2 t}(\mu_t\alpha_t),\qquad
\nu_\varepsilon:=\mu_T^\varepsilon=P_{\varepsilon^2 T}\nu.
$$
Then $\nu_\varepsilon\to\nu$ in $\cW_2$ as $\varepsilon\to0$.
A standard commutation identity yields
$$
\partial_t\mu_t^\varepsilon + \nabla\!\cdot \mathbf m_t^\varepsilon
=\frac{\varepsilon^2}{2}\Delta\mu_t^\varepsilon
\quad\text{in }\mathcal D'((0,T)\times E),
\qquad
\mu_0^\varepsilon=\mu,\ \mu_T^\varepsilon=\nu_\varepsilon,
$$
so $(\boldsymbol\mu^\varepsilon,\alpha^\varepsilon)\in A_{T,\varepsilon}(\mu,\nu_\varepsilon)$, where
$\mathbf m_t^\varepsilon=\mu_t^\varepsilon\alpha_t^\varepsilon$.

By Jensen's inequality,
for a.e.\ $t$,
\begin{equation}
    \int_E |\alpha_t^\varepsilon|^2\,\d\mu_t^\varepsilon
\le \int_E |\alpha_t|^2\,\d\mu_t.\label{eq:jensen}
\end{equation}
Hence, using \eqref{eq:jensen} and the pointwise convergence $\ell_\varepsilon\to\ell_0$ on moment balls,
together with $\mu_t^\varepsilon\to\mu_t$ in $\cW_2$ uniformly in $t\in[0,T]$, we obtain
\begin{align*}
d_{T,\varepsilon}^2(\mu,\nu_\varepsilon)
&\le \int_0^T e^{-t}\Big(\frac12\int_E|\alpha_t^\varepsilon|^2\,\d\mu_t^\varepsilon+\ell_\varepsilon(\mu_t^\varepsilon)\Big)\,\d t
\le \int_0^T e^{-t}\Big(\frac12\int_E|\alpha_t|^2\,\d\mu_t+\ell_\varepsilon(\mu_t^\varepsilon)\Big)\,\d t\\
&\xrightarrow[\varepsilon\to0]{}\int_0^T e^{-t}\Big(\frac12\int_E|\alpha_t|^2\,\d\mu_t+\ell_0(\mu_t)\Big)\,\d t
\le d_{T,0}^2(\mu,\nu)+\eta,
\end{align*}
where the last inequality is \eqref{eq:eta-opt}. Moreover, $V(\nu_\varepsilon)\to V(\nu)$.
Therefore,
$$
\limsup_{\varepsilon\to0}F_\varepsilon(\nu_\varepsilon)
\le d_{T,0}^2(\mu,\nu)+\eta+e^{-T}V(\nu).
$$
Letting $\eta\downarrow0$ proves the $\Gamma$--$\limsup$ inequality.

\medskip
\noindent\textbf{Step 3 (Convergence of minimizers).}
By Lemma~\ref{lem:equi-coercive}, $(F_\varepsilon)_{\varepsilon\ge0}$ is equi-coercive, and by Steps~1--2 we have
that $F_\varepsilon\ \Gamma$--converges to $F_0$ in $(\cP_2(E),\cW_2)$.
The fundamental theorem of $\Gamma$--convergence then yields:
(i) $\min F_\varepsilon \to \min F_0$, and
(ii) any sequence of minimizers $\nu_{\varepsilon,T}\in\arg\min F_\varepsilon$ is precompact in $\cW_2$,
and every $\cW_2$--limit point is a minimizer of $F_0$.
This proves the claim on convergence of minimizers (along subsequences).
\end{proof}

\section{Postponed proofs}
\label{apx:proofs}
    \begin{proof}[Proof of Proposition~\ref{prop:dg-basic}]
    
    For any admissible curve $\boldsymbol\mu\in\cA_{1,0}(\mu,\nu)$, the integrand
$|\mu'|(t)G(\mu_t)$ is nonnegative a.e., hence the integral is nonegative and thus $d_G(\mu,\nu)\ge0$.

If $\nu=\mu$, the constant curve $\mu_t\equiv \mu$ belongs to $\cA_{1,0}(\mu,\mu)$ and satisfies
$|\mu'|(t)=0$ a.e., so the integral equals $0$. Hence $d_G(\mu,\mu)=0$.

Fix $\mu,\nu$ and let $\boldsymbol\mu=(\mu_t)_{t\in[0,1]}\in\cA_{1,0}(\mu,\nu)$.
Define the time-reversed curve $\bar\mu_t:=\mu_{1-t}$.
Then $\bar\mu\in\cA_{1,0}(\nu,\mu)$.
Moreover, the metric derivative is invariant under time reversal:
$|\bar\mu'|(t)=|\mu'|(1-t)$ for a.e.\ $t\in[0,1]$.
Therefore, by the change of variables $s=1-t$,
\begin{align*}
\int_0^1 |\bar\mu'|(t)\,G(\bar\mu_t)\,dt
&=\int_0^1 |\mu'|(1-t)\,G(\mu_{1-t})\,dt
=\int_0^1 |\mu'|(s)\,G(\mu_s)\,ds.
\end{align*}
Taking the infimum over all $\boldsymbol\mu\in\cA_{1,0}(\mu,\nu)$ yields
$d_G(\nu,\mu)\le d_G(\mu,\nu)$.
By symmetry of the roles of $\mu$ and $\nu$, we also get $d_G(\mu,\nu)\le d_G(\nu,\mu)$, hence equality.

For the triangle inequality, fix $\mu^0,\mu^1,\mu^2$ and let $\delta>0$.
Choose curves $\boldsymbol\mu^{01}\in\cA_{1,0}(\mu^0,\mu^1)$ and
$\boldsymbol\mu^{12}\in\cA_{1,0}(\mu^1,\mu^2)$ such that
\begin{align}
\int_0^1 |{\mu^{01}}'|(t)\,G(\mu^{01}_t)\,dt
&\le d_G(\mu^0,\mu^1)+\delta,\label{eq:delta_min_01}\\
\int_0^1 |{\mu^{12}}'|(t)\,G(\mu^{12}_t)\,dt
&\le d_G(\mu^1,\mu^2)+\delta.\label{eq:delta_min_12}
\end{align}
Define the concatenated curve $\boldsymbol\mu^{02}=(\mu^{02}_t)_{t\in[0,1]}$ by 
$$
\mu^{02}_t:=
\begin{cases}
\mu^{01}_{2t}, & t\in[0,\tfrac12],\\
\mu^{12}_{2t-1}, & t\in[\tfrac12,1].
\end{cases}
$$
Then $\boldsymbol\mu^{02}\in\cA_{1,0}(\mu^0,\mu^2)$. By Lemma~\ref{lem:reparametrization}, for a.e.\ $t\in(0,\tfrac12)$, $|{\mu^{02}}'|(t)=2\,|{\mu^{01}}'|$, and similarly for a.e. $t\in(\tfrac12,1)$, $|{\mu^{02}}'|(t)=2\,|{\mu^{12}}'|(2t-1)$.
Hence, using the change of variables $s=2t$ on $[0,\tfrac12]$ and $s=2t-1$ on $[\tfrac12,1]$,
\begin{align*}
\int_0^1 |{\mu^{02}}'|(t)\,G(\mu^{02}_t)\,dt
&=\int_0^{1/2} 2|{\mu^{01}}'|(2t)\,G(\mu^{01}_{2t})\,dt
  +\int_{1/2}^{1} 2|{\mu^{12}}'|(2t-1)\,G(\mu^{12}_{2t-1})\,dt\\
&=\int_0^{1} |{\mu^{01}}'|(s)\,G(\mu^{01}_{s})\,ds
  +\int_{0}^{1} |{\mu^{12}}'|(s)\,G(\mu^{12}_{s})\,ds.
\end{align*}
Therefore,
$$
d_G(\mu^0,\mu^2)
\le \int_0^1 |{\mu^{02}}'|(t)\,G(\mu^{02}_t)\,dt
\le d_G(\mu^0,\mu^1)+d_G(\mu^1,\mu^2)+2\delta.
$$
Since $\delta>0$ is arbitrary, the triangle inequality follows.

\medskip
For the third point, set $\fE_G(\mu,\nu):=\inf_{T>0} \cE_G^T(\mu,\nu)$. For every $\eta\in \cA_{T,0}(\mu,\nu)$,  we have
\begin{equation}
\label{eq: G_est_cauchy_schwarz}
    G(\eta_t)\,|\eta'|(t)\le \tfrac12|\eta'|^2(t)+\tfrac12G(\eta_t)^2,\quad \forall\, t\in [0,T].
\end{equation}
Now reparametrize $\eta$ to the unit interval by $\gamma_s:=\eta_{Ts}$, $s\in[0,1]$.
Then $\gamma\in \cA_{1,0}(\mu,\nu)$, and $|\gamma'|(s)=T|\eta'|(Ts)$ a.e.. Hence, combined with~\eqref{eq: G_est_cauchy_schwarz} gives
\begin{equation*}
    \int_0^1 G(\gamma_s)\,|\gamma'|(s)\,\d s
=
\int_0^T G(\eta_t)\,|\eta'|(t)\,\d t\le
\int_0^T\Big(\tfrac12|\eta'|^2+\tfrac12G(\eta)^2\Big)\,\d t.
\end{equation*}
Taking the infimum over all such $\eta$ and then over all $T>0$ yields
$d_G(\mu,\nu)\le \mathfrak E_G(\mu,\nu)$.

\medskip
For the other direction, fix $\varepsilon>0$ and take $\gamma\in \cA_{1,0}(\mu,\nu)$ such that
\begin{equation}\label{eq:eps_min_length}
\int_0^1 G(\gamma_s)\,|\gamma'|(s)\,\d s\le d_G(\mu,\nu)+\varepsilon.
\end{equation}
Define the absolutely continuous, nondecreasing map
$$
\theta(s):=\int_0^s \frac{|\gamma'|(r)}{G(\gamma_r)}\,\d r,
$$
with the convention $\frac{|\gamma'|}{G}=0$ on $\{|\gamma'|=0\}$ and $\frac{|\gamma'|}{G}=+\infty$ on
$\{|\gamma'|>0,\ G=0\}$. Let $S:=\theta(1)\in[0,\infty]$.
\\
If $S=\infty$, for every $n\in \N$, define
$$\theta_n(s):=\int_0^s \frac{|\gamma'|(r)}{\max\{G(\gamma_r),\,1/n\}}\,\d r,\qquad s\in[0,1].
$$
Then $\theta_n$ is absolutely continuous, nondecreasing, and $S_n:=\theta_n(1)<\infty$.
Let $s_n:[0,S_n]\to[0,1]$ be an a.e.\ inverse of $\theta_n$ and define the reparametrized curve
$\eta^{(n)}_t:=\gamma_{s_n(t)},\, t\in[0,S_n]$, then $\eta^{(n)}\in \cA_{S_n,0}(\mu,\nu)$. Moreover, 
$$
|\eta^{(n)}|'(t)=|\gamma'|(s_n(t))\,s_n'(t)=\max\{G(\eta^{(n)}_t),\,1/n\}\quad\text{a.e.}
$$
Therefore, together with Lemma~\ref{lem:reparametrization},$$
\int_0^{S_n}\Big(\tfrac12|\eta^{(n)}|'(t)^2+\tfrac12G(\eta^{(n)}_t)^2\Big)\,\d t
\le
\int_0^{S_n} |\eta^{(n)}|'(t)\,G(\eta^{(n)}_t)\,\d t + \frac{S_n}{2n^2}=\int_0^1 G(\gamma_s)\,|\gamma'|(s)\,\d s+ \frac{S_n}{2n^2}.$$
Since $S_n\le n\int_0^1|\gamma'|(s)\,\d s$, we have $S_n/n^2\to0$, and thus
$$
\limsup_{n\to\infty}
\int_0^{S_n}\Big(\tfrac12|\eta^{(n)}|'(t)^2+\tfrac12G(\eta^{(n)}_t)^2\Big)\,\d t
\le \int_0^1 G(\gamma_s)\,|\gamma'|(s)\,\d s.
$$
This yields $\mathfrak E_G(\mu,\nu)\le d_G(\mu,\nu)$.

\end{proof}
\medskip
\begin{proof}[Proof of Lemma~\ref{lem: G_property}]
    Set 
    $$
    r_k=y_k-f_\mu(x_k),\\ \quad v_k(\theta)=\begin{pmatrix}
\sigma(z_k)\\ w\sigma'(z_k)x_k\\ w\sigma'(z_k)
\end{pmatrix},\quad z_k=a\cdot x_k+b.
    $$
    \textbf{Step 1: Growth Condition.}
    Applying Cauchy-Schwarz to $G^2$ yields
    $$
    G^2(\mu)\le \Big(\frac1M\sum_{k=1}^M (\tilde L'(r_k(\mu)))^2\Big)\cdot
\int_\Theta \frac1M\sum_{k=1}^M \|v_k(\theta)\|^2\,\mu(\d\theta).
    $$
    Now, by Assumption~\ref{ass:basic_NN},
    $$
    \|v_k(\theta)\|^2
\le S_0^2 + w^2 S_0^2(\|x_k\|^2+1)
\le S_0^2 (R^2+1) (1+ w^2) .
    $$ 
    Integration over $\theta$ against $\mu$ yields
    $$
G^2(\mu)
\le
2\beta\,L(\mu)\,(R^2+1)S_0^2\Big(1+\int_\Theta w^2\,\mu(d\theta)\Big)\le C_GL(\mu)(1+m_2(\mu)),
    $$
    where $C_G:=2\beta(R^2+1)S_0^2$.
    \medskip
    \par
    \noindent\textbf{Step 2: Lower semicontinuity.} Let $\mu_n\to \mu$ in $\cW_2$. Denoting $ g(\mu,\theta):=\nabla_\theta \partial_\mu L(\mu)(\theta)$, then $G^2(\mu)=\int_\Theta \|g(\mu,\theta)\|^2\,\mu(\d\theta)$. It is easy to check that, for every $R>0$,    $$\lim_{n\to\infty}\sup_{|\theta|\le R}\|g(\mu_n,\theta)-g(\mu,\theta)\|= 0.
    $$
    For every $n\in \N$, take an optimal coupling $\pi_n\in\Pi(\mu_n,\mu)$, and let $(\Theta_n,\Theta)\sim\pi_n$. Then $\Theta_n\to\Theta$ in $L^2$, and hence, along a subsequence,
    $$
    g(\mu_n,\Theta_n)\to g(\mu,\Theta)\qquad\text{a.s.}.
    $$
    By Fatou's lemma, we have
    $$
    \mathbb E\|g(\mu,\Theta)\|^2 \le \liminf_{n\to\infty}\mathbb E\|g(\mu_n,\Theta_n)\|^2.
    $$
    Since $\Theta\sim\mu, \Theta_n\sim\mu_n$, we conclude that
    $$
    G(\mu)^2 \le \liminf_{n\to\infty} G(\mu_n)^2.
    $$
\end{proof}

\begin{proof}[Proof of Corollary~\ref{cor: cEG_lowersemicon}]
Let $\nu_n\to\nu$ in $\cW_2$. If $\liminf_n \mathcal E_G^T(\mu,\nu_n)=+\infty$, there is nothing to prove. Otherwise,  pass to a subsequence (not relabeled) such that $$\sup_{n}\mathcal E_G^T(\mu,\nu_n)<\infty.$$
    For each $n$, pick $\eta^n\in \cA_{T,0}(\mu,\nu_n)$ such that
    $$
    \int_0^T\Big(\tfrac12|\eta^{n}{}'|^2(t)+\tfrac12G^2(\eta^n_t)\Big)\,\d t
\le \mathcal E_G^T(\mu,\nu_n)+\frac1n\le \sup_{n}\mathcal E_G^T(\mu,\nu_n)+1.
    $$
    In particular, the curves $(\eta^n)_{n\ge 1}$  are equicontinuous since
    $$
    \cW_2(\eta^n_t,\eta^n_s)\le \int_s^t |\eta^{n}{}'|(r)\,\d r
\le |t-s|^{1/2}\Big(\int_0^T|\eta^{n}{}'|^2(r)\d r\Big)^{1/2},\quad \forall\, n\ge1.
    $$
    Moreover, since $\eta^n_0=\mu$, we have
    $$
    \cW_2(\mu,\eta^n_t)\le \int_0^t |\eta^{n}{}'|(r)\,\d r \le \sqrt{T}\Big(\int_0^T|\eta^{n}{}'|^2(r)\,\d r\Big)^{1/2}\le\sqrt{T}(\sup_{n}\mathcal E_G^T(\mu,\nu_n)+1).
    $$
    Hence $\{\eta^n_t: n\in\mathbb N,\ t\in[0,T]\}$ is tight with uniformly bounded second moments, thus it is relatively compact in $(\cP_2(\Theta),\cW_2)$.  By Arzelà–Ascoli, there exist a subsequence (not relabeled) and a limit curve $\eta\in \cA_{T,0}(\mu,\nu)$, such that $$
    \sup_{t\in[0,T]} \cW_2(\eta^n_t,\eta_t)\to 0.$$
    From Lemma~\ref{lem: G_property}, 
    $$
    G^2(\eta_t) \le \liminf_{n\to\infty} G^2(\eta^n_t)\qquad\text{for all }\,t\in[0,T],
    $$
    and by Fatou's lemma,
    $$
    \int_0^T G^2(\eta_t)\,\d t \le \liminf_{n\to\infty}\int_0^T G^2(\eta^n_t)\d t.
    $$
    Combined with the fact that $$
    \int_0^T |\eta'|^2(t)\,\d t \le \liminf_{n\to\infty}\int_0^T |\eta^{n}{}'|^2(t)\,\d t,
    $$
    this gives
    $$
    \mathcal E_G^T(\mu,\nu)
\le
\int_0^T\Big(\tfrac12|\eta'|^2(t)+\tfrac12G^2(\eta_t)\Big)\,\d t
\le
\liminf_{n\to\infty}\ \mathcal E_G^T(\mu,\nu_n).
    $$
\end{proof}

\begin{proof}[Proof of Lemma~\ref{lem: nu_T_est}]
    Let $\nu_T$ be the minimizer of $\Phi_T$. Taking the constant curve $\boldsymbol{\eta}\equiv\mu$, we obtain
    $$
    \Phi_T(\nu_T)=L(\nu_T)+\frac{1}{2T}\cW_2(\mu,\nu_T)^2\ \le\ L(\mu)+\frac{1}{2T}\cW_2(\mu,\mu)^2\ \le\ L(\mu),
    $$
    hence
    $$
    \cW_2(\mu,\nu_T)^2 \le 2T\,L(\mu),
    $$
and
$$
m_2(\nu_T)
\ \le\
2m_2(\mu)+2\cW_2(\mu,\nu_T)^2
\ \le\
2m_2(\mu)\ +\ 4T\,L(\mu).
$$
\end{proof}
\end{document}